\def\blx@maxline{77}
\numberwithin{equation}{section}
\renewcommand{\Re}{\mathop{\mathrm{Re}}}
\renewcommand{\Im}{\mathop{\mathrm{Im}}}
\newcommand{\Det}{\mathop{\mathrm{det}}\limits}
\newcommand{\lozvdot}[2]
{
	\begin{scope}[shift={#1}]
		\draw [thick,fill=#2] (0,0) -- (.5,\rt) -- (1,0) -- (.5,-\rt) -- cycle;
		\draw[fill] (.5,0) circle (4pt);
	\end{scope}
}
\newcommand{\lozv}[2]
{
	\begin{scope}[shift={#1}]
		\draw [thick,fill=#2] (0,0) -- (.5,\rt) -- (1,0) -- (.5,-\rt) -- cycle;
	\end{scope}
}
\newcommand{\lozl}[2]
{
	\begin{scope}[shift={#1}]
		\draw [thick,fill=#2] (0,0) -- (-.5,\rt) -- (.5,\rt) -- (1,0) -- cycle;
	\end{scope}
}
\newcommand{\lozr}[2]
{
	\begin{scope}[shift={#1}]
		\draw [thick,fill=#2] (0,0) -- (.5,\rt) -- (1.5,\rt) -- (1,0) -- cycle;
	\end{scope}
}
\newtheorem{proposition}{Proposition}[section]
\newtheorem{lemma}[proposition]{Lemma}
\newtheorem{theorem}[proposition]{Theorem}
\newtheorem{conjecture}[proposition]{Conjecture}
\theoremstyle{definition}
\newtheorem{assumption}{Assumption}
\newtheorem{definition}[proposition]{Definition}
\newtheorem{remark}[proposition]{Remark}
\begin{document}

\title[GUE corners limit of $q$-distributed lozenge tilings]
{GUE corners limit of $q$-distributed lozenge tilings}
\author[S. Mkrtchyan]{Sevak Mkrtchyan}
\address{S. Mkrtchyan, Department of Mathematics, 
University of Rochester,
500 Joseph C. Wilson Blvd., Rochester, NY 14627}
\email{sevak.mkrtchyan@rochester.edu}

\author[L. Petrov]{Leonid Petrov}
\address{L. Petrov, Department of Mathematics, University of Virginia, 
141 Cabell Drive, Kerchof Hall,
P.O. Box 400137,
Charlottesville, VA 22904, USA,
\newline{}and Institute for Information Transmission Problems, Bolshoy Karetny per. 19, Moscow, 127994, Russia}
\email{lenia.petrov@gmail.com}
\date{}
\begin{abstract}
	We study asymptotics of $q$-distributed random lozenge tilings of sawtooth
	domains (equivalently, of random interlacing integer arrays with fixed top
	row). 
	Under the distribution we consider each tiling is weighted proportionally to
	$q^{\mathsf{vol}}$, where $\mathsf{vol}$ is the volume under the
	corresponding 3D stepped surface.
	We prove the following Interlacing Central Limit Theorem: as $q\rightarrow1$,
	the domain gets large, and the fixed top row approximates a given nonrandom
	profile, the vertical lozenges are distributed as the eigenvalues of a GUE
	random matrix and of its successive principal corners.
	Our results extend the GUE corners asymptotics for tilings of bounded
	polygonal domains previously known in the uniform (i.e., $q=1$) case.
	Even though $q$ goes to $1$, the presence of the $q$-weighting affects
	non-universal constants in our Central Limit Theorem.
\end{abstract}
\maketitle

\setcounter{tocdepth}{1}
\tableofcontents
\setcounter{tocdepth}{3}

\section{Introduction and main results}
\label{sec:intro}

\subsection{$q$-distributed lozenge tilings}

We begin with defining our main object, a probability distribution
$P_{q,\nu}^{N}$ on interlacing integer arrays
of depth $N$ with fixed top
row $\nu=(\nu_1\ge \ldots \ge \nu_N)$, $\nu_i\in\mathbb{Z}$.
Here $q>0$ is a parameter that will eventually tend to $1$.
By an interlacing array of depth $N$
we mean a collection 
\begin{equation*}
	\boldsymbol\lambda=\bigl\{ \lambda^{k}_j\in\mathbb{Z}\colon k=1,\ldots,N,\; j=1,\ldots,k  \bigr\}
\end{equation*}
satisfying the interlacing constraints 
$\lambda^{k}_j\le \lambda^{k-1}_{j-1}\le\lambda^{k}_{j-1}$
for all $k,j$.
Interlacing integer arrays originated as index sets of basis vectors in
irreducible representations of unitary groups, and consequently they are
sometimes referred to as Gelfand--Tsetlin schemes or patterns.
We will identify interlacing arrays with configurations of lozenges
of three types as shown in \Cref{fig:tiling_intro}.

\begin{figure}[htpb]
		\begin{tikzpicture}
			[scale=.45, thick]
			\def\rt{0.866025}
			\foreach \rl in
			{
				(-.5, 2*\rt),  (6.5, 2*\rt),  (12.5, 2*\rt),
				(4,   \rt),    (8,   \rt),    (11,   \rt), (12,\rt),
				(1.5, 0),      (2.5, 0),      (5.5,  0),      (10.5, 0), (11.5,0),
				(7,   -\rt),   (10,  -\rt), (11,-\rt),
				(4.5, -2*\rt), (8.5, -2*\rt), (9.5,  -2*\rt), (10.5,-2*\rt),
				(6,   -3*\rt), (7,   -3*\rt), (8,    -3*\rt), (9,    -3*\rt), (10,-3*\rt)
			}
			{
				\lozr{\rl}{red!20!white}
			}
			\foreach \ll in
			{
				(-1.5, 2*\rt),  (1.5, 2*\rt),  (2.5, 2*\rt),  (4.5, 2*\rt),  (5.5, 2*\rt),  (8.5, 2*\rt), (11.5,2*\rt),
				(-1,   \rt),    (0,   \rt),    (2,   \rt),    (3,   \rt),    (6,   \rt),    (7,   \rt),    (0, \rt),
				(-.5,  0),      (.5,  0),      (4.5, 0),      (7.5, 0),      (8.5, 0),
				(0,    -\rt),   (1,   -\rt),   (2,   -\rt),   (3,   -\rt),   (5,   -\rt),   (6,   -\rt),   (9, -\rt),
				(.5,   -2*\rt), (1.5, -2*\rt), (2.5, -2*\rt), (3.5, -2*\rt), (6.5, -2*\rt), (7.5, -2*\rt),
				(1,    -3*\rt), (2,   -3*\rt), (3,   -3*\rt), (4,   -3*\rt), (5,   -3*\rt)
			}
			{
				\lozl{\ll}{yellow!20!white}
			}
			\foreach \vl in
			{
				(-1,  3*\rt), (3,   3*\rt), (6,   3*\rt), (9,   3*\rt), (10,   3*\rt), (12, 3*\rt),
				(.5,  2*\rt), (3.5, 2*\rt), (7.5, 2*\rt), (9.5, 2*\rt), (10.5, 2*\rt),
				(1,   \rt),   (5,   \rt),   (9,   \rt),   (10,  \rt),
				(3.5, 0),     (6.5, 0),     (9.5, 0),
				(4,   -\rt),  (8,   -\rt),
				(5.5, -2*\rt)
			}
			{
				\lozvdot{\vl}{green!20!white}
			}
			\draw[->,opacity=.5] (-3,3*\rt)--++(18,0) node[right,opacity=1] 
			{6};
			\draw[->,opacity=.5] (-3,2*\rt)--++(18,0) node[right,opacity=1] 
			{5};
			\draw[->,opacity=.5] (-3,\rt)--++(18,0) node[right,opacity=1] 
			{4};
			\draw[->,opacity=.5] (-3,0)--++(18,0) node[right,opacity=1] 
			{3};
			\draw[->,opacity=.5] (-3,-\rt)--++(18,0) node[right,opacity=1] 
			{2};
			\draw[->,opacity=.5] (-3,-2*\rt)--++(18,0) node[right,opacity=1] 
			{1};
			\draw[->,opacity=.5] (4,-4*\rt)--++(-4.5,9*\rt);
		\end{tikzpicture}
	\caption{%
		Identification of an interlacing array with a configuration of lozenges. 
		To interpret the numbers $\lambda^k_j$ as distinct particles, we shift them to
		$\lambda^k_j+k-j$, and for each $k$ place 
		$\{\lambda^k_j+k-j\}_{1\le j\le k}$
		onto the $k$th level in the coordinate system as shown.
		Then we surround each particle by a vertical lozenge, and 
		complete the tiling with lozenges of two other types to get a 
		lozenge tiling of the corresponding sawtooth domain.
		For example, the top row here is a configuration 
		$(12,10,9,6,3,-1)$, so $\lambda^{6}=\nu=(7,6,6,4,2,-1)$.
		This tiling has
		$\mathsf{vol}(\boldsymbol\lambda)=56$.%
	}
	\label{fig:tiling_intro}
\end{figure}
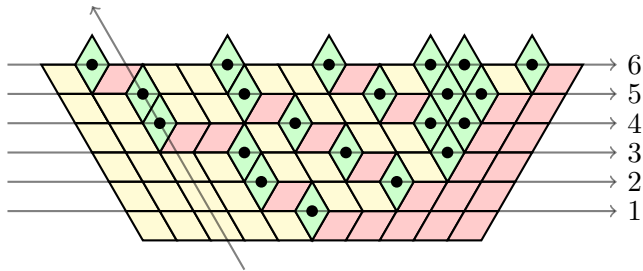

Define $|\lambda^k|:=\lambda^k_1+\ldots +\lambda^k_k$, and note that 
this number is not necessarily nonnegative. 
We will stick to the convention that the volume of a configuration
$\boldsymbol\lambda$ is 
\begin{equation*}
	\mathsf{vol}(\boldsymbol\lambda):=
	\sum_{k=1}^{N-1}|\lambda^k|.
\end{equation*}
One can see that 
for fixed $N$ and $\lambda^N$, 
up to an additive constant depending on $\lambda^N$, 
$\mathsf{vol}(\boldsymbol\lambda)$ can be interpreted 
as a volume behind (i.e., to the left of) the 3-dimensional surface as in \Cref{fig:tiling_intro}
under the agreement that the normal vector to the vertical lozenges points to the right.

Let $P_{q,\nu}^{N}$ be the probability distribution on the finite set of 
interlacing arrays $\boldsymbol\lambda$ of depth $N$ and 
with fixed top row $\lambda^N=\nu$ defined as
\begin{equation*}
	P_{q,\nu}^N(\boldsymbol\lambda):=\frac{q^{\mathsf{vol}(\boldsymbol\lambda)}}{Z_{z,\nu}^N},
\end{equation*}
where $Z_{q,\nu}^N$ is the normalization constant.
We have 
\begin{equation}
	\label{Schur_q_specialization}
	Z_{q,\nu}^N=s_\nu(1,q,\ldots ,q^{N-1})=\prod_{1\le i<j\le N}\frac{q^{\nu_i-i}-q^{\nu_j-j}}{q^{-i}-q^{-j}}, 
\end{equation}
where $s_\nu$ is the Schur polynomial
(see, e.g., \cite{Petrov2012GT} for details).

\subsection{Asymptotic regime}

Let us now describe the limit regime we consider.
Define
\begin{equation*}
	f_N(x):=\frac{\nu_{\lceil Nx \rceil}}{N},\qquad x\in [0,1],
\end{equation*}
see \Cref{fig:function_N_intro}.
This is a weakly decreasing function.

\begin{figure}[htpb]
	\begin{tikzpicture}
		[scale=1.5, thick]
		\def\sixth{.166666667}
		\def\koef{2}
		\draw[->] (-.2,0)--++(1.4*\koef,0) node [right] {$x$};
		\draw[->] (0,-.4)--++(0,1.9) node [left] {$f_N(x)$};
		\draw[fill] (0,0) circle(.9pt) node [below left] {0};
		\draw (1*\koef,-.05)--++(0,.1) node[above] {1};
		\draw (.05,1)--++(-.1,0) node[left] {1};
		\draw[ultra thick,gray] (0,1+\sixth)--++(\sixth*\koef,0)--++(0,-\sixth)--++(2*\sixth*\koef,0)
		--++(0,-2*\sixth)--++(\sixth*\koef,0)--++(0,-2*\sixth)--++(\sixth*\koef,0)--++(0,-3*\sixth)--++(\sixth*\koef,0);
	\end{tikzpicture}
	\caption{Example of the function $f_N(x)$ for $N=6$ and $\nu$ as in \Cref{fig:tiling_intro}.}
	\label{fig:function_N_intro}
\end{figure}
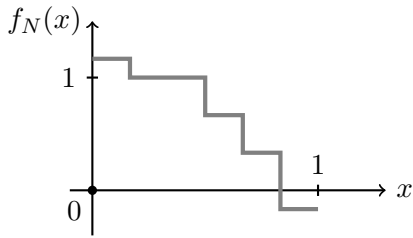

The depth $N$ of the interlacing array 
will play the role of the main parameter going to infinity.
Our main asymptotic assumptions are the following.

\begin{assumption}
	\label{ass:function}
	The rescaled top rows $f_N(x)$ converge to a weakly decreasing,
	nonconstant,
	piecewise continuous
	function $\mathsf{f}(x)$ in the sense that
	\begin{equation*}
		\sup_{x\in[0,1]}
		\left|
			f_N(x)-\mathsf{f}(x)
			\right|=o(N^{-\frac{1}{2}}),\qquad N\to+\infty.
	\end{equation*}
	Without loss of generality we can and will assume that 
	$\mathsf{f}(1)=0$.
\end{assumption}

\begin{remark}
	\label{rmk:hexagon}
	The simplest nontrivial case of the function $\mathsf{f}$ is 
	\begin{equation*}
		\mathsf{f}(x)=
		\begin{cases}
			a,&0\le x\le b;\\
			0,&b<x\le1,
		\end{cases}
	\end{equation*}
	where $0<b<1$ and $a>0$. 
	Then the measures $P^{N}_{q,\nu}$ are supported
	by lozenge tilings of growing lattice hexagons, 
	and after the rescaling the lattice hexagons approximate
	the hexagon with sides $a$, $b$, $1-b$, $a$, $b$, $1-b$, cf.
	\Cref{fig:hexagon}.

	\begin{figure}[htpb]
		\begin{tikzpicture}
			[scale=.3, thick]
			\def\rt{0.866025}
			\foreach \vl in
			{
				(0,10*\rt),(1,10*\rt),(2,10*\rt),(3,10*\rt),(4,10*\rt),(5,10*\rt),
				(15,10*\rt),(16,10*\rt),(17,10*\rt),(18,10*\rt),(19,10*\rt),(20,10*\rt)
				,
				(0.5,9*\rt),(1.5,9*\rt),(2.5,9*\rt),(3.5,9*\rt),(4.5,9*\rt),
				(15.5,9*\rt),(16.5,9*\rt),(17.5,9*\rt),(18.5,9*\rt),(19.5,9*\rt)
				,
				(1,8*\rt),(2,8*\rt),(3,8*\rt),(4,8*\rt),
				(16,8*\rt),(17,8*\rt),(18,8*\rt),(19,8*\rt)
				,
				(1.5,7*\rt),(2.5,7*\rt),(3.5,7*\rt),
				(16.5,7*\rt),(17.5,7*\rt),(18.5,7*\rt)
				,
				(2,6*\rt),(3,6*\rt),
				(17,6*\rt),(18,6*\rt)
				,
				(2.5,5*\rt),
				(17.5,5*\rt)
			}
			{
				\lozv{\vl}{green!20!white}
			}
			\draw (6,10*\rt)--(15,10*\rt);
			\draw (18,4*\rt)--(15,-2*\rt)--(6,-2*\rt)--(3,4*\rt);
			\node at (14.4,7*\rt) {$\lfloor bN \rfloor $};
			\node at (9,11.2*\rt) {$\lfloor aN \rfloor $};
			\node at (19.8,0*\rt) {$\lfloor (1-b)N \rfloor $};
		\end{tikzpicture}
		\caption{An example of a lattice hexagon with $N=12$.}
		\label{fig:hexagon}
	\end{figure}
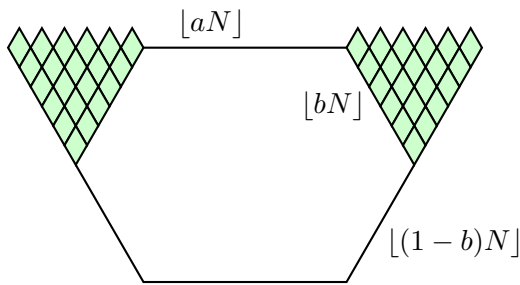

	In general, if $\mathsf{f}(s)$ is piecewise constant then the 
	measures $P^{N}_{q,\nu}$ are supported on lozenge tilings of 
	growing lattice polygons with fixed number of sides 
	each of which grows linearly in $N$.
\end{remark}

\begin{assumption}
	\label{ass:q}
	The parameter $q\in(0,1)$ depends on $N$ and converges to $1$ as
	$q=q(N):=e^{-\upgamma/N}$, where $\upgamma\in(0,+\infty)$ is fixed.
\end{assumption}

In \Cref{sub:slower_q} below we discuss other regimes of $q\nearrow1$
besides the one of Assumption \ref{ass:q}.

\subsection{The GUE corners process}

Let us now describe the limiting object entering our Central Limit Theorem:

\begin{definition}(GUE eigenvalue distribution and GUE corners process)
	\label{def:GUE}
	Consider a $K\times K$ random matrix $H$ from the Gaussian Unitary Ensemble (GUE)
	with variance $\upsigma^2>0$. That is, 
	$H=[H_{ij}]_{i,j=1}^K$, $H^*=H$, 
	and $\Re H_{ij}\sim \mathsf{N}\bigl(0,\frac{1+\mathbf{1}_{i=j}}{2}\,\upsigma^2\bigr)$,
	$i\ge j$,
	while $\Im H_{ij}\sim \mathsf{N}\bigl(0,\frac{1}{2}\upsigma^2\bigr)$, $i>j$.
	Let $\mathsf{L}^K=(\mathsf{L}^K_1\ge \ldots\ge \mathsf{L}^K_K)\in\mathbb{R}^K$ 
	denote the ordered eigenvalues of $H$.
	Their joint probability density has the form
	(e.g., see \cite{mehta2004random} or \cite{AndersonGuionnetZeitouniBook})
	\begin{equation}
		\label{GUE_joint_density}
		\mathrm{Prob}(\mathsf{L}^K_1\in d\xi_1,\ldots,\mathsf{L}^K_K\in d\xi_K )
		=
		\frac{1}{0!1!\ldots(K-1)!\upsigma^{K(K-1)}}
		\prod_{1\le i<j\le K}(\xi_i-\xi_j)^2
		\prod_{j=1}^{K}
		\frac{e^{-\xi_j^2/(2\upsigma^2)}}{\sqrt{2\pi \upsigma^2}}d\xi_j
		.
	\end{equation}
	We will denote the distribution of $\mathsf{L}^K$ by 
	$\mathsf{GUE}_K(\upsigma^2)$, and this will be referred to as the 
	GUE eigenvalue distribution.

	Along with the eigenvalues $\mathsf{L}^K$ of $H$, consider 
	eigenvalues $\mathsf{L}^r=(\mathsf{L}^r_1\ge \ldots\ge \mathsf{L}^r_r )\in\mathbb{R}^{r}$
	of the principal $r$-corner $[H_{ij}]_{i,j=1}^r$ of $H$ for each $r=1,\ldots ,K $.
	These eigenvalues satisfy the interlacing constraints
	\begin{equation*}
		\mathsf{L}^r_j\le \mathsf{L}^{r-1}_{j-1}\le \mathsf{L}^r_{j-1},
		\qquad r=1,\ldots,K,\quad
		j=2,\ldots,r .
	\end{equation*}
	The marginal distribution of each $\mathsf{L}^r$ is the same as \eqref{GUE_joint_density}
	(with $K$ replaced by $r$). 
	Note that $\mathsf{L}^{1}_{1}$ is simply a normal 
	random variable with mean $0$ and variance $\upsigma^2$.
	The joint distribution of the whole interlacing array
	$\mathbf{L}=\{\mathsf{L}^r_j\colon 1\le r\le K,\,1\le j\le r \}\in \mathbb{R}^{\frac{K(K+1)}{2}}$
	will be referred to as the GUE corners process. We will denote it by
	$\mathsf{GUE}_{K\times K}(\upsigma^2)$.
\end{definition}

The GUE corners process (earlier also called the GUE minors process)
is a classical object of random matrix theory.
It appeared in the context of interacting particle systems 
in \cite{Baryshnikov_GUE2001}
and also in connection with random domino and lozenge tilings in 
\cite{Johansson2005arctic},
\cite{OkounkovReshetikhin2006RandomMatr},
and
\cite{johansson2006eigenvalues}.
The latter three works contain formulas for the correlation kernel
of the GUE corners process (which is a determinantal point process),
but we do not need the kernel in the present paper.
See also \Cref{sub:previous} below for an exposition of
previous results on GUE corners limits in random lozenge tilings.

\subsection{Interlacing Central Limit Theorem}

To formulate our main result let us introduce 
the limiting global location
\begin{equation}
	\mathsf{u}
	:=
	\frac{1}{\upgamma}
	\log
	\left( 
		\frac{\int_0^1 e^{\upgamma s}ds}
		{\int_0^1 e^{\upgamma(s-\mathsf{f}(s))}ds}
	\right)
	\label{u_intro}
\end{equation}
and the limiting variance
\begin{equation}
	\upsigma^2:=
	\frac{1}{(e^{\upgamma}-1)^{2}}
	\int_0^1
	\left(
		e^{2\upgamma(\mathsf{u}+s-\mathsf{f}(s))}
		-
		e^{2s\upgamma}
	\right)ds
	\label{sigma_square}
\end{equation}
depending on $\upgamma>0$ and the limiting profile 
$\mathsf{f}(s)$
of the $N$th row of the interlacing array.

\begin{theorem}
	\label{thm:main_result}
	Fix any $K\ge1$.
	Under Assumptions \ref{ass:function} and \ref{ass:q}, 
	we have the following convergence in distribution
	on $\mathbb{R}^{\frac{K(K+1)}{2}}$ as $N\to+\infty$:
	\begin{equation*}
		\biggl\{ \frac{\lambda^r_j(N)-\mathsf{u}N}{\sqrt N} \colon 1\le r\le K,\,1\le j\le r \biggr\}
		\to
		\left\{ \mathsf{L}^r_j\colon 1\le r\le K,\,1\le j\le r \right\}\sim
		\mathsf{GUE}_{K\times K}(\upsigma^2),
	\end{equation*}
	where $\lambda^r_j=\lambda^r_j(N)$ denote the locations of the vertical lozenges
	on the first $K$ levels under the distribution $P^N_{q,\nu}$.
\end{theorem}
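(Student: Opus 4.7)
The plan is to split the theorem into (i) a single-level CLT for the deepest row, $\{(\lambda^K_j-\mathsf{u}N)/\sqrt N\}_{j=1}^K \to \mathsf{GUE}_K(\upsigma^2)$, and (ii) a Gibbs-type statement that, in the rescaled coordinates, the conditional distribution of $(\lambda^1,\ldots,\lambda^{K-1})$ given $\lambda^K$ converges to the uniform probability measure on the interlacing polytope (with top row equal to the rescaled $\lambda^K$). Together, (i) and (ii) imply the stated joint convergence because the GUE corners process admits a classical description due to Baryshnikov: the joint density of $\{\mathsf{L}^r_j\}$ is proportional to $\mathbf{1}_{\{\mathsf{L}^{r-1}\prec \mathsf{L}^r\}}\cdot \prod_{i<j\le K}(\mathsf{L}^K_i-\mathsf{L}^K_j)\cdot \exp(-|\mathsf{L}^K|^2/(2\upsigma^2))$, so that conditional on $\mathsf{L}^K$ the lower rows are Lebesgue-uniform on the interlacing polytope.

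Step (ii) is almost immediate from the form of $P^N_{q,\nu}$. Conditionally on $\lambda^K$, the upper rows have density proportional to $q^{\sum_{k=1}^{K-1}|\lambda^k|}$ on interlacing arrays with top row $\lambda^K$. Under the rescaling $\lambda^k_j = \mathsf{u}N + \sqrt N\,\xi^k_j$, one has $q^{\sum_{k=1}^{K-1}|\lambda^k|} = e^{-\upgamma\mathsf{u}K(K-1)/2}\cdot \exp(-\upgamma N^{-1/2}\sum_{k,j}\xi^k_j)$: the first factor is a deterministic constant that cancels in the normalization, while the second factor tends uniformly to $1$ on compacta. The rescaled conditional density therefore collapses to the indicator of the interlacing polytope, that is, to the uniform measure.

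Step (i) is the main technical part. Iterating the branching rule for Schur polynomials yields the marginal
\[P(\lambda^K=\mu)=\frac{q^{(N-K)|\mu|}\,s_\mu(1,q,\ldots,q^{K-1})\,s_{\nu/\mu}(1,q,\ldots,q^{N-K-1})}{s_\nu(1,q,\ldots,q^{N-1})},\]
a Schur measure on partitions of length at most $K$. Using the bialternant formula \eqref{Schur_q_specialization} for $s_\mu$ and the Jacobi--Trudi identity to write $s_{\nu/\mu}$ as a determinant in $q$-binomial entries (equivalently, via an Okounkov-type $K$-fold contour integral), one expresses the joint distribution of $(\mu_1,\ldots,\mu_K)$ as a Vandermonde-type piece $\prod_{i<j}(q^{\mu_i-i}-q^{\mu_j-j})$ times a factor whose logarithm, in the asymptotic regime $q=e^{-\upgamma/N}$, $\mu_j = \mathsf{u}N+\sqrt N\,\xi_j$, is approximated by a Riemann-sum action involving the profile $\mathsf{f}$. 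A steepest-descent analysis then extracts the limit: the first-order critical-point equation identifies $\mathsf{u}$ as in \eqref{u_intro}; the second-order quadratic expansion around the saddle yields the Gaussian weight with variance $\upsigma^2$ as in \eqref{sigma_square}; and the Vandermonde factor rescales to $\prod_{i<j}(\xi_i-\xi_j)$ which, combined with a second Vandermonde from the Jacobi--Trudi determinant of $s_{\nu/\mu}$, produces the squared Vandermonde of $\mathsf{GUE}_K$.

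The principal obstacle is the saddle-point analysis in step (i). One must (a) approximate the $q$-Pochhammer products in $s_{\nu/\mu}(1,q,\ldots,q^{N-K-1})$ by integrals against the profile $\mathsf{f}$ to precision $o(N^{-1/2})$, which motivates the sharp convergence rate in Assumption \ref{ass:function}; (b) deform contours to pass through the correct saddle and control the tails outside a Gaussian window of width $\sqrt N$; and (c) track prefactors carefully enough to recover the non-universal factor $(e^\upgamma-1)^{-2}$ appearing in $\upsigma^2$, which encodes the interaction between $\mathsf{f}$ and the $q$-weighting and degenerates to the uniform ($q=1$) case in the formal limit $\upgamma\downarrow 0$. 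Once step (i) is established, combining it with (ii) and Baryshnikov's description yields the full joint convergence to $\mathsf{GUE}_{K\times K}(\upsigma^2)$.
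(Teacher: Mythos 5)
Your proposal follows essentially the same route as the paper: a level-$K$ CLT for the bottom row obtained by steepest-descent analysis of a determinantal pre-limit formula for that marginal (critical point giving $\mathsf{u}$, Gaussian quadratic term giving $\upsigma^2$, two Vandermonde factors combining into the squared Vandermonde of $\mathsf{GUE}_K$), combined with the observation that the $q$-Gibbs property degenerates to the uniform Gibbs property at fluctuation scale $\sqrt{N}$, which together with the conditional-uniformity description of the GUE corners process yields the joint convergence. The only substantive difference is that you propose to re-derive the pre-limit formula from the branching rule plus Jacobi--Trudi (which naively produces an $N\times N$ rather than $K\times K$ determinant, so some nontrivial reduction is still needed there), whereas the paper imports exactly this $K\times K$ determinant with single-contour-integral entries from \cite{Petrov2012GT} and then carries out the saddle-point analysis you outline.
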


We call \Cref{thm:main_result} on Gaussian asymptotics of 
vertical lozenges at finitely many bottommost rows of the tiling
the Interlacing Central Limit Theorem (ICLT).
The proof of \Cref{thm:main_result} occupies
\Cref{sec:prelimit_formula,sec:asymptotic_mean,sec:critical_points,sec:completing_proof}.
Let us make two remarks in connection with this theorem.

\subsubsection{Frozen boundary}

Besides \Cref{thm:main_result} describing the asymptotic behavior 
of finitely many rows of the interlacing array adjacent to the bottom
boundary (cf. \Cref{fig:tiling_intro})
there are several other interesting 
limit regimes in various ensembles of random lozenge tilings. 
Let us only discuss the connection of ICLT and
the 
law of large numbers established
in \cite{CohnLarsenPropp}, \cite{CohnKenyonPropp2000}, and
\cite{OkounkovKenyon2007Limit}
both in the uniform and in the $q^{\mathsf{vol}}$-weighted case. 
This law of large numbers implies the existence of a frozen boundary curve
separating the liquid phase (in which asymptotically all types of lozenges are present)
from the frozen regions (consisting of lozenges of only one type),
see \Cref{fig:simulation}.
The ICLT (as in \Cref{thm:main_result}) arises 
at a point where two frozen regions adjacent to the bottom boundary meet.
Thus, the global location $\mathsf{u}$ \eqref{u_intro} corresponds to the 
point where the frozen boundary curve is tangent to the bottom boundary.

To the best knowledge of the authors, the frozen boundary 
for $q^{\mathsf{vol}}$-weighted lozenge tilings is explicitly 
known
only in the case of unbounded 
plane partitions \cite{okounkov2003correlation}, \cite{Okounkov2005} 
and for the 
hexagon \cite{borodin-gr2009q}.\footnote{%
	In fact, this paper
	considers a more general weighting of lozenge tilings related to the 
	$q$-Racah univariate orthogonal polynomials. 
	In the uniform ($q=1$) case 
	the frozen boundary is known explicitly for a much wider family of boundary conditions,
	e.g., see
	\cite{Petrov2012GT},
	\cite{GorinBufetov2013free},
	\cite{duse2015asymptotic}.

	We also note that
	\cite{OkounkovKenyon2007Limit}
	implies that for a bounded polygon (both in the uniform and in the $q^{\mathsf{vol}}$-weighted case)
	the frozen boundary solves an algebraic equation
	which can be found approximately via numeric homotopy.
	By explicitly knowing the frozen boundary we mean an explicit solution
	to such an equation in the form of, e.g., a parametrization 
	like in 
	\cite{Petrov2012GT} for sawtooth domains in the uniform case.%
}
One can check that the tangent point of the frozen boundary from \cite{borodin-gr2009q}
and the boundary of the hexagon
coincides with $\mathsf{u}$.

At generic points of the frozen boundary
the locations of the vertical lozenges
form a determinantal point process 
with the extended Airy kernel in the scaling
limit \cite{Okounkov2005}, \cite{DusseMetcalfeAiry2017}.

\begin{figure}[htpb]
	\hspace*{-0.15\textwidth}\begin{tikzpicture}
		[scale=1]
		\node[anchor=south] at (0,0) {\includegraphics[width=1.3\textwidth]{}};
		\draw[ultra thick, color=black!70!white, opacity=.7] (0.35,0.7) ellipse (40pt and 13pt);
	\end{tikzpicture}
	\caption{%
		A uniformly random lozenge tiling of a 9-gon. The frozen boundary is a cardioid.
		A neighborhood of the turning point where the frozen boundary is tangent to the bottom
		boundary of the polygon is highlighted.
	}
	\label{fig:simulation}
\end{figure}

\subsubsection{Extension to all $\upgamma\in\mathbb{R}$}

Sending $\upgamma\to0$ in \eqref{u_intro} and \eqref{sigma_square}
recovers the parameters in the ICLT
for the uniform case
\cite{GorinPanova2012_full} (see also \cite{novak2015lozenge}):

\begin{proposition}[Uniform case $\upgamma=0$]
	\label{prop:q=1}
	We have
	\begin{equation*}
		\lim_{\upgamma\searrow0}\mathsf{u}
		=
		\int_0^1\mathsf{f}(s)ds
		,
		\qquad 
		\lim_{\upgamma\searrow0}\upsigma^2
		=
		\int_0^1 \mathsf{f}(s)^2ds
		-
		\Bigl( \int_0^1\mathsf{f}(s)ds \Bigr)^2
		+
		\int_0^1(1-2s)\mathsf{f}(s)ds
		.
	\end{equation*}
\end{proposition}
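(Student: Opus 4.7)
The plan is direct Taylor expansion in $\upgamma$ around $\upgamma=0$, combined with careful tracking of orders because the prefactor $(e^{\upgamma}-1)^{-2}$ in $\upsigma^{2}$ is singular of order $\upgamma^{-2}$.

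For the limit of $\mathsf{u}$, I first expand
\begin{equation*}
\int_{0}^{1} e^{\upgamma s}\,ds = 1+\tfrac{\upgamma}{2}+O(\upgamma^{2}),\qquad \int_{0}^{1} e^{\upgamma(s-\mathsf{f}(s))}\,ds = 1+\upgamma\Bigl(\tfrac{1}{2}-\textstyle\int_{0}^{1}\mathsf{f}\Bigr)+O(\upgamma^{2}).
\end{equation*}
Their ratio is $1+\upgamma\int_{0}^{1}\mathsf{f}(s)\,ds+O(\upgamma^{2})$, the logarithm is $\upgamma\int_{0}^{1}\mathsf{f}(s)\,ds+O(\upgamma^{2})$, and dividing by $\upgamma$ gives $\lim_{\upgamma\searrow 0}\mathsf{u}=\int_{0}^{1}\mathsf{f}(s)\,ds$.

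For the limit of $\upsigma^{2}$, set $g(s):=\mathsf{u}+s-\mathsf{f}(s)$ and Taylor expand
\begin{equation*}
e^{2\upgamma g(s)}-e^{2\upgamma s}=2\upgamma(\mathsf{u}-\mathsf{f}(s))+2\upgamma^{2}(\mathsf{u}-\mathsf{f}(s))(\mathsf{u}+2s-\mathsf{f}(s))+O(\upgamma^{3}).
\end{equation*}
The $\upgamma$-linear piece integrates to $2\upgamma(\mathsf{u}-\int\mathsf{f})$, which by the previous part is itself already of order $\upgamma^{2}$. So to extract the $\upgamma^{2}$ coefficient of $\int_{0}^{1}(e^{2\upgamma g}-e^{2\upgamma s})\,ds$ I need the first-order correction of $\mathsf{u}$, namely
\begin{equation*}
\mathsf{u}=\textstyle\int_{0}^{1}\mathsf{f}(s)\,ds+\upgamma D+O(\upgamma^{2}),\qquad D=\int_{0}^{1}s\mathsf{f}(s)\,ds-\tfrac{1}{2}\int\mathsf{f}-\tfrac{1}{2}\int\mathsf{f}^{2}+\tfrac{1}{2}\bigl(\int\mathsf{f}\bigr)^{2},
\end{equation*}
which drops out by carrying the expansion of the first part one more order. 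The $\upgamma^{2}$ quadratic term in the integrand can then be evaluated at $\upgamma=0$ using $\mathsf{u}\to\int\mathsf{f}$; integrating the resulting polynomial in $s$ and $\mathsf{f}(s)$ yields another explicit constant. Adding these two $\upgamma^{2}$ contributions and dividing by $(e^{\upgamma}-1)^{2}/\upgamma^{2}\to 1$ gives the claimed formula after cancellations.

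The main obstacle is bookkeeping for those cancellations: two $\upgamma^{2}$ contributions of partially opposite sign (one from the subleading behavior of $\mathsf{u}$, one from the quadratic Taylor coefficient of the integrand) must combine into the clean expression $\int\mathsf{f}^{2}-(\int\mathsf{f})^{2}+\int(1-2s)\mathsf{f}\,ds$. A more structural way, which may be cleaner to present, is to note that $\log\int_{0}^{1}e^{\upgamma s}\,ds$ and $\log\int_{0}^{1}e^{\upgamma(s-\mathsf{f}(s))}\,ds$ are cumulant generating functions of $U$ and $U-\mathsf{f}(U)$ respectively, for $U$ uniform on $[0,1]$; expanding each through order $\upgamma^{2}$ using the first two cumulants, and then using the identity $e^{\upgamma\mathsf{u}}\int_{0}^{1}e^{\upgamma(s-\mathsf{f})}\,ds=\int_{0}^{1}e^{\upgamma s}\,ds$ built into the definition of $\mathsf{u}$ to rewrite the bracket in $\upsigma^{2}$, makes the cancellations structural rather than ad hoc.
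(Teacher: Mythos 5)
Your proposal is correct and follows essentially the same route as the paper: Taylor expansion of both $\mathsf{u}$ and the integrand of $\upsigma^2$ through order $\upgamma^2$, using the second-order term of $\mathsf{u}$ (your constant $D$, which matches the paper's) to resolve the $\upgamma$-linear piece $2\upgamma(\mathsf{u}-\int_0^1\mathsf{f})$ of the integral. The stated expansions and the final cancellation check out against the paper's computation.
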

\begin{proof}
	For shorter notation, let $\bar g$ denote the integral of $g(s)$ over $s\in[0,1]$.
	The first claim follows from a Taylor expansion:
	\begin{align*}
		\upgamma\mathsf{u}
		&=
		\log\int_0^1 
		\bigl(
			1+\upgamma s+\tfrac12\upgamma^2s^2+O(\upgamma^3)
		\bigr)ds
		-
		\log\int_0^1 
		\bigl(
			1+\upgamma(s-\mathsf{f}(s))+\tfrac12\upgamma^2(s-\mathsf{f}(s))^2+O(\upgamma^3)
		\bigr)ds
		\\&=
		\log
		\left( 1+\tfrac12 \upgamma+\tfrac 16\upgamma^2+O(\upgamma^3) \right)
		-
		\log
		\left( 
			1+\tfrac12\upgamma- \upgamma
			\bar{\mathsf{f}}
			+
			\tfrac16 \upgamma^2+\tfrac12\upgamma^2\overline{\mathsf{f}^2}
			-\upgamma^2 \overline{s\mathsf{f}}
			+O(\upgamma^3)
		\right)
		\\&=
		\upgamma \bar{\mathsf{f}}+
		\tfrac12\upgamma^2\left( (\bar{\mathsf{f}})^2-\bar{\mathsf{f}}-\overline{\mathsf{f}^2}+2\overline{s\mathsf{f}} \right)
		+O(\upgamma^3).
	\end{align*}
	This implies the first claim, and we will also need the next term in $\mathsf{u}$ to address the 
	second claim.
	Namely, we have
	\begin{align*}
		(\upgamma^2+O(\upgamma^3))\upsigma^2
		&=
		\int_0^1
		\left(
			1+2\upgamma(\mathsf{u}+s-\mathsf{f}(s))+2\upgamma^2(\mathsf{u}+s-\mathsf{f}(s))^2
			-1-2s\upgamma-2s^2\upgamma^2+O(\upgamma^3)
		\right)ds
		\\&=
		2\upgamma(\mathsf{u}-\bar{\mathsf{f}})
		+2\upgamma^2
		\bigl( \mathsf{u}^2+\overline{\mathsf{f}^2}+\mathsf{u}-2\overline{s \mathsf{f}}-2\mathsf{u}\bar{\mathsf{f}} \bigr)
		+O(\upgamma^3)
		\\&=
		\upgamma^2
		\bigl( (\bar{\mathsf{f}})^2-\bar{\mathsf{f}}-\overline{\mathsf{f}^2}+2\overline{s\mathsf{f}} \bigr)
		+
		2\upgamma^2
		\bigl( -(\bar{\mathsf{f}})^2+\overline{\mathsf{f}^2}+\bar{\mathsf{f}}
		-2\overline{s \mathsf{f}} \bigr)
		+O(\upgamma^3),
	\end{align*}
	which implies the second claim.
\end{proof}

The next statement allows us to extend \Cref{thm:main_result}
to the case $q>1$, $q\searrow1$, that is, to $\upgamma<0$.
Observe that 
by reflecting the interlacing array with respect to zero
and shifting it one can turn the measure
$P^{N}_{q,\nu}$, where $q=e^{-\upgamma/N}<1$, $\upgamma>0$, 
into the measure $P^{N}_{1/q,\hat \nu}$, where
$\hat\nu_j=\nu_1-\nu_{N+1-j}$ for $j=1,\ldots,N$.
If $\nu$ corresponds to a function $\mathsf{f}(s)$ as in Assumption \ref{ass:function},
then $\hat\nu$ corresponds to the function $\hat{\mathsf{f}}(s)=\mathsf{f}(0)-\mathsf{f}(1-s)$.
Clearly, if \Cref{thm:main_result} holds for a sequence of measures of the form $P^{N}_{q,\nu}$, then 
it also holds for the measures $P^{N}_{1/q,\hat\nu}$, and the next proposition 
matches the limiting parameters:

\begin{proposition}[Symmetry]
	\label{prop:symmetry}
	Let $\mathsf{u},\upsigma^2$ correspond to the data $(\upgamma,\mathsf{f})$, where $\upgamma>0$,
	via \eqref{u_intro}, \eqref{sigma_square}.
	Then the data $(-\upgamma,\hat{\mathsf{f}})$
	leads to the parameters $\hat{\mathsf{u}}=\mathsf{f}(0)-\mathsf{u}$ and
	$\hat\upsigma^2=\upsigma^2$.
\end{proposition}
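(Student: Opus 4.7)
The proof is a direct substitution calculation based on the change of variables $t = 1-s$. Below I sketch the computation.

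\textbf{Step 1: Match $\hat{\mathsf{u}}$.} Write out the formula \eqref{u_intro} with $(\upgamma,\mathsf{f})$ replaced by $(-\upgamma,\hat{\mathsf{f}})$ and then, in each of the two integrals that appear, substitute $t=1-s$. The numerator becomes
\begin{equation*}
\int_0^1 e^{-\upgamma s}\,ds = e^{-\upgamma}\int_0^1 e^{\upgamma t}\,dt,
\end{equation*}
while the denominator, using $\hat{\mathsf{f}}(s)=\mathsf{f}(0)-\mathsf{f}(1-s)$, becomes
\begin{equation*}
\int_0^1 e^{-\upgamma(s-\mathsf{f}(0)+\mathsf{f}(1-s))}\,ds
= e^{-\upgamma(1-\mathsf{f}(0))}\int_0^1 e^{\upgamma(t-\mathsf{f}(t))}\,dt.
\end{equation*}
Taking $-\upgamma^{-1}$ times the logarithm of the ratio, the prefactors collapse to $-\upgamma^{-1}(-\upgamma\mathsf{f}(0)+\upgamma\mathsf{u})=\mathsf{f}(0)-\mathsf{u}$, which gives $\hat{\mathsf{u}}=\mathsf{f}(0)-\mathsf{u}$ as claimed.

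\textbf{Step 2: Match $\hat{\upsigma}^2$.} The outer factor transforms as
\begin{equation*}
\frac{1}{(e^{-\upgamma}-1)^2} = \frac{e^{2\upgamma}}{(e^{\upgamma}-1)^2}.
\end{equation*}
For the first integrand, substitute $\hat{\mathsf{u}}=\mathsf{f}(0)-\mathsf{u}$ and $\hat{\mathsf{f}}(s)=\mathsf{f}(0)-\mathsf{f}(1-s)$, so that the exponent simplifies to $2\upgamma\mathsf{u}-2\upgamma s-2\upgamma\mathsf{f}(1-s)$, and then apply $t=1-s$ to get $e^{2\upgamma\mathsf{u}-2\upgamma}\int_0^1 e^{2\upgamma(t-\mathsf{f}(t))}\,dt$. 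The same substitution $t=1-s$ in the subtracted term produces $e^{-2\upgamma}\int_0^1 e^{2\upgamma t}\,dt$. Multiplying by $e^{2\upgamma}/(e^{\upgamma}-1)^2$, the factor $e^{-2\upgamma}$ cancels in both pieces and the resulting expression is exactly \eqref{sigma_square}, so $\hat{\upsigma}^2=\upsigma^2$.

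\textbf{Obstacle.} There is no genuine obstacle — the proof is purely mechanical. The only point requiring care is bookkeeping of the constant $\mathsf{f}(0)$ shift together with the sign flip $\upgamma\mapsto-\upgamma$; handling both simultaneously is what allows the exponential prefactors arising from $t=1-s$ to cancel cleanly and to reduce $\hat{\mathsf{u}}$ to a shifted reflection of $\mathsf{u}$ and $\hat{\upsigma}^2$ to $\upsigma^2$.
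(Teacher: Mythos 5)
Your computation is correct and is exactly the ``straightforward verification'' the paper alludes to: the change of variables $t=1-s$ combined with the sign flip $\upgamma\mapsto-\upgamma$ and the constant shift by $\mathsf{f}(0)$ makes the exponential prefactors cancel, yielding $\hat{\mathsf{u}}=\mathsf{f}(0)-\mathsf{u}$ and $\hat{\upsigma}^2=\upsigma^2$. No gaps; this matches the paper's (unwritten) argument.
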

\begin{proof}
	A straightforward verification.
\end{proof}

\begin{remark}
	\label{rmk:nonpositive_gamma}
	\Cref{prop:q=1,prop:symmetry} imply that 
	\Cref{thm:main_result} holds 
	in the regime $q=e^{-\upgamma/N}$ for any fixed $\upgamma\in \mathbb{R}$.
	For simplicity in the rest of the paper we will continue to assume that $\upgamma$ is positive.
\end{remark}

\subsection{Slower $q\nearrow1$ regime}
\label{sub:slower_q}

\Cref{thm:main_result}
establishes,
in the regime 
$q=e^{-\upgamma/N}\nearrow1$ with $\upgamma>0$ fixed,
a universal ICLT 
regardless of the
(nonconstant) function $\mathsf{f}(s)$
governing the
asymptotic behavior of the
top row.
Let us argue that when $q=q(N)$ converges to $1$ slower, 
the presence of the GUE corners limit is not guaranteed and 
depends on the nature of the function $\mathsf{f}(s)$.

The law of large numbers
\cite{CohnLarsenPropp}, \cite{CohnKenyonPropp2000},
\cite{OkounkovKenyon2007Limit} implies the existence of 
a nonempty liquid region at scale $N$ 
for any fixed $\upgamma$ in the regime $q=e^{-\upgamma/N}$.
The slower $q\nearrow1$ regime leads 
to a completely frozen situation at scale $N$, 
i.e., there is no liquid region, 
and the random tiling at this scale looks like the unique tiling of the
minimal volume (see \Cref{fig:min_volume} for examples of the latter).
However, one can still ask if there is an ICLT at some fluctuation scale $\ll N$.

The slower convergence $q\nearrow1$ is equivalent to 
taking $\upgamma=\upgamma(N)$ depending on $N$ such that 
$\upgamma(N)\to+\infty$ but $\upgamma(N)\ll N$.
Let us first formally send $\upgamma\to+\infty$
in the limiting quantities \eqref{u_intro}, \eqref{sigma_square}
in the following
two cases of the top row:
\begin{equation*}
	\mathsf{f}_1(s):=\begin{cases}
		1,&0\le s\le\frac{1}{2};\\
		0,&\frac{1}{2}<s\le 1,
	\end{cases}
	\qquad\qquad
	\mathsf{f}_2(s)=1-s.
\end{equation*}
The function $\mathsf{f}_1$ corresponds to the hexagon boundary conditions,
and $\mathsf{f}_2$ --- to a top row having limiting density $\frac{1}{2}$
(the latter can be modeled by taking $\nu_j=N-j$).
Straightforward computations show that as $\upgamma\to+\infty$:
\begin{equation*}
	\mathsf{u}_1\sim \frac{e^{-\upgamma/2}}{\upgamma},\qquad 
	\upsigma_1^2\sim \frac{e^{-\upgamma/2}}{\upgamma},
	\qquad \qquad 
	\mathsf{u}_2\sim \frac{\log 2}{\upgamma}
	,\qquad 
	\upsigma_2^2\sim \frac{1}{2\upgamma}
	.
\end{equation*}
This suggests that if there could be an ICLT in the slower $q\nearrow1$ regime
then its scale must depend on the form of the function $\mathsf{f}(s)$.

In the hexagon case
there are, however, strong indications that an
ICLT at a finite distance from the bottom row
is not possible. 
Namely, in the slower $q\nearrow1$ regime 
the nontrivial asymptotic behavior (at a scale $\ll N$)
occurs in a neighborhood of the unique three-lozenge combination
\begin{tikzpicture}
	[scale=.22, thick]
	\def\rt{0.866025}
	\lozr{(.5,-\rt)}{red!20!white}
	\lozl{(1,0)}{yellow!20!white}
	\lozv{(0,0)}{green!20!white}
\end{tikzpicture}
in the minimal volume tiling
(cf. \Cref{fig:min_volume}, left).
Observe that the distance of
\begin{tikzpicture}
	[scale=.22, thick]
	\def\rt{0.866025}
	\lozr{(.5,-\rt)}{red!20!white}
	\lozl{(1,0)}{yellow!20!white}
	\lozv{(0,0)}{green!20!white}
\end{tikzpicture}
from the bottom row is of order $N$.
The configuration around 
\begin{tikzpicture}
	[scale=.22, thick]
	\def\rt{0.866025}
	\lozr{(.5,-\rt)}{red!20!white}
	\lozl{(1,0)}{yellow!20!white}
	\lozv{(0,0)}{green!20!white}
\end{tikzpicture}
should asymptotically coincide with the $q^{\mathsf{vol}}$-weighted random plane 
partition without boundary conditions 
(the latter model was studied in, e.g., \cite{ferrari2003step}, \cite{okounkov2003correlation}, \cite{Okounkov2005}).
The behavior of the bottommost lozenge of the type
\begin{tikzpicture}
	[scale=.22, thick]
	\def\rt{0.866025}
	\lozl{(1,0)}{yellow!20!white}
\end{tikzpicture}
can then be guessed from the corresponding results on random plane partitions
\cite{mutafchiev2006size}, \cite{Vershik2006}.
In particular, 
the fluctuations of the row number containing the bottommost lozenge
\begin{tikzpicture}
	[scale=.22, thick]
	\def\rt{0.866025}
	\lozl{(1,0)}{yellow!20!white}
\end{tikzpicture}
should grow to infinity.
Thus, even if we tune the speed at which $q$ goes to $1$ so that 
the plane partition behavior is visible (in any sense) at a finite distance from the bottom, 
still with positive probability the vertical lozenges 
at the bottommost rows are packed to the left (as in the minimal volume tiling).
This suggests that for the hexagon (and similarly for any polygonal boundary conditions)
the ICLT at a finite distance from the bottom does not occur.

\begin{figure}[htpb]
		\begin{tikzpicture}
			[scale=.3, thick]
			\def\rt{0.866025}
			\foreach \rl in
			{
				(15.5,3*\rt),(16.5,3*\rt),(8.5,3*\rt),(9.5,3*\rt),(10.5,3*\rt),(11.5,3*\rt),(12.5,3*\rt),(13.5,3*\rt),(14.5,3*\rt)
				,
				(15,2*\rt),(16,2*\rt),(8,2*\rt),(9,2*\rt),(10,2*\rt),(11,2*\rt),(12,2*\rt),(13,2*\rt),(14,2*\rt)
				,
				(15.5,1*\rt),(7.5,1*\rt),(8.5,1*\rt),(9.5,1*\rt),(10.5,1*\rt),(11.5,1*\rt),(12.5,1*\rt),(13.5,1*\rt),(14.5,1*\rt)
				,
				(15,0*\rt),(7,0*\rt),(8,0*\rt),(9,0*\rt),(10,0*\rt),(11,0*\rt),(12,0*\rt),(13,0*\rt),(14,0*\rt)
				,
				(6.5,-1*\rt),(7.5,-1*\rt),(8.5,-1*\rt),(9.5,-1*\rt),(10.5,-1*\rt),
				(11.5,-1*\rt),(12.5,-1*\rt),(13.5,-1*\rt),(14.5,-1*\rt)
				,
				(6,-2*\rt),(7,-2*\rt),(8,-2*\rt),(9,-2*\rt),(10,-2*\rt),(11,-2*\rt),(12,-2*\rt),(13,-2*\rt),(14,-2*\rt)
			}
			{
				\lozr{\rl}{red!20!white}
			}
			\foreach \ll in
			{
				(6.5,9*\rt),(7.5,9*\rt),(8.5,9*\rt),(9.5,9*\rt),(10.5,9*\rt),(11.5,9*\rt),(12.5,9*\rt),(13.5,9*\rt),(14.5,9*\rt)
				,
				(15,8*\rt),(7,8*\rt),(8,8*\rt),(9,8*\rt),(10,8*\rt),(11,8*\rt),(12,8*\rt),(13,8*\rt),(14,8*\rt)
				,
				(15.5,7*\rt),(7.5,7*\rt),(8.5,7*\rt),(9.5,7*\rt),(10.5,7*\rt),(11.5,7*\rt),(12.5,7*\rt),(13.5,7*\rt),(14.5,7*\rt)
				,
				(15,6*\rt),(16,6*\rt),(8,6*\rt),(9,6*\rt),(10,6*\rt),(11,6*\rt),(12,6*\rt),(13,6*\rt),(14,6*\rt)
				,
				(15.5,5*\rt),(16.5,5*\rt),(8.5,5*\rt),(9.5,5*\rt),(10.5,5*\rt),(11.5,5*\rt),(12.5,5*\rt),(13.5,5*\rt),(14.5,5*\rt)
				,
				(15,4*\rt),(16,4*\rt),(17,4*\rt),(9,4*\rt),(10,4*\rt),(11,4*\rt),(12,4*\rt),(13,4*\rt),(14,4*\rt)
			}
			{
				\lozl{\ll}{yellow!20!white}
			}
			\foreach \vl in
			{
				(0,10*\rt),(1,10*\rt),(2,10*\rt),(3,10*\rt),(4,10*\rt),(5,10*\rt),
				(15,10*\rt),(16,10*\rt),(17,10*\rt),(18,10*\rt),(19,10*\rt),(20,10*\rt)
				,
				(0.5,9*\rt),(1.5,9*\rt),(2.5,9*\rt),(3.5,9*\rt),(4.5,9*\rt),(5.5,9*\rt),
				(15.5,9*\rt),(16.5,9*\rt),(17.5,9*\rt),(18.5,9*\rt),(19.5,9*\rt)
				,
				(1,8*\rt),(2,8*\rt),(3,8*\rt),(4,8*\rt),(5,8*\rt),(6,8*\rt),
				(16,8*\rt),(17,8*\rt),(18,8*\rt),(19,8*\rt)
				,
				(1.5,7*\rt),(2.5,7*\rt),(3.5,7*\rt),(4.5,7*\rt),(5.5,7*\rt),(6.5,7*\rt),
				(16.5,7*\rt),(17.5,7*\rt),(18.5,7*\rt)
				,
				(2,6*\rt),(3,6*\rt),(4,6*\rt),(5,6*\rt),(6,6*\rt),(7,6*\rt),
				(17,6*\rt),(18,6*\rt)
				,
				(2.5,5*\rt),(3.5,5*\rt),(4.5,5*\rt),(5.5,5*\rt),(6.5,5*\rt),(7.5,5*\rt),
				(17.5,5*\rt)
				,
				(3,4*\rt),(4,4*\rt),(5,4*\rt),(6,4*\rt),(7,4*\rt),(8,4*\rt)
				,
				(3.5,3*\rt),(4.5,3*\rt),(5.5,3*\rt),(6.5,3*\rt),(7.5,3*\rt)
				,
				(4,2*\rt),(5,2*\rt),(6,2*\rt),(7,2*\rt)
				,
				(4.5,1*\rt),(5.5,1*\rt),(6.5,1*\rt)
				,
				(5,0*\rt),(6,0*\rt)
				,
				(5.5,-\rt)
			}
			{
				\lozv{\vl}{green!20!white}
			}
			\begin{scope}[shift={(28,0)}]
				\foreach \rl in
				{
					(21,8*\rt)
					,
					(20.5,7*\rt),(19.5,7*\rt)
					,
					(18,6*\rt),(19,6*\rt),(20,6*\rt)
					,
					(16.5,5*\rt),(17.5,5*\rt),(18.5,5*\rt),(19.5,5*\rt)
					,
					(15,4*\rt),(16,4*\rt),(17,4*\rt),(18,4*\rt),(19,4*\rt)
					,
					(13.5,3*\rt),(14.5,3*\rt),(15.5,3*\rt),(16.5,3*\rt),(17.5,3*\rt),(18.5,3*\rt)
					,
					(12,2*\rt),(13,2*\rt),(14,2*\rt),(15,2*\rt),(16,2*\rt),(17,2*\rt),(18,2*\rt)
					,
					(10.5,1*\rt),(11.5,1*\rt),(12.5,1*\rt),(13.5,1*\rt),(14.5,1*\rt),(15.5,1*\rt),(16.5,1*\rt),(17.5,1*\rt)
					,
					(9,0*\rt),(10,0*\rt),(11,0*\rt),(12,0*\rt),(13,0*\rt),(14,0*\rt),(15,0*\rt),(16,0*\rt),(17,0*\rt)
					, 
					(7.5,-\rt),(8.5,-\rt),(9.5,-\rt),(10.5,-\rt),(11.5,-\rt),(12.5,-\rt),(13.5,-\rt),(14.5,-\rt),(15.5,-\rt),(16.5,-\rt)
					,
					(6,-2*\rt),(7,-2*\rt),(8,-2*\rt),(9,-2*\rt),(10,-2*\rt),(11,-2*\rt),
					(12,-2*\rt),(13,-2*\rt),(14,-2*\rt),(15,-2*\rt),(16,-2*\rt)
				}
				{
					\lozr{\rl}{red!20!white}
				}
				\foreach \ll in
				{
					(1.5,9*\rt),(3.5,9*\rt),(5.5,9*\rt),(7.5,9*\rt),(9.5,9*\rt),(11.5,9*\rt),(13.5,9*\rt),
					(15.5,9*\rt),(17.5,9*\rt),(19.5,9*\rt),(21.5,9*\rt)
					,
					(2,8*\rt),(4,8*\rt),(6,8*\rt),(8,8*\rt),(10,8*\rt),(12,8*\rt),(14,8*\rt),(16,8*\rt),(18,8*\rt),
					(20,8*\rt)
					,
					(2.5,7*\rt),(4.5,7*\rt),(6.5,7*\rt),(8.5,7*\rt),(10.5,7*\rt),(12.5,7*\rt),(14.5,7*\rt),
					(16.5,7*\rt),(18.5,7*\rt)
					,
					(3,6*\rt),(5,6*\rt),(7,6*\rt),(9,6*\rt),(11,6*\rt),(13,6*\rt),(15,6*\rt),(17,6*\rt)
					,
					(3.5,5*\rt),(5.5,5*\rt),(7.5,5*\rt),(9.5,5*\rt),(11.5,5*\rt),(13.5,5*\rt),(15.5,5*\rt)
					,
					(4,4*\rt),(6,4*\rt),(8,4*\rt),(10,4*\rt),(12,4*\rt),(14,4*\rt)
					,
					(4.5,3*\rt),(6.5,3*\rt),(8.5,3*\rt),(10.5,3*\rt),(12.5,3*\rt)
					,
					(5,2*\rt),(7,2*\rt),(9,2*\rt),(11,2*\rt)
					,
					(5.5,1*\rt),(7.5,1*\rt),(9.5,1*\rt)
					,
					(6,0*\rt),(8,0*\rt)
					,
					(6.5,-\rt)
				}
				{
					\lozl{\ll}{yellow!20!white}
				}
				\foreach \vl in
				{
					(0,10*\rt),(2,10*\rt),(4,10*\rt),(6,10*\rt),(8,10*\rt),(10,10*\rt),
					(12,10*\rt),(14,10*\rt),(16,10*\rt),(18,10*\rt),(20,10*\rt),(22,10*\rt)
					,
					(0.5,9*\rt),(2.5,9*\rt),(4.5,9*\rt),(6.5,9*\rt),(8.5,9*\rt),(10.5,9*\rt),
					(12.5,9*\rt),(14.5,9*\rt),(16.5,9*\rt),(18.5,9*\rt),(20.5,9*\rt)
					,
					(1,8*\rt),(3,8*\rt),(5,8*\rt),(7,8*\rt),(9,8*\rt),(11,8*\rt),
					(13,8*\rt),(15,8*\rt),(17,8*\rt),(19,8*\rt)
					,
					(1.5,7*\rt),(3.5,7*\rt),(5.5,7*\rt),(7.5,7*\rt),(9.5,7*\rt),(11.5,7*\rt),
					(13.5,7*\rt),(15.5,7*\rt),(17.5,7*\rt)
					,
					(16,6*\rt),(2,6*\rt),(4,6*\rt),(6,6*\rt),(8,6*\rt),(10,6*\rt),
					(12,6*\rt),(14,6*\rt)
					,
					(14.5,5*\rt),(2.5,5*\rt),(4.5,5*\rt),(6.5,5*\rt),(8.5,5*\rt),(10.5,5*\rt),
					(12.5,5*\rt)
					,
					(13,4*\rt),(3,4*\rt),(5,4*\rt),(7,4*\rt),(9,4*\rt),(11,4*\rt)
					,
					(11.5,3*\rt),(3.5,3*\rt),(5.5,3*\rt),(7.5,3*\rt),(9.5,3*\rt)
					,
					(10,2*\rt),(4,2*\rt),(6,2*\rt),(8,2*\rt)
					,
					(4.5,1*\rt),(6.5,1*\rt),(8.5,1*\rt)
					,
					(5,0),(7,0)
					,
					(5.5,-\rt)
				}
				{
					\lozv{\vl}{green!20!white}
				}
			\end{scope}
		\end{tikzpicture}	
		\caption{%
			Lozenge tilings of minimal volume for the hexagon
			boundary conditions (left)
			and the regular density $\tfrac12$ top row (right).%
		}
	\label{fig:min_volume}
\end{figure}
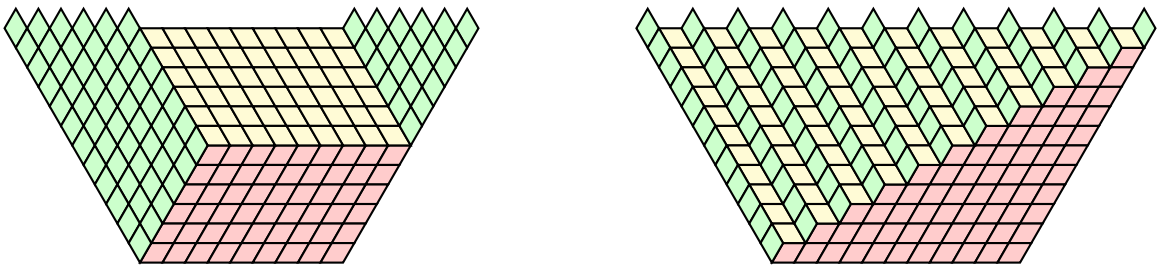

The situation in \Cref{fig:min_volume}, right,
is different in an essential way. 
Namely, for this top row the interlacing constraints allow much more configurations
with volume close to minimal, which suggests that the fluctuations 
of the vertical lozenges at finite distance from the bottom grow to infinity. 
This leaves open a possibility to have an ICLT
at some fluctuation scale $\ll \sqrt N$.
In terms of the weakly decreasing function $\mathsf{f}$ the
difference between the situations 
in the left and the right pictures in \Cref{fig:min_volume}
is in the value of $\mathsf{f}'(1)$ corresponding 
to the density of the vertical lozenges at the left end of the top row.
This leads to the following conjecture:

\begin{conjecture}
	Let Assumption \ref{ass:function} hold, and replace Assumption \ref{ass:q} 
	by $q=q(N)=e^{-\upgamma(N)/N}$, where $\upgamma(N)\to+\infty$, $\upgamma(N)\ll N$.
	If $\mathsf{f}'(1)<0$, then an analogue of \Cref{thm:main_result} holds
	under a suitable normalization determined by $\mathsf{u}(N)$ and
	$\upsigma^2(N)$ with 
	$
	\lim_{N\to\infty}\mathsf{u}(N)
	=
	\lim_{N\to\infty}\upsigma^2(N)=0
	$:
	\begin{equation*}
		\biggl\{ \frac{\lambda^r_j(N)-\mathsf{u}(N)N}{\upsigma(N)\sqrt{N}} \colon 1\le r\le K,\,1\le j\le r \biggr\}
		\to
		\left\{ \mathsf{L}^r_j\colon 1\le r\le K,\,1\le j\le r \right\}\sim
		\mathsf{GUE}_{K\times K}(1).
	\end{equation*}
\end{conjecture}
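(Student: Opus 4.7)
The plan is to repeat the steepest-descent analysis used for \Cref{thm:main_result} while allowing $\upgamma$ to depend on $N$ as $\upgamma=\upgamma(N)\to+\infty$ with $\upgamma(N)\ll N$, and to track how the resulting asymptotics degrade. The starting point, a prelimit contour-integral formula for joint moments of the bottommost lozenges coming from the Schur specialization $s_\nu(1,q,\ldots,q^{N-1})$, is an exact identity valid for any $q\in(0,1)$, and therefore remains available in this broader regime.

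The natural candidates for the normalization are the quantities $\mathsf{u}(N)$ and $\upsigma^2(N)$ obtained by substituting $\upgamma(N)$ for $\upgamma$ in \eqref{u_intro} and \eqref{sigma_square}. A Laplace-type computation as $\upgamma\to+\infty$ shows that $\int_0^1 e^{\upgamma(s-\mathsf{f}(s))}\,ds$ is dominated by a one-sided neighborhood of $s=1$, where under the assumption $\mathsf{f}'(1)<0$ one has $s-\mathsf{f}(s)\approx 1+(1-\mathsf{f}'(1))(s-1)$. This yields
\begin{equation*}
	\mathsf{u}(N)\sim \frac{\log(1-\mathsf{f}'(1))}{\upgamma(N)},
	\qquad
	\upsigma^2(N)\sim \frac{-\mathsf{f}'(1)}{2\upgamma(N)},
\end{equation*}
both of order $\upgamma(N)^{-1}$ and hence vanishing, while the fluctuation scale $\upsigma(N)\sqrt N\sim\sqrt{N/\upgamma(N)}$ satisfies $1\ll\upsigma(N)\sqrt N\ll\sqrt N$ thanks to the hypothesis $1\ll\upgamma(N)\ll N$.

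Next, I would re-examine the critical-point analysis of \Cref{sec:asymptotic_mean,sec:critical_points}. The phase function in the contour integral is proportional to $\upgamma(N)$, so the location of the relevant saddle point, the value of the Hessian, and the appropriate deformed contour all scale nontrivially with $\upgamma(N)$. After the rescaling $(\lambda^r_j-\mathsf{u}(N)N)/(\upsigma(N)\sqrt N)$, the local Gaussian contribution at the saddle should produce variance $1$ at the single-level marginal, and the nested-integral structure that yields the GUE corners joint density in the proof of \Cref{thm:main_result} should go through unchanged, giving $\mathsf{GUE}_{K\times K}(1)$ in the limit.

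The main obstacle will be making the steepest-descent argument uniform as $\upgamma(N)\to+\infty$. In the fixed-$\upgamma$ setting the saddle points of the phase and the singularities of the integrand sit at mutually bounded distances, whereas now both migrate as $\upgamma(N)$ grows and the effective gap between them shrinks; one must carefully track the geometry of the contour together with a tail estimate showing that the portion of the top-row profile macroscopically away from $s=1$ contributes negligibly. The hypothesis $\mathsf{f}'(1)<0$ enters precisely at this step: it ensures a genuine Laplace regime at $s=1$ with nonvanishing leading coefficient. If instead $\mathsf{f}'(1)=0$, as for polygonal boundary conditions, the Laplace approximation degenerates, the scale $\upsigma(N)$ acquires a different decay rate, and the heuristic discussion accompanying \Cref{fig:min_volume} suggests that no ICLT at a finite distance from the bottom row can hold.
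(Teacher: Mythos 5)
The statement you were asked to prove is stated in the paper as a \emph{conjecture}: the paper offers no proof of it, only the heuristic discussion in \Cref{sub:slower_q} (formal substitution of $\upgamma\to+\infty$ into \eqref{u_intro}--\eqref{sigma_square} for two sample profiles, plus a qualitative comparison of the two minimal-volume tilings in \Cref{fig:min_volume}). Your proposal is best read as an expanded and somewhat sharpened version of that same heuristic. Your general asymptotics $\mathsf{u}(N)\sim\upgamma(N)^{-1}\log(1-\mathsf{f}'(1))$ and $\upsigma^2(N)\sim -\mathsf{f}'(1)/(2\upgamma(N))$ are correct Laplace-method consequences of \eqref{u_intro}--\eqref{sigma_square} when $\mathsf{f}$ is differentiable at $s=1$ with $\mathsf{f}'(1)<0$, and they reproduce the paper's computation for $\mathsf{f}_2(s)=1-s$ (namely $\mathsf{u}_2\sim\log 2/\upgamma$, $\upsigma_2^2\sim 1/(2\upgamma)$); your remark that the approximation degenerates when $\mathsf{f}'(1)=0$ likewise matches the paper's hexagon discussion. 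In this sense you have correctly identified the intended normalization and the role of the hypothesis $\mathsf{f}'(1)<0$.

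However, this is not a proof, and the step you defer as ``the main obstacle'' is precisely the content of the conjecture. Concretely: (i) the saddle-point analysis of \Cref{sec:critical_points,sec:completing_proof} is carried out for fixed $\upgamma$, and several of its ingredients visibly break down uniformly in $\upgamma(N)\to\infty$ --- the other critical points and the singularities of the integrand live in $[e^{-\upgamma(\mathsf{u}+1)},e^{-\upgamma(\mathsf{u}-\mathsf{f}(0))}]$, which collapses toward the saddle $w_{\mathsf{cr}}=0$ at rate $e^{-\upgamma(N)}$, so one must show that the Gaussian window of width $(NS''(0))^{-1/2}$ shrinks strictly faster and that $\Re(S(w)-S(0))$ stays uniformly negative on the deformed contour outside it; (ii) the Riemann-sum and Taylor-expansion error bounds of \Cref{sub:level_1_CLT,sub:K_level_CLT} are $O(\sqrt N)$ or $O(N^{-1/2})$ relative to a fluctuation scale $\sqrt N$, whereas the conjecture requires resolution at the smaller scale $\upsigma(N)\sqrt N$, so Assumption \ref{ass:function} itself would need to be strengthened (the tolerance $o(N^{-1/2})$ on $f_N-\mathsf{f}$ is no longer obviously sufficient) and some regularity of $\mathsf{f}$ near $s=1$ beyond piecewise continuity must be imposed for your Laplace expansion to be legitimate; (iii) the passage from single-level marginals to the joint GUE corners law via the $q$-Gibbs property in \Cref{sub:Gibbs} must be re-examined at the new scale. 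None of these is carried out in your proposal, so what you have is a plausible program --- essentially the one the authors gesture at --- rather than a proof; the statement remains open.
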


\subsection{Background and methods}
\label{sub:previous}

Let us discuss previous work on ICLT in random lozenge tilings 
and how our approach differs from the existing ones.

The ICLT has been proven for various ensembles of 
random
domino and lozenge tilings
starting from
\cite{Johansson2005arctic},
\cite{OkounkovReshetikhin2006RandomMatr},
\cite{johansson2006eigenvalues}, 
\cite{johansson2006eigenvaluesErratum}.
In particular, 
\cite{johansson2006eigenvalues} essentially establishes
an ICLT for uniformly random lozenge tilings of the hexagon
(see also \cite{Nordenstam2009Thesis} for more details and for a discussion of ICLT for domino tilings),
and 
\cite{OkounkovReshetikhin2006RandomMatr}
considers an ensemble of $q^{\mathsf{vol}}$-weighted random lozenge tilings of certain infinite regions.
The latter paper
also informally argues that 
the ICLT should hold for
Gibbs (in particular, uniform) ensembles
of random lozenge tilings at points
where two frozen regions meet (such points are sometimes called turning points),
see also \cite{borodinDuits2011GUE} for an ICLT for a Gibbs measure on tilings of an infinite region.
Here by a Gibbs property we mean that for each $K\ge1$
the distribution of levels $1,\ldots,K-1 $
of an interlacing array
(discrete or continuous) 
conditioned on fixed configuration at level $K$ is uniform 
over all configurations subject to interlacing.
In non-Gibbs situation the limit at turning points 
might not be the universal GUE corners process
but rather its certain splitting, cf. \cite{Mkrtchyan2014Periodic}.

The ICLT results cited in the previous paragraph rely on the presence of
explicit determinantal correlation kernels of the pre-limit models suitable for
asymptotic analysis.
Such kernels can be extracted from the formalism of Schur processes
\cite{okounkov2003correlation}, \cite{Okounkov2005}, \cite{borodin2005eynard}
for $q^{\mathsf{vol}}$-weighted or periodically weighted random tilings of
infinite regions; 
via connections with orthogonal polynomials \cite{johansson2006eigenvalues},
\cite{borodin-gr2009q} for $q^{\mathsf{vol}}$-weighted tilings of the hexagon;
or by an application of Eynard--Mehta type results
\cite{Petrov2012}, \cite{duse2015asymptotic}
for uniformly random tilings of general sawtooth domains.
The latter method also produces a determinantal correlation
kernel for the
$q^{\mathsf{vol}}$-weighted tilings of general sawtooth domains 
\cite{Petrov2012}, but it has a more complicated structure than in the uniform
case, which so far has presented an impediment to its asymptotic analysis.

\medskip

Recently other methods of proving ICLT (and other asymptotic results)
for uniformly random lozenge tilings of general sawtooth domains 
as in 
\Cref{ass:function}
were developed in 
\cite{GorinPanova2012_full} and
\cite{novak2015lozenge}.
The approach in the former paper
is based on Schur generating functions
and allows to also consider tilings with free boundary
\cite{Panova2014Free}
and to establish an ICLT for
alternating sign matrices
\cite{gorin2014alternating}.
The methods of \cite{novak2015lozenge} are
based on combinatorics of Hurwitz numbers.
These more recent methods are related to free probability and go beyond the
ICLT leading to limit shape results
\cite{GorinBufetov2013free},
\cite{Collins2016}
(both approaches)
and asymptotics of global fluctuations
\cite{bufetov2016fluctuations}
(the Schur generating functions approach).

\medskip

Our approach to the ICLT for $q^{\mathsf{vol}}$-weighted lozenge
tilings of general sawtooth domains (\Cref{thm:main_result})
is closer to methods relying on determinantal kernels.
However, while the determinantal structure
of the measure $q^{\mathsf{vol}}$ obtained in \cite{Petrov2012}
is quite complicated, the distribution of the vertical lozenges
at each $K$-th level of the interlacing array
can be written as a much simpler $K\times K$ determinant
\cite{Petrov2012GT} with matrix elements
expressed as single contour integrals. 
This is the starting point of our work, and the asymptotic
analysis of the single contour integrals yields our ICLT.

\subsection{Outline}
\label{sub:outline}

In \Cref{sec:prelimit_formula}
we recall the $K\times K$ determinantal formula from \cite{Petrov2012GT} for 
the distribution of the vertical lozenges at the $K$th level of the interlacing
array.
In \Cref{sec:asymptotic_mean} we perform a preliminary asymptotic computation
at the first ($K=1$) level
to determine the global location $\mathsf{u}$ \eqref{u_intro} 
of the vertical lozenges.
In \Cref{sec:critical_points} we 
analyze the critical points and the steepest descent contour of the 
integral entering the distribution of the vertical lozenges at the $K$th level.
In \Cref{sec:completing_proof} we compute the asymptotics
of the distribution of the vertical lozenges at any $K$th level
(which are dominated by the behavior in a neighborhood of a critical point),
and complete the proof of \Cref{thm:main_result}.

\subsection{Acknowledgments}

We are grateful to conversations with Vadim Gorin, Jonathan Novak, and
Greta Panova. 
SM was partially supported by the Simons Foundation Collaboration
Grant No. 422190.

\section{Projection of $P_{q,\nu}^N$ onto the $K$th row}
\label{sec:prelimit_formula}

The starting point of our analysis is a formula for the projection of our measure
$P^N_{q,\nu}$ onto any fixed row $K$, i.e., a formula for the 
distribution of the particles $\lambda^K_j$, $j=1,\ldots,K $,
under $P^{N}_{q,\nu}$ for any fixed $1\le K<N$.
Throughout the paper we use the notation
\begin{equation*}
(a;q)_{m}=(1-a)(1-aq)\ldots(1-aq^{m-1}),\qquad  m=1,2,\ldots
\end{equation*}
(with $(a;q)_{0}=1$), 
for the $q$-Pochhammer symbol.

\begin{theorem}[{\cite[Theorem 1.5]{Petrov2012GT}}]
	\label{thm:Pet14}
	For any $1\le K<N$, the distribution of the $K$th row under $P^{N}_{q,\nu}$ has the following
	determinantal form:
	\begin{equation}
		\label{formula_Pet14}
		\begin{split}
			&
			P^{N}_{q,\nu}\left( \lambda^K_j=\varkappa_j,\;j=1,\ldots,K  \right)
			\\&\hspace{40pt}=
			s_{\varkappa}(1,q,\ldots,q^{K-1})\,
			(-1)^{K(N-K)}
			q^{(N-K)|\varkappa|}
			q^{-K(N-K)(N+2)/2}
			\Det_{i,j=1}^{K}[ A_{i}(\varkappa_j-j)],
		\end{split}
	\end{equation}
	where $\varkappa_1\ge\ldots\ge\varkappa_K $ are fixed integers, 
	$s_{\varkappa}$ is a Schur polynomial 
	(%
		see \eqref{Schur_q_specialization} for an explicit product formula
		for 
		$s_{\varkappa}(1,q,\ldots,q^{K-1})$%
	),
	and $A_i(x)$, $i=1,\ldots,K $, $x\in\mathbb{Z}$, are the following functions:
	\begin{align}\label{qA_i}
		A_i(x)= A_i(x\mid K,N,\nu):=
		\frac{1-q^{N-K}}{2\pi\mathbf{i}}\oint_{C(x)}dz\,
		\frac{(zq^{1-x};q)_{N-K-1}}
		{\prod_{r=i}^{N-K+i}(z-q^{-r})}
		\prod_{r=1}^{N}\frac{z-q^{-r}}{z-q^{\nu_r-r}}.
	\end{align}
	Here the positively (counter-clockwise) oriented simple contour $C(x)$
	encircles points $q^{x},q^{x+1},\ldots,q^{\nu_1-1}$, and not the possible
	poles $q^{x-1},q^{x-2},\ldots,q^{\nu_N-N}$.
\end{theorem}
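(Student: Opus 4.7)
My plan is to derive the formula in two stages: first, splitting the $q^{\mathsf{vol}}$ sum along level $K$ via the branching rule for Schur polynomials, and then recasting the resulting ratio of (skew) Schur specialisations as a $K\times K$ determinant of single contour integrals. For the first stage, the combinatorial definition of $s_\lambda$ as a sum over Gelfand--Tsetlin patterns, principally specialised at $(1,q,\ldots,q^{N-1})$, combined with the same regrouping of $\mathsf{vol}(\boldsymbol\lambda)=\sum_{k=1}^{N-1}|\lambda^k|$ that underlies the product formula for $Z^N_{q,\nu}$, gives the factorisation
\begin{equation*}
\sum_{\substack{\boldsymbol\lambda:\ \lambda^K=\varkappa\\ \lambda^N=\nu}} q^{\mathsf{vol}(\boldsymbol\lambda)} = s_{\varkappa}(1,q,\ldots,q^{K-1})\cdot q^{(N-K)|\varkappa|}\cdot s_{\nu/\varkappa}(1,q,\ldots,q^{N-K-1}).
\end{equation*}
Dividing by $Z^N_{q,\nu}=s_\nu(1,q,\ldots,q^{N-1})$ reduces the task to the identity
\begin{equation*}
\frac{s_{\nu/\varkappa}(1,q,\ldots,q^{N-K-1})}{s_\nu(1,q,\ldots,q^{N-1})}=(-1)^{K(N-K)}\,q^{-K(N-K)(N+2)/2}\,\det_{i,j=1}^{K}[A_i(\varkappa_j-j)].
\end{equation*}

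For the second stage, I would produce the determinantal right-hand side by writing $s_\nu(1,q,\ldots,q^{N-1})$ via the Weyl bialternant formula and performing a Laplace expansion of the $N\times N$ antisymmetric numerator along the first $K$ columns (corresponding to the variables $1,q,\ldots,q^{K-1}$). The minors appearing in this expansion split into a piece matching the bialternant for $s_\varkappa(1,q,\ldots,q^{K-1})$ and a complementary piece pairing with $s_{\nu/\varkappa}(1,q,\ldots,q^{N-K-1})$. A Cauchy--Binet type contraction of the remaining sum over $K$-subsets of $[N]$ collapses into a single $K\times K$ determinant with rational $(i,j)$-entries whose only poles in $z$ lie at the points $q^{\nu_r-r}$. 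Each such entry is then recognised, via residue calculus, as the contour integral $A_i(\varkappa_j-j)$: the factor $\prod_{r=1}^N(z-q^{-r})/(z-q^{\nu_r-r})$ encodes the top-row data; the Pochhammer $(zq^{1-x};q)_{N-K-1}$ is the principal specialisation at $(1,q,\ldots,q^{N-K-1})$ expressed as a polynomial in $z$; and the denominator $\prod_{r=i}^{N-K+i}(z-q^{-r})$ records the row-index-$i$ dependence inherited from the Laplace expansion. The contour $C(x)$ is chosen to enclose $q^x,\ldots,q^{\nu_1-1}$ and exclude $q^{x-1},\ldots,q^{\nu_N-N}$ so that the residue sum matches the corresponding post-Cauchy--Binet matrix entry.

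The main obstacle is bookkeeping. The sign $(-1)^{K(N-K)}$ arises from the shuffle required to decouple the two column blocks in the Laplace expansion, together with the antisymmetrisations hidden in the Weyl formulae. The $q$-power $q^{-K(N-K)(N+2)/2}$ accumulates from the shifts $q^{k-1}$ used to normalise the two Vandermonde denominators and from the shifts by $\varkappa_j-j$ in each determinant entry. I would first verify the base case $K=1$ explicitly, where the determinant collapses to the single integral $A_1(\varkappa_1-1)$ and the identity reduces to a standard $q$-binomial evaluation, and cross-check at $K=N-1$, where $\nu/\varkappa$ is a horizontal strip and both sides have elementary closed forms. A final sanity check is the normalisation $\sum_\varkappa P^N_{q,\nu}(\lambda^K=\varkappa)=1$, which imposes a nontrivial $q$-summation identity on $\det[A_i(\varkappa_j-j)]$ and would immediately expose any sign or $q$-power error.
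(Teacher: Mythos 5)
This theorem is not proved in the paper at all: it is imported verbatim from \cite[Theorem 1.5]{Petrov2012GT}, so the paper's ``own proof'' is a citation and there is nothing internal to compare your argument against. Judged on its own merits, your first stage is correct and is indeed the standard first reduction behind the cited result: using the Gelfand--Tsetlin combinatorial formula for Schur polynomials together with homogeneity, one checks (splitting $\mathsf{vol}=\sum_{k<K}|\lambda^k|+|\varkappa|+\sum_{k>K}|\lambda^k|$, summing the lower and upper halves separately) that
\begin{equation*}
\sum_{\boldsymbol\lambda:\ \lambda^K=\varkappa,\ \lambda^N=\nu} q^{\mathsf{vol}(\boldsymbol\lambda)}
= s_{\varkappa}(1,q,\ldots,q^{K-1})\, q^{(N-K)|\varkappa|}\, s_{\nu/\varkappa}(1,q,\ldots,q^{N-K-1}),
\end{equation*}
which matches the prefactor of \eqref{formula_Pet14} and reduces the theorem to the determinantal evaluation of $s_{\nu/\varkappa}(1,\ldots,q^{N-K-1})/s_{\nu}(1,\ldots,q^{N-1})$.

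Your second stage, however, has a genuine gap precisely where the content of the theorem lies. The Laplace expansion of the $N\times N$ alternant for $s_\nu$ along its first $K$ columns yields a signed sum over the $\binom{N}{K}$ subsets $I\subset\{1,\ldots,N\}$, and the $K\times K$ minors that appear are alternants for the signatures read off from $\{\nu_i+N-i\}_{i\in I}$ --- a finite list of special signatures, not the generic interlacing $\varkappa$ whose marginal probability you need. To extract the coefficient of $s_{\varkappa}(1,\ldots,q^{K-1})$ for an \emph{arbitrary} interlacing $\varkappa$ you must invert the branching expansion (an inverse-Kostka-type step); this is exactly the nontrivial identity being proved, and your ``Cauchy--Binet type contraction'' as described would simply reassemble $s_\nu$ rather than isolate the fixed-$\varkappa$ term. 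Likewise, the identification of the resulting entries with $A_i(\varkappa_j-j)$ --- in particular the provenance of the denominator $\prod_{r=i}^{N-K+i}(z-q^{-r})$ and the precise contour prescription separating $q^{x},\ldots,q^{\nu_1-1}$ from $q^{x-1},\ldots,q^{\nu_N-N}$ --- is asserted rather than derived, and these are the delicate points (the source establishes the formula by a different, direct argument). Your proposed sanity checks ($K=1$, normalization) are good, but as written the proposal is a roadmap whose second half does not close; for this paper the appropriate justification remains the reference to \cite{Petrov2012GT}.
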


Let us rewrite the functions
$A_i(x)$ in a form more convenient for
asymptotic analysis:
\begin{proposition}
	\label{proposition:qA_i_best}
	We have
	\begin{equation}
		\label{qA_i_best}
		A_i(x)=
		\frac{q^{x}(-1)^{N-K-1}
		(1-q^{N-K})}{2\pi\mathbf{i}}
		\oint_{C'}dw
		\frac{\prod_{r=1}^{N-K-1}(q^r-w)}
		{\prod_{r=1}^N (q^{x}-wq^{\nu_r-r})}
		\prod_{r\in I_{i,N,K}}(q^{x}-wq^{-r}),
	\end{equation}
	where $I_{i,N,K}:=\left\{ 1,\ldots,i-1  \right\}\cup\left\{ N-K+i+1,\ldots,N  \right\}$
	and the contour $C'$ is a positively oriented circle crossing the real line
	to the left of $0$ and between $q$ and $1$.
\end{proposition}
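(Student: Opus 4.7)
The plan is to perform the Möbius change of variable $z = q^x/w$ in the integral \eqref{qA_i} and simplify algebraically. This substitution is natural because (i) it turns each factor $z - q^{-r}$ into $w^{-1}(q^x - wq^{-r})$ and similarly $z - q^{\nu_r - r}$ into $w^{-1}(q^x - wq^{\nu_r-r})$, producing exactly the factors in the target, and (ii) it collapses the $q$-Pochhammer $(zq^{1-x};q)_{N-K-1}$ into $(q/w;q)_{N-K-1} = \prod_{r=1}^{N-K-1}(1-q^r/w)$, which becomes a polynomial in $w$ after clearing denominators.

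First, I would track the algebraic simplification. The Jacobian gives $dz = -q^x w^{-2}\,dw$; the Pochhammer contributes $(-1)^{N-K-1} w^{-(N-K-1)} \prod_{r=1}^{N-K-1}(q^r - w)$; the $N$ denominator factors $z - q^{\nu_r-r}$ and the $N-K+1$ denominator factors $z - q^{-r}$ (for $r = i,\ldots,N-K+i$) contribute $w^{-N}$ and $w^{-(N-K+1)}$ respectively; the $N$ numerator factors $z - q^{-r}$ contribute $w^{-N}$, and the subset indexed by $\{i,\ldots,N-K+i\}$ cancels the corresponding denominator factors, leaving the product over $I_{i,N,K}$. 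Collecting the net exponent of $w$ gives $-(N-K-1) - N + (N-K+1) + N = 2$, which cancels with the $w^{-2}$ from the Jacobian and leaves a clean rational function.

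The main obstacle, and the step requiring the most care, is identifying the correct image of the contour. Under $\phi^{-1}(z) = q^x/z$, the positively oriented simple loop $C(x)$ (enclosing $q^x, q^{x+1}, \ldots, q^{\nu_1-1}$ but neither $0$ nor $q^{x-1}, \ldots, q^{\nu_N-N}$) is sent to a positively oriented simple loop $\gamma$ in the $w$-plane, enclosing the points $1, q^{-1}, \ldots, q^{x-\nu_1+1}$. This follows because $\phi^{-1}$ is holomorphic on the Riemann sphere, hence orientation-preserving, and swaps $\{0, \infty\}$, both of which lie in the exterior of $C(x)$. However, $\gamma$ is \emph{not} the contour $C'$ stated in the proposition; on the contrary, $C'$ encircles the \emph{complementary} set of poles.

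To bridge $\gamma$ and $C'$, I would invoke the residue theorem. The new integrand has finite poles only at $w = q^{x+r-\nu_r}$ for $r = 1,\ldots,N$, splitting into a ``large'' set ($w \ge 1$, corresponding to $\nu_r - r \ge x$) enclosed by $\gamma$, and a ``small'' set (in $(0, q]$, corresponding to $\nu_r - r < x$) enclosed by $C'$. A degree count shows the integrand is $O(w^{-2})$ at infinity, so there is no residue at $\infty$, and a direct evaluation shows no pole at $w = 0$. Hence the sum of all finite residues vanishes and $\oint_\gamma = -\oint_{C'}$. Combining the three sign contributions—the $-1$ from the Jacobian, the $(-1)^{N-K-1}$ from the Pochhammer, and the $-1$ from $\oint_\gamma = -\oint_{C'}$—gives the overall $(-1)^{N-K+1} = (-1)^{N-K-1}$ in \eqref{qA_i_best}. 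The geometric description of $C'$ (crossing the real axis in $(q,1)$ on the right and to the left of $0$) is precisely what ensures $C'$ encloses the small poles in $(0, q]$ but excludes $1$ and the large poles, completing the identification.
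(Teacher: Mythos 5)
Your argument is correct and follows essentially the same route as the paper: the substitution $z=q^{x}w^{-1}$ and the ensuing algebraic bookkeeping (powers of $w$ cancelling, the $(-1)^{N-K-1}$ from the Pochhammer, the cancellation of the factors indexed by $r=i,\dots,N-K+i$) are exactly the paper's computation. The only place the two treatments diverge is the identification of the contour: the paper uses the freedom to choose $C(x)$ so that it \emph{does} encircle $0$ (legitimate, since $0$ is not a pole of the integrand in \eqref{qA_i}), whence the image under inversion is directly $C'$ with reversed orientation; you instead take $C(x)$ not encircling $0$, land on a loop $\gamma$ around the poles $w=q^{x-\nu_r+r}\ge 1$, and pass to $C'$ by noting that the integrand is $O(w^{-2})$ at infinity and regular at $w=0$, so the total residue vanishes and $\oint_{\gamma}=-\oint_{C'}$. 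Both versions are valid and produce the same overall sign; yours is a bit longer but makes fully explicit why $C'$, which encircles the complementary set of poles, is the correct contour.
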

\begin{proof}
	Change the variables in \eqref{qA_i} as $z=q^{x}w^{-1}$.
	Then $w^{-1}$ is integrated over a positively oriented circle 
	encircling $1,q,q^2,\ldots $ and not $q^{-1},q^{-2},\ldots $.
	That is, the integration contour for $w^{-1}$ encircles the segment $[0,1]$.
	Therefore, we can choose the contour $C'$ for $w$ as in the claim, and 
	note that there is an additional change of sign coming 
	from the orientation of the contour. 
	Thus,
	\begin{equation*}
		A_i(x)=
		\frac{q^x(1-q^{N-K})}{2\pi\mathbf{i}}\oint_{C'}\frac{dw}{w^2}
		\frac{(q/w;q)_{N-K-1}}
		{\prod_{r=i}^{N-K+i}(q^x/w-q^{-r})}
		\prod_{r=1}^{N}\frac{q^x/w-q^{-r}}{q^x/w-q^{\nu_r-r}}.
	\end{equation*}
	Rewriting the products under the integral, we get \eqref{qA_i_best}.
\end{proof}

\section{Global location $\mathsf{u}$ from first level asymptotics}
\label{sec:asymptotic_mean}

According to the desired claim of \Cref{thm:main_result},
under the measures $P^{N}_{q,\nu}$ and
for any fixed $K$ the random locations
of the vertical lozenges 
$\begin{tikzpicture}
	[scale=.17,very thick]
	\def\rt{0.866025}
	\lozv{(0,0)}{white}
\end{tikzpicture}$
on the $K$th row
should behave as\footnote{%
	Here and below for simplicity we omit the notation of the integer part,
	and write, e.g., 
	$\lambda^{K}_j=\mathsf{u}N+\mathsf{L}^K_j\sqrt N$
	instead of
	$\lambda^{K}_j=\lfloor\mathsf{u}N+\mathsf{L}^K_j\sqrt N\rfloor$.
	The presence of the integer parts does not affect the 
	asymptotics.
} 
$\lambda^{K}_j=\mathsf{u}N+\mathsf{L}^K_j\sqrt N$ as $N\to+\infty$
(note that $\mathsf{u}$ does not depend on $K$).
Here
$\mathsf{L}^K=(\mathsf{L}^{K}_1\ge \ldots\ge \mathsf{L}^K_K)$, $\mathsf{L}^K_j\in\mathbb{R}$,
are the rescaled locations which we will later identify with the 
eigenvalues of the $K\times K$ GUE random matrix (see \Cref{def:GUE}).
In this section we perform a preliminary computation for 
the simplest case $K=1$
which will help determine the correct global asymptotic location $\mathsf{u}$.

When $K=1$, the projection formula of 
\Cref{sec:prelimit_formula} becomes
\begin{equation}
	\label{integral_formula_for_K1}
	\begin{split}
		P^{N}_{q,\nu}
		\left( \lambda^1_1=x+1 \right)
		&=
		(-1)^{N-1}q^{(N-1)(x+1)}q^{-\frac{(N-1)(N+2)}2}
		A_1(x)
		\\&=
		-
		q^{-\frac{N(N-2x-1)}2}
		\frac{(1-q^{N-1})}{2\pi\mathbf{i}}
		\oint_{C'}dw
		\frac{\prod_{r=1}^{N-2}(q^r-w)}
		{\prod_{r=1}^N (q^{x}-wq^{\nu_r-r})}.
	\end{split}
\end{equation}
Since $x$ should depend on $N$ as $x=\mathsf{u}N+\xi\sqrt N$, 
we have from Assumption \ref{ass:q}:
\begin{equation}
	\label{integral_formula_for_K1_prefactor_behavior}
	-
	q^{-\frac{N(N-2x-1)}2}
	(1-q^{N-1})=
	-e^{-N\upgamma(\mathsf{u}-1/2)+O(\sqrt N)}
	(1-e^{-\upgamma}+O(1/N)).
\end{equation}
Therefore, the exponential in $N$ terms coming from the 
asymptotics of the contour integral should compensate 
$e^{-N\upgamma(u-1/2)}$.

Let us now consider the 
exponential in $N$ behavior
of 
the terms under the integral in \eqref{integral_formula_for_K1}.
The numerator behaves as
\begin{equation}
	\label{K1_int_expon1}
	\prod_{r=1}^{N-2}(q^r-w)
	=
	\exp
	\left\{ N\sum_{r=1}^{N-2}\frac{1}{N}\log
	\left( e^{-\upgamma\frac{r}{N}} -w\right) \right\}
	=
	\exp
	\left\{ N\int_0^1\log\left( e^{-\upgamma s}-w \right)ds+O(1) \right\},
\end{equation}
where in the second equality we used the approximation
of the integral by the corresponding Riemann sum (which has error $O(1/N)$).
Here and below we take $\log$ to be the standard branch having 
the cut along the negative real axis. 
Later in the course of our analysis it will become evident that
the integral over the part of the contour $C'$ 
around the branch cut (i.e., $e^{-\upgamma s}-w<0$)
is asymptotically negligible. 
This justifies our choice of the branch.

For the denominator in \eqref{integral_formula_for_K1} we have
\begin{align}
	\nonumber
	\prod_{r=1}^{N}(q^x-wq^{\nu_r-r})
	&=
	\exp
	\biggl\{ N\sum_{r=1}^{N}
	\log\left( 
		e^{-\upgamma\mathsf{u}-\frac{\upgamma\xi}{\sqrt N}} 
		-
		w e^{-\upgamma \frac{\nu_r}{N}+\upgamma\frac{r}{N}}
	\right)\biggr\}
	\\
	\label{K1_int_expon2}
	&=
	\exp
	\biggl\{ N\sum_{r=1}^{N}
	\log\left( 
		e^{-\upgamma\mathsf{u}} 
		-
		w e^{-\upgamma \mathsf{f}(\frac{r}{N})+\upgamma\frac{r}{N}}
	\right)
	+O(\sqrt N)\biggr\}
	\\&=
	\exp
	\biggl\{ 
		N\int_0^1\log\left( e^{-\upgamma\mathsf{u}}
		-we^{\upgamma(s-\mathsf{f}(s))}\right)ds
		+
		O(\sqrt N)
	\biggr\}.
	\nonumber
\end{align}
Having \eqref{K1_int_expon1}--\eqref{K1_int_expon2}, let us denote
\begin{equation}
	\label{S_action_function_definition}
	S(w)
	:=
	\int_0^1\log
	\left( e^{-\upgamma s}-w \right)ds
	-
	\int_0^1\log
	\left( e^{-\upgamma\mathsf{u}}
	-we^{\upgamma(s-\mathsf{f}(s))}\right)ds.
\end{equation}
Therefore, the integral in \eqref{integral_formula_for_K1}
takes the form
\begin{equation}\label{O_sqrt_N_part}
	\frac{1}{2\pi\mathbf{i}}
	\int_{C'} e^{NS(w)+O(\sqrt N)}dw.
\end{equation}
The asymptotic behavior of such integrals can be analyzed
using the steepest descent (also called Laplace)
method.
Namely, one finds a critical point $w_{\mathsf{cr}}$ 
(in our case this indeed will be a single critical point)
and a deformation
of the integration contour $C'$
so that the deformed contour $\mathfrak{C}$ passes
through $w_{\mathsf{cr}}$ and is a steepest descent contour for $S(w)$
in the sense that 
$\Re\left( S(w)-S(w_{\mathsf{cr}}) \right)< 0$, 
$w\in \mathfrak{C}\setminus\left\{ w_{\mathsf{cr}} \right\}$.
Then 
\begin{equation*}
	\frac{1}{2\pi\mathbf{i}}
	\int_{C'} e^{NS(w)+O(\sqrt N)}dw
	=
	\frac{e^{NS(w_{\mathsf{cr}})}}{2\pi\mathbf{i}}
	\int_{C'} e^{N\left( S(w)-S(w_{\mathsf{cr}}) \right)+O(\sqrt N)}dw,
\end{equation*}
so the exponential in $N$ terms in the integral have the form
$e^{NS(w_{\mathsf{cr}})}$.
Comparing this to \eqref{integral_formula_for_K1_prefactor_behavior} we see that
one should find $\mathsf{u}$ and $w_{\mathsf{cr}}$ such that
\begin{equation}
	\label{u_wcr_equations}
	S'(w_{\mathsf{cr}})=0,\qquad 
	S(w_{\mathsf{cr}})=\upgamma\left( \mathsf{u}-\frac{1}{2} \right).
\end{equation}
The next lemma provides one such pair $(\mathsf{u},w_{\mathsf{cr}})$ which 
will turn out to be the right one for asymptotics.
\begin{lemma}
	\label{lemma:S_at_0}
	We have $S(0)=\upgamma(\mathsf{u}-1/2)$ and 
	\begin{equation*}
		S'(0)=-\int_0^1 e^{\upgamma s}ds+e^{\upgamma \mathsf{u}}\int_0^1 e^{\upgamma(s-\mathsf{f}(s))}ds.
	\end{equation*}
\end{lemma}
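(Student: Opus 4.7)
The plan is straightforward direct calculation from the definition \eqref{S_action_function_definition}. Both claims amount to evaluating an integrand (and its $w$-derivative) at $w=0$, where the logarithms simplify to linear functions of $s$ and $\mathsf{u}$.

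For $S(0)$, I would substitute $w=0$ into the two integrands of \eqref{S_action_function_definition}. The first becomes $\log(e^{-\upgamma s})=-\upgamma s$ and the second becomes $\log(e^{-\upgamma\mathsf{u}})=-\upgamma\mathsf{u}$. Integrating each over $s\in[0,1]$ and subtracting gives $S(0)=-\upgamma/2-(-\upgamma\mathsf{u})=\upgamma(\mathsf{u}-\tfrac12)$, which is the first claim.

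For $S'(0)$, I would differentiate under the integral sign. Using $\partial_w\log(a-w)=-(a-w)^{-1}$ on the first integrand and $\partial_w\log(b-cw)=-c(b-cw)^{-1}$ with $b=e^{-\upgamma\mathsf{u}}$, $c=e^{\upgamma(s-\mathsf{f}(s))}$ on the second, one obtains
$$S'(w)=-\int_0^1\frac{ds}{e^{-\upgamma s}-w}+\int_0^1\frac{e^{\upgamma(s-\mathsf{f}(s))}\,ds}{e^{-\upgamma\mathsf{u}}-w e^{\upgamma(s-\mathsf{f}(s))}}.$$
Setting $w=0$ and using $1/e^{-\upgamma s}=e^{\upgamma s}$ together with $e^{\upgamma(s-\mathsf{f}(s))}/e^{-\upgamma\mathsf{u}}=e^{\upgamma\mathsf{u}}e^{\upgamma(s-\mathsf{f}(s))}$ yields exactly the claimed expression for $S'(0)$.

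The only step requiring a brief justification is the interchange of differentiation and integration. By Assumption \ref{ass:function} the function $\mathsf{f}$ is piecewise continuous on $[0,1]$, hence bounded, so $e^{\upgamma(s-\mathsf{f}(s))}$ is bounded on $[0,1]$, and the denominators $e^{-\upgamma s}-w$ and $e^{-\upgamma\mathsf{u}}-we^{\upgamma(s-\mathsf{f}(s))}$ stay uniformly bounded away from $0$ for $w$ in a small real neighborhood of $0$. Thus the integrands and their $w$-derivatives admit a uniform bound in $s$, and dominated convergence justifies the interchange. There is no genuine obstacle here; the lemma is purely computational and its role is to identify the candidate pair $(\mathsf{u},w_{\mathsf{cr}})=(\mathsf{u},0)$ for \eqref{u_wcr_equations}. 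The substantive work, deferred to later sections, is verifying that $S'(0)=0$ holds (this matches the defining relation \eqref{u_intro} for $\mathsf{u}$) and that the original contour $C'$ can be deformed into a steepest descent contour through $w_{\mathsf{cr}}=0$.
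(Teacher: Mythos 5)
Your computation is correct and is exactly the "straightforward computation" the paper invokes: substituting $w=0$ gives $S(0)=-\upgamma/2+\upgamma\mathsf{u}$, and differentiating under the integral sign (justified as you note by the boundedness of $\mathsf{f}$ and of the denominators near $w=0$) gives the stated $S'(0)$. No issues.
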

\begin{proof}
	A straightforward computation.
\end{proof}
Thus, the pair
\begin{equation}
	\label{u_correct_value}
	\mathsf{u}
	=
	\frac{1}{\upgamma}
	\log
	\left( 
		\frac{\int_0^1 e^{\upgamma s}ds}
		{\int_0^1 e^{\upgamma(s-\mathsf{f}(s))}ds}
	\right),
	\qquad 
	w_{\mathsf{cr}}=0
\end{equation}
solves \eqref{u_wcr_equations}. 
Throughout the rest of the paper we will fix the value of $\mathsf{u}$
as in \eqref{u_correct_value} and will call it the global location.

\begin{lemma}
	\label{lemma:u_obvious_inequalities}
	We have $0\le\mathsf{u}\le\mathsf{f}(0)$.
\end{lemma}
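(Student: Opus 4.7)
The plan is to unpack the definition of $\mathsf{u}$ in \eqref{u_correct_value} and reduce each inequality to a monotone pointwise comparison of the integrands, using only the two facts that $\mathsf{f}$ is weakly decreasing and that $\mathsf{f}(1)=0$ (Assumption \ref{ass:function}), together with $\upgamma>0$.

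First I would handle the lower bound $\mathsf{u}\ge 0$. Since $\upgamma>0$, this is equivalent to
\begin{equation*}
\int_0^1 e^{\upgamma s}\,ds \;\ge\; \int_0^1 e^{\upgamma(s-\mathsf{f}(s))}\,ds.
\end{equation*}
Because $\mathsf{f}$ is weakly decreasing with $\mathsf{f}(1)=0$, we have $\mathsf{f}(s)\ge 0$ on $[0,1]$, hence $e^{-\upgamma \mathsf{f}(s)}\le 1$ and the pointwise inequality $e^{\upgamma(s-\mathsf{f}(s))}\le e^{\upgamma s}$ holds for every $s$. Integrating gives the desired inequality, and taking $\tfrac{1}{\upgamma}\log$ (which is increasing) yields $\mathsf{u}\ge 0$.

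Next I would handle the upper bound $\mathsf{u}\le \mathsf{f}(0)$. Rewriting using the definition, this is equivalent to
\begin{equation*}
\int_0^1 e^{\upgamma s}\,ds \;\le\; e^{\upgamma \mathsf{f}(0)}\int_0^1 e^{\upgamma(s-\mathsf{f}(s))}\,ds
\;=\; \int_0^1 e^{\upgamma(s+\mathsf{f}(0)-\mathsf{f}(s))}\,ds.
\end{equation*}
Here I use that $\mathsf{f}$ is weakly decreasing, so $\mathsf{f}(0)-\mathsf{f}(s)\ge 0$ for $s\in[0,1]$, which makes the integrand on the right dominate $e^{\upgamma s}$ pointwise. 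Integrating and taking $\tfrac{1}{\upgamma}\log$ gives $\mathsf{u}\le \mathsf{f}(0)$.

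Both steps are elementary, and I do not anticipate any real obstacle: the only subtlety is to remember that \Cref{ass:function} forces $\mathsf{f}\ge 0$ via the normalization $\mathsf{f}(1)=0$ combined with weak decrease, which is exactly what drives the lower bound, while weak decrease alone (comparing $\mathsf{f}(s)$ to $\mathsf{f}(0)$) drives the upper bound.
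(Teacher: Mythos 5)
Your proof is correct and follows essentially the same route as the paper: the lower bound comes from the pointwise inequality $e^{\upgamma(s-\mathsf{f}(s))}\le e^{\upgamma s}$ (using $\mathsf{f}\ge\mathsf{f}(1)=0$), and the upper bound from $\int_0^1 e^{\upgamma(s-\mathsf{f}(s))}ds\ge e^{-\upgamma\mathsf{f}(0)}\int_0^1 e^{\upgamma s}ds$ (using $\mathsf{f}(s)\le\mathsf{f}(0)$), which is exactly the paper's argument. No issues.
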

\begin{proof}
	First, since $0=\mathsf{f}(1)\le \mathsf{f}(s)\le\mathsf{f}(0)$ for all $s\in[0,1]$,
	the bottom integral under the logarithm in \eqref{u_correct_value} is no greater than the top one,
	so $\mathsf{u}\ge0$. 
	The inequality $\mathsf{u}\le \mathsf{f}(0)$ follows from
	$\int_0^1 e^{\upgamma (s-\mathsf{f}(s))}ds
	\ge e^{-\upgamma\mathsf{f}(0)}\int_0^1 e^{\upgamma s}ds$, 
	which completes the proof.
\end{proof}
\Cref{lemma:u_obvious_inequalities} 
confirms that the global location $\mathsf{u}$ lies
within the range prescribed by the interlacing constraints
$0=N\mathsf{f}(1)\approx\nu_N\le \lambda^{K}_j\le \nu_1\approx N\mathsf{f}(0)$.

\section{Critical points and contour deformation}
\label{sec:critical_points}

In this section we study in detail the critical points of the function $S(w)$
as well as the contour plot of $\Re S(w)$.
One of the key ingredients is an approximation of the
weakly decreasing function $\mathsf{f}(x)$, $x\in[0,1]$,
of Assumption \ref{ass:function} by piecewise constant 
weakly decreasing functions $\mathsf{f}_m(x)$ 
(we also require that each $\mathsf{f}_m$ has a finite range).
The approximating functions $\mathsf{f}_m(x)$ do not depend on $N$,
and the whole approximation is done independently of the main limit $N\to+\infty$.
Note that the case of $\mathsf{f}(x)$ piecewise constant corresponds to
considering random tilings of polygons with fixed number of proportionally growing sides.

\subsection{Critical points}
\label{sub:critical_piecewise}

Let us assume that $\mathsf{f}(x)=\mathsf{f}_m(x)$ 
is piecewise constant with $m\ge 2$ values, and parametrize
it by two sequences
\begin{equation}
	\label{piecewise_f_1}
	0=s_0<s_1<\ldots <s_m=1,\qquad \alpha_1>\alpha_2>\ldots >\alpha_m=0
\end{equation}
such that (here and below $\mathbf{1}_{A}$ is the indicator of a set $A$)
\begin{equation}
	\label{piecewise_f_2}
	\mathsf{f}_m(x)=\sum_{j=1}^m \alpha_j\mathbf{1}_{s_{j-1}<x\le s_j}.
\end{equation}
Recall that we always assume that $\mathsf{f}(1)=0$.
Note that $\left\{ s_j \right\}$ and $\left\{ \alpha_j \right\}$ will depend on $m$, 
but we suppress this dependence in the notation.

In this piecewise constant case the function $S(w)=S_m(w)$ \eqref{S_action_function_definition}
becomes
\begin{equation}
	\label{S_action_piecewise}
	S_m(w)=
	\int_0^1\log
	\left( e^{-\upgamma s}-w \right)ds
	-
	\sum_{j=1}^{m}
	\int_{s_{j-1}}^{s_j}\log
	\left( e^{-\upgamma\mathsf{u}}
	-we^{\upgamma(s-\alpha_j)}\right)ds,
\end{equation}
and each of the integrals can be expressed in terms of dilogarithms. 
The $w$-derivative of $S_m(w)$ has a simpler form:
\begin{lemma}
	\label{lemma:critical_points_equation}
	We have
	\begin{equation}
		\label{S_m_prime}
		S_m'(w)=
		\frac{\log \left(1-we^{\upgamma } \right)-\log (1-w)}{\upgamma  w}
		+
		\sum_{j=1}^{m}
		\frac{\log \left(1-w e^{\upgamma  
		(\mathsf{u}-\alpha_j +s_{j-1})}
		\right)-
		\log \left(1-w e^{\upgamma  
		(\mathsf{u}-\alpha_j+s_j)}
		\right)}
		{\upgamma  w}.
	\end{equation}
	Moreover, the critical point equation $S_m'(w)=0$ implies
	an algebraic equation
	\begin{equation}
		\label{S_m_algebraic}
		e^{\upgamma wS_m'(w)}=
		\frac{1-we^{\upgamma}}{1-w}
		\prod_{j=1}^{m}
		\frac{1-we^{\upgamma(\mathsf{u}-\alpha_j+s_{j-1})}}
		{1-we^{\upgamma(\mathsf{u}-\alpha_j+s_j)}}
		=
		1.
	\end{equation}
\end{lemma}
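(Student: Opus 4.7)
The plan is to verify both claims by a direct calculation from \eqref{S_action_piecewise}. First I would differentiate under the integral sign to obtain
$$S_m'(w) = -\int_0^1\frac{ds}{e^{-\upgamma s}-w} + \sum_{j=1}^m\int_{s_{j-1}}^{s_j}\frac{e^{\upgamma(s-\alpha_j)}\,ds}{e^{-\upgamma\mathsf{u}} - we^{\upgamma(s-\alpha_j)}},$$
both of whose integrals are elementary after a suitable substitution.

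For the first integral I would substitute $u = e^{-\upgamma s}$, so that $ds = -du/(\upgamma u)$ and the limits become $u=1$ and $u=e^{-\upgamma}$, then apply the partial fraction $\frac{1}{u(u-w)} = \frac{1}{w}\bigl(\frac{1}{u-w}-\frac{1}{u}\bigr)$. Evaluating and then using $\log(e^{-\upgamma}-w) = \log(1-we^{\upgamma}) - \upgamma$ to eliminate the $\log u$ boundary term produces exactly the first summand $\frac{\log(1-we^\upgamma)-\log(1-w)}{\upgamma w}$ of \eqref{S_m_prime}. For each integral in the sum I would substitute $v = we^{\upgamma(s-\alpha_j)}$ (so $dv = \upgamma v\,ds$), reducing it to $\frac{1}{\upgamma w}\int\frac{dv}{e^{-\upgamma\mathsf{u}}-v}$. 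Evaluating this logarithm at $v = we^{\upgamma(s_{j-1}-\alpha_j)}$ and $v = we^{\upgamma(s_j-\alpha_j)}$, and then factoring $e^{-\upgamma\mathsf{u}}$ out of each logarithm so that the additive constants $-\upgamma\mathsf{u}$ cancel upon subtraction, produces the $j$th summand of \eqref{S_m_prime}.

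For the algebraic equation \eqref{S_m_algebraic}, I would multiply \eqref{S_m_prime} through by $\upgamma w$ and exponentiate; the differences of logarithms then turn into ratios of their arguments, giving directly the displayed product. The equation $S_m'(w)=0$ is therefore equivalent to this product being equal to $1$.

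The calculation itself is routine; the only point requiring any care is branch selection, since each logarithm is well-defined only up to $2\pi\mathbf{i}$. I would argue this away by noting that every log appearing in \eqref{S_m_prime} is the value of the analogous integrand-logarithm at its endpoint, with branches already fixed (as in \eqref{K1_int_expon1}) to be the principal branch on a neighborhood of the contour $C'$ where none of the arguments $1-we^{\upgamma(\mathsf{u}-\alpha_j+s_k)}$ or $1-we^\upgamma$, $1-w$ crosses the negative real axis. Since both sides of \eqref{S_m_algebraic} are meromorphic in $w$, once the logarithmic identity is verified on this open set the algebraic consequence holds globally, so no real obstacle arises.
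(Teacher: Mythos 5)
Your computation is correct and is exactly the ``straightforward computation'' the paper leaves to the reader: differentiating \eqref{S_action_piecewise} under the integral sign, evaluating the resulting elementary integrals via the substitutions and partial fraction you describe, and exponentiating $\upgamma w S_m'(w)$ to pass to \eqref{S_m_algebraic}. Your handling of the branch issue also matches the paper's own remark that \eqref{S_m_prime} holds with standard branches only for $w$ near $0$, while \eqref{S_m_algebraic} is branch-free.
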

\begin{proof}
	A straightforward computation.
\end{proof}
The expression for $S_m'(w)$ given in \eqref{S_m_prime} is valid with standard
branches of the logarithms for $w$ sufficiently close to $0$.
Instead of looking at \eqref{S_m_prime} directly we will focus on the algebraic
equation \eqref{S_m_algebraic} implied by $S_m'(w)=0$
in part because \eqref{S_m_algebraic} does not involve choosing the branches.

Clearly, $w=0$ satisfies \eqref{S_m_algebraic} regardless 
of the value of $\mathsf{u}$ (but depending on $\mathsf{u}$ the solution
$w=0$ might not lead to a critical point of $S_m(w)$).
Let us examine other possible critical points:
\begin{proposition}
	\label{proposition:other_critical_points_piecewise}
	All solutions to \eqref{S_m_algebraic} are real.
	Moreover, besides $w=0$, all solutions 
	to \eqref{S_m_algebraic}
	belong
	to the segment
	$\left[ e^{-\upgamma(\mathsf{u}+1)},
	e^{-\upgamma(\mathsf{u}-\alpha_1)} \right]$.
\end{proposition}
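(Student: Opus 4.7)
The plan is to clear denominators in \eqref{S_m_algebraic} and count real roots of the resulting polynomial via sign alternation at a carefully chosen set of test points. Denote $a_0 = 1$, $b_0 = 0$, and for $j = 1,\ldots,m$ set $a_j := \mathsf{u} - \alpha_j + s_{j-1}$, $b_j := \mathsf{u} - \alpha_j + s_j$; then \eqref{S_m_algebraic} becomes
\begin{equation*}
	P(w) := \prod_{j=0}^m (1 - w e^{\upgamma a_j}) - \prod_{j=0}^m (1 - w e^{\upgamma b_j}) = 0.
\end{equation*}
A direct telescoping shows $\sum_{j=0}^m a_j = \sum_{j=0}^m b_j$, so the $w^{m+1}$ coefficient of $P$ vanishes and $\deg P \le m$. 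Clearly $P(0) = 0$, and the coefficient of $w^1$ is $\sum e^{\upgamma b_j} - \sum e^{\upgamma a_j}$, which vanishes by the defining equation for $\mathsf{u}$ recorded in \Cref{lemma:S_at_0}. Therefore $P(w) = w^2 \tilde{Q}(w)$ with $\deg \tilde{Q} \le m - 2$, and \eqref{S_m_algebraic} has at most $m - 2$ nonzero complex solutions.

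Next I would evaluate $P$ at the test points $w = B_j := e^{-\upgamma b_j}$, $j = 1,\ldots,m$, where the second product in $P$ vanishes, so $P(B_j) = \prod_{k=0}^m (1 - e^{\upgamma(a_k - b_j)})$. A direct case analysis using the interlacing $a_1 < b_1 < a_2 < b_2 < \ldots < a_m < b_m$ shows that for $k \in \{1,\ldots,m\}$ one has $a_k < b_j$ iff $k \le j$, which fixes the signs of those $m$ factors. The anomalous factor $k = 0$, namely $1 - e^{\upgamma(1 - b_j)}$, is negative when $b_j < 1$ and positive when $b_j > 1$. Since $0 < \mathsf{u} < \alpha_1$ strictly (a consequence of $\mathsf{f}$ being nonconstant, so the inequalities in the proof of \Cref{lemma:u_obvious_inequalities} are strict), we have $b_1 \le s_1 < 1 < \mathsf{u} + 1 = b_m$, and the strict monotonicity $b_1 < b_2 < \ldots < b_m$ produces a unique threshold $j^\ast \in \{2,\ldots,m\}$ with $b_{j^\ast - 1} < 1 < b_{j^\ast}$.

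Assembling these contributions gives $\mathrm{sign}\,P(B_j) = (-1)^{m-j+1}$ for $j < j^\ast$ and $\mathrm{sign}\,P(B_j) = (-1)^{m-j}$ for $j \ge j^\ast$. The sequence $P(B_1),\ldots,P(B_m)$ alternates in sign at every consecutive step except at the transition $j^\ast - 1 \to j^\ast$, where the sign flip of the anomalous $k=0$ factor exactly cancels the interlacing flip; the total number of sign changes is therefore $m - 2$. The intermediate value theorem then produces at least $m - 2$ distinct real zeros of $P$ in $(B_m, B_1) \subset [B_m, A_1]$, where $A_1 = e^{-\upgamma a_1} = e^{-\upgamma(\mathsf{u} - \alpha_1)}$; combined with the degree bound $\deg \tilde{Q} \le m - 2$ from the first step, these account for all nonzero solutions of \eqref{S_m_algebraic}. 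Hence all solutions are real, and all nonzero ones lie in $[e^{-\upgamma(\mathsf{u}+1)}, e^{-\upgamma(\mathsf{u}-\alpha_1)}]$. The main delicacy is the bookkeeping of the anomalous $k = 0$ factor: its sign flip must cancel exactly one interlacing alternation in order for the resulting count to match the degree bound $m - 2$ rather than exceed it, which would contradict the factorization $P = w^2 \tilde{Q}$. Nongeneric coincidences among the $a_k, b_j$ (for example $b_{j^\ast} = 1$) are handled by a standard perturbation argument from the generic case.
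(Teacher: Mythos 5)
Your proof is correct and follows essentially the same route as the paper: the same degree bound (leading, constant, and linear coefficients of numerator and denominator agree, the last by the choice of $\mathsf{u}$, giving a double root at $w=0$ and at most $m-2$ nonzero roots), followed by locating at least $m-2$ real roots in the segment via the almost-interlacing $a_1<b_1<\ldots<a_m<b_m$ together with the two ``anomalous'' roots $e^{-\upgamma}$ and $1$. Your explicit sign bookkeeping at the test points $B_j$, with the single cancelled alternation at $j^\ast$, is simply a rigorous rendering of what the paper argues from the schematic plot in its \Cref{fig:roots}.
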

Note that $e^{-\upgamma(\mathsf{u}+1)}<1$, and 
$
e^{-\upgamma(\mathsf{u}-\alpha_1)}
=
e^{-\upgamma(\mathsf{u}-\mathsf{f}_m(0))}\ge1
$, 
cf.
\Cref{lemma:u_obvious_inequalities}.
\begin{proof}[Proof of \Cref{proposition:other_critical_points_piecewise}]
	Let $N_m(w)$ be the numerator and $D_m(w)$ the denominator of \eqref{S_m_algebraic}.
	Both are polynomials in $w$ of degree $m+1$.
	We will count the number of real solutions to the equation $N_m(w)=D_m(w)$.

	First, note that the ratio of the leading coefficients in $N_m(w)$ and $D_m(w)$ is
	\begin{equation*}
		\frac{e^{\upgamma}\prod_{j=1}^m e^{\upgamma(\mathsf{u}-\alpha_j+s_{j-1})}}
		{\prod_{j=1}^{m}e^{\upgamma(\mathsf{u}-\alpha_j+s_j)}}
		=e^{\upgamma(1+s_0-s_m)}=1
	\end{equation*}
	by \eqref{piecewise_f_1}.
	Thus, $N_m(w)-D_m(w)$ is a polynomial of degree at most $m$, so the equation
	$N_m(w)=D_m(w)$ has at most $m$ solutions. 

	Observe that in our piecewise constant case the 
	expression \eqref{u_correct_value} for $\mathsf{u}$
	simplifies to 
	\begin{equation}
		\label{u_correct_value_piecewise}
		\mathsf{u}
		=
		\frac{1}{\upgamma}
		\log
		\left( 
			\frac{e^{\upgamma}-1}{\sum_{j=1}^{m}
			\left( 
				e^{\upgamma(s_j-\alpha_j)}
				-
				e^{\upgamma(s_{j-1}-\alpha_j)}
			\right)}
		\right).
	\end{equation}
	This readily implies that $N_m(w)$ and $D_m(w)$ have
	the same linear in $w$ terms. 
	Since, in addition, the constant terms are both $1$, we
	see that $w=0$ is a double solution to $N_m(w)=D_m(w)$.

	Consider separately the roots of $N_m(w)$ and $D_m(w)$,
	and denote $n_j:=e^{-\upgamma(\mathsf{u}-\alpha_j+s_{j-1})}$
	and $d_j:=e^{-\upgamma(\mathsf{u}-\alpha_j+s_j)}$, $j=1,\ldots,m $.
	Note that $N_m(w)$ in addition has the root $w=e^{-\upgamma}$,
	and $D_m(w)$ in addition has the root $w=1$.
	From \eqref{piecewise_f_1} we see that the other roots interlace as
	\begin{equation*}
		d_m<n_{m}<d_{m-1}<n_{m-1}<\ldots<n_2<d_1<n_1.
	\end{equation*}
	Moreover, by \eqref{piecewise_f_1} and \Cref{lemma:u_obvious_inequalities} 
	we have
	\begin{equation*}
		d_m
		=
		e^{-\upgamma(\mathsf{u}+1)}
		\le 
		e^{-\upgamma}
		<
		1
		\le
		e^{-\upgamma(\mathsf{u}-\alpha_1)}
		=n_1.
	\end{equation*}
	Thus, all roots of $N_m(w)$ and $D_m(w)$
	(and not just the $n_j$'s and the $d_j$'s)
	``almost interlace''.
	Namely, 
	there is exactly one consecutive
	pair of roots of $N_m(w)$, one of them is $e^{-\upgamma}$, 
	surrounded by a pair of roots of $D_m(w)$, and 
	similarly there is only one consecutive pair of 
	roots of $D_m(w)$, one of them is $1$, 
	surrounded by a pair of roots of $N_m(w)$.
	See \Cref{fig:roots}.

	\begin{figure}[htpb]
		\begin{tikzpicture}
			[scale=.5,very thick]
			\draw[->,opacity=.4] (0,0)--++(30,0);
			\foreach \solid in
			{2,4,6,9,13,15,17,18,20,22.5,26}
			{
				\draw[fill] (\solid,0) circle(6pt);
			}
			\foreach \hollow in
			{3,5,7.3,10,11.5,14,16,19,21,24,27}
			{
				\draw (\hollow,0) circle(6pt);
			}
			\draw (1,1) 
				to[out=-25,in=100] (2,0) 
				to[out=-70,in=-95] (4,0)
				to[out=85,in=95] (6,0)
				to[out=-85,in=-95] (9,0)
				to[out=85,in=95] (13,0)
				to[out=-85,in=-95] (15,0)
				to[out=85,in=95] (17,0) to [out=-85,in=-180] (17.5,-1)
				to[out=0,in=-95] (18,0)
				to[out=85,in=95] (20,0)
				to[out=-85,in=-95] (22.5,0)
				to[out=85,in=105] (26,0)
				to[out=-75,in=140] (28,-2);
			\draw[densely dotted] (1,2)
				to[out=-35,in=110] (3,0) 
				to [out=-70,in=-180] (4,-1.5) to[out=0,in=-93] (5,0)
				to [out=70,in=-180] (6,1) to[out=0,in=93] (7.3,0)
				to [out=-70,in=-180] (8.5,-1.5) to[out=0,in=-93] (10,0)
				to [out=70,in=-180] (10.7,.8) to[out=0,in=93] (11.5,0)
				to [out=-70,in=-180] (12.5,-.8) to[out=0,in=-93] (14,0)
				to [out=70,in=-180] (15,2) to[out=0,in=93] (16,0)
				to [out=-70,in=-180] (17.5,-1.5) to[out=0,in=-93] (19,0)
				to [out=70,in=-180] (20,1.5) to[out=0,in=93] (21,0)
				to [out=-70,in=-180] (22,-1.5) to[out=0,in=-93] (24,0)
				to [out=70,in=-180] (25.5,1.5) to[out=0,in=93] (27,0)
				to [out=-70,in=140] (28,-1);
		\end{tikzpicture}
		\caption{%
			Positioning of the roots of $N_m(w)$ (hollow)
			and $D_m(w)$ (solid) for $m=10$, as well as schematic plots of 
			both polynomials.
			One readily sees that these plots must intersect 
			at least $m-2=8$ times.%
		}
		\label{fig:roots}
	\end{figure}
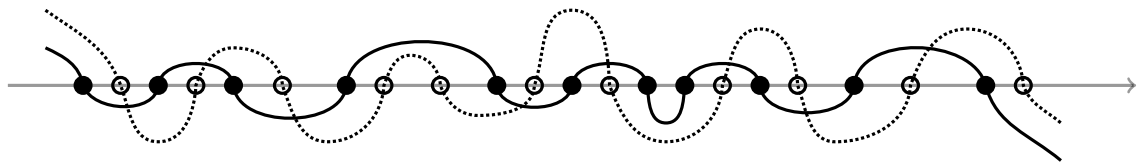
	
	This almost interlacing implies that there are 
	at least $m-2$ real solutions to $N_m(w)=D_m(w)$ lying from
	$e^{-\upgamma(\mathsf{u}+1)}$ to 
	$e^{-\upgamma(\mathsf{u}-\alpha_1)}$.
	Adding to this the double solution $w=0$
	we see that the claim holds.
\end{proof}

\begin{proposition}
	\label{proposition:other_critical_points_general}
	For an arbitrary weakly decreasing function $\mathsf{f}(x)$ on $[0,1]$, 
	the corresponding function $S(w)$ \eqref{S_action_function_definition}
	has no nonreal critical points. 
	Moreover, the only real
	critical point of $S(w)$
	outside the segment
	$\left[ e^{-\upgamma(\mathsf{u}+1)},
	e^{-\upgamma(\mathsf{u}-\mathsf{f}(0))} \right]$
	is $w_{\mathsf{cr}}=0$.
\end{proposition}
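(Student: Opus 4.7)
The plan is to reduce the general claim to the piecewise-constant case of Proposition~\ref{proposition:other_critical_points_piecewise} by an approximation-plus-Hurwitz argument. First I would choose a sequence of weakly decreasing piecewise constant functions $\mathsf{f}_m$ of the form \eqref{piecewise_f_1}--\eqref{piecewise_f_2} satisfying $\mathsf{f}_m(1)=0$, $\mathsf{f}_m \to \mathsf{f}$ pointwise a.e.\ with a uniform bound, and with $\alpha_1^{(m)} := \mathsf{f}_m(0) \to \mathsf{f}(0)$. Applying bounded convergence in \eqref{u_intro} yields $\mathsf{u}_m \to \mathsf{u}$, so the segments $I_m := [e^{-\upgamma(\mathsf{u}_m+1)},\,e^{-\upgamma(\mathsf{u}_m-\alpha_1^{(m)})}]$ converge in Hausdorff distance to $I := [e^{-\upgamma(\mathsf{u}+1)},\,e^{-\upgamma(\mathsf{u}-\mathsf{f}(0))}]$.

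Next I would differentiate \eqref{S_action_function_definition} under the integral sign to obtain
\begin{equation*}
S'(w) = -\int_0^1 \frac{ds}{e^{-\upgamma s}-w} + \int_0^1 \frac{e^{\upgamma(s-\mathsf{f}(s))}\,ds}{e^{-\upgamma\mathsf{u}} - w\,e^{\upgamma(s-\mathsf{f}(s))}},
\end{equation*}
and the analogous formula with $(\mathsf{f}_m,\mathsf{u}_m)$ for $S_m'$. Because $s\mapsto s-\mathsf{f}(s)$ is nondecreasing with range contained in $[-\mathsf{f}(0),1]$, the set of $w\in\mathbb{C}$ where either denominator vanishes is a subset of $I\subset\mathbb{R}$; hence $S'$ is holomorphic on $\mathbb{C}\setminus I$ and, similarly, $S_m'$ is holomorphic on $\mathbb{C}\setminus I_m$. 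For any compact $K\subset \mathbb{C}\setminus I$ one has $K\subset \mathbb{C}\setminus I_m$ for all $m$ large, and the integrands defining $S_m'$ converge pointwise in $s$ to those of $S'$ while being dominated, uniformly in $(m,w,s)$ with $w\in K$, by an integrable function. Dominated convergence then yields $S_m'\to S'$ uniformly on $K$.

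Finally, suppose $w_0\in \mathbb{C}\setminus I$ with $w_0\neq 0$ were a zero of $S'$. Since $S'$ is holomorphic and not identically zero on the connected component of $\mathbb{C}\setminus I$ containing $w_0$, Hurwitz's theorem produces zeros $w_m$ of $S_m'$ with $w_m\to w_0$. But Proposition~\ref{proposition:other_critical_points_piecewise} forces each $w_m$ to be real and to lie in $I_m \cup \{0\}$; passing to the limit gives $w_0\in I\cup\{0\}$, contradicting the choice of $w_0$. This simultaneously rules out nonreal critical points of $S$ and confines every nonzero real critical point to the segment $I$.

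The step I expect to be most delicate is the verification that $S'$ is holomorphic precisely on $\mathbb{C}\setminus I$ and that $S_m'\to S'$ holds uniformly on compact subsets of $\mathbb{C}\setminus I$ while the exceptional set $I_m$ drifts with $m$. The crucial estimate is a lower bound, uniform in $s\in[0,1]$ and $w\in K$, on the modulus of $e^{-\upgamma\mathsf{u}}-w e^{\upgamma(s-\mathsf{f}(s))}$, which follows once one notes that the image of $s\mapsto e^{-\upgamma(\mathsf{u}+s-\mathsf{f}(s))}$ lies in $I$ and $K$ is separated from $I$. With this in hand the dominated convergence and Hurwitz steps are routine.
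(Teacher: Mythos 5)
Your proposal is correct and follows essentially the same route as the paper: approximate $\mathsf{f}$ by piecewise constant profiles $\mathsf{f}_m$, establish uniform convergence $S_m'\to S'$ on compact subsets of the complement of the singular segment, and invoke Hurwitz/Rouch\'e together with \Cref{proposition:other_critical_points_piecewise} to rule out nonreal critical points and to confine the nonzero real ones to $\left[ e^{-\upgamma(\mathsf{u}+1)}, e^{-\upgamma(\mathsf{u}-\mathsf{f}(0))} \right]$. Your write-up merely fills in details the paper leaves implicit (the convergence $\mathsf{u}_m\to\mathsf{u}$, the drifting segments $I_m$, and the explicit integral formula for $S'$).
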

\begin{proof}
	If $\mathsf{f}(x)=\mathsf{f}_m(x)$ is piecewise constant, this follows from 
	\Cref{proposition:other_critical_points_piecewise} and the fact that the 
	equation $S_m'(w)=0$ implies \eqref{S_m_algebraic}.

	If $\mathsf{f}(x)$ is arbitrary, let us approximate it by a sequence
	of piecewise constant decreasing functions $\mathsf{f}_m(x)$, $m\to+\infty$,
	such that the corresponding derivatives $S_m'(w)$ converge 
	to $S'(w)$ uniformly in $w$ belonging to compact subsets of $\mathbb{C}\setminus\mathbb{R}$.
	If there is a nonreal critical point of $S(w)$, then by Rouche's theorem
	one of the functions $S_m(w)$ also would have a nonreal critical point, which is impossible
	by \Cref{proposition:other_critical_points_piecewise}.
	
	To control locations of real critical points note that $S'(w)$ is well
	defined (by differentiating under the integrals in
	\eqref{S_action_function_definition}) for real $w$ outside $[
	e^{-\upgamma(\mathsf{u}+1)}, e^{-\upgamma(\mathsf{u}-\mathsf{f}(0))} ]$, and
	so can be uniformly approximated there by the $S_m'(w)$'s. 
	This completes the proof.
\end{proof}

\subsection{Steepest descent contour}
\label{sub:contour}

After describing the critical points of the function $S(w)$
let us focus on the steepest descent contour 
$\mathfrak{C}:=\left\{ w\in\mathbb{C}\colon \Im S(w)=0 \right\}$
passing through the critical point $w_{\mathsf{cr}}=0$
and along which $\Re S(w)$ attains its only maximum at $w_{\mathsf{cr}}$
(recall that $S(0)=\upgamma(\mathsf{u}-1/2)$ is real).
The goal of this subsection is to justify that the contour $\mathfrak{C}$
and the region where $\Re S(w)<S(0)$ look as in 
\Cref{fig:steep_contour}.

\begin{figure}[htbp]
	\includegraphics[width=.4\textwidth]{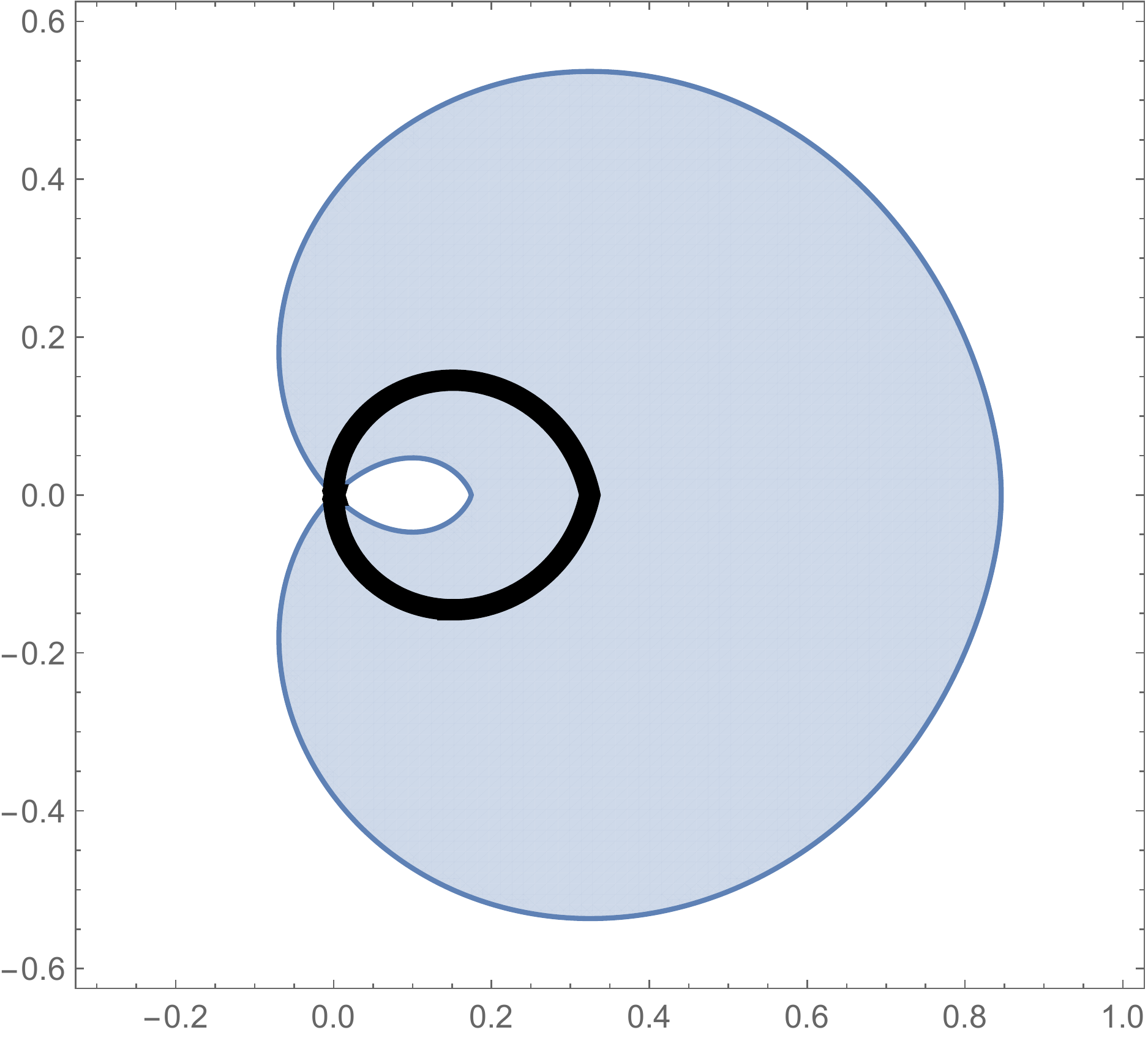}
	\caption{%
		The steepest descent contour $\mathfrak{C}=\{w\colon \Im S(w)=0\}$
		passing through the origin. 
		The shaded zone 
		is where $\Re S(w)<\Re S(0)$.
		The parameters in the picture
		are $m=2$, $\alpha=\{1,0\}$, $s=\{0,\frac34,1\}$, and $\upgamma=\frac{3}{2}$
		(so $e^{-\upgamma}\approx.223$).%
	}
	\label{fig:steep_contour}
\end{figure}

Our first observation is that the direction 
at which the steepest descent contour $\mathfrak{C}$
passes through zero is vertical:
\begin{lemma}
	\label{lemma:S_double_prime}
	We have $S''(0)>0$.
\end{lemma}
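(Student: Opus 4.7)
The plan is to compute $S''(0)$ explicitly and then recognize the positivity as a Chebyshev/correlation-type inequality forced by the defining relation \eqref{u_correct_value} for $\mathsf{u}$ together with the monotonicity of $\mathsf{f}$.

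First I would differentiate under the integral in \eqref{S_action_function_definition} twice to obtain
\begin{equation*}
S''(w)=-\int_0^1\frac{ds}{(e^{-\upgamma s}-w)^2}+\int_0^1\frac{e^{2\upgamma(s-\mathsf{f}(s))}\,ds}{(e^{-\upgamma\mathsf{u}}-we^{\upgamma(s-\mathsf{f}(s))})^2},
\end{equation*}
and at $w=0$ this collapses to
\begin{equation*}
S''(0)=\int_0^1\bigl(X(s)^2-Y(s)^2\bigr)\,ds,\qquad X(s):=e^{\upgamma(\mathsf{u}+s-\mathsf{f}(s))},\quad Y(s):=e^{\upgamma s}.
\end{equation*}
The definition \eqref{u_correct_value} of $\mathsf{u}$ is precisely the statement $\int_0^1 X\,ds=\int_0^1 Y\,ds$, i.e. $\int_0^1(X-Y)\,ds=0$, so the problem reduces to showing $\int_0^1(X-Y)(X+Y)\,ds>0$.

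Next I would exploit the geometry of $X-Y$. Since $\mathsf{f}$ is weakly decreasing, the ratio $X/Y=e^{\upgamma(\mathsf{u}-\mathsf{f}(s))}$ is weakly increasing in $s$; by \Cref{lemma:u_obvious_inequalities} it satisfies $X(0)/Y(0)\le 1\le X(1)/Y(1)$, so $X-Y$ changes sign exactly once, from $\le 0$ to $\ge 0$, at some $s^*\in[0,1]$. Setting $c:=X(s^*)+Y(s^*)$ and using $\int_0^1(X-Y)\,ds=0$, I would rewrite
\begin{equation*}
\int_0^1(X-Y)(X+Y)\,ds=\int_0^1(X-Y)\bigl((X+Y)-c\bigr)\,ds.
\end{equation*}
Since both $X$ and $Y$ are non-decreasing in $s$, so is $X+Y$; therefore $(X+Y)-c\le 0$ on $[0,s^*]$ and $\ge 0$ on $[s^*,1]$, matching the sign of $X-Y$ on each subinterval. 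Hence the integrand is pointwise $\ge 0$, and the integral is non-negative.

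Finally, strict positivity. The integrand vanishes a.e.\ only if either $X=Y$ a.e.\ or $X+Y\equiv c$ on the set where $X\ne Y$. The latter is ruled out because $Y=e^{\upgamma s}$ is strictly increasing, so $X+Y$ is strictly increasing wherever $X\ne Y$ fails to exactly offset $Y$; and $X=Y$ a.e.\ would force $\mathsf{f}(s)\equiv\mathsf{u}$, contradicting the nonconstancy part of \Cref{ass:function}. This yields $S''(0)>0$. The only subtle point is the last strictness argument; the rest is a mechanical differentiation combined with the standard one-sign-change correlation trick.
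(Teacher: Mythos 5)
Your proof is correct, and it takes a genuinely different route from the paper. The paper first treats piecewise constant $\mathsf{f}=\mathsf{f}_m$ and proves $S_m''(0)>0$ by induction on the number of pieces $m$ (showing that $S_m''(0)$ is increasing in $\alpha_1$ and degenerates to the $(m-1)$-piece case when $\alpha_1=\alpha_2$, with base case $S_1''(0)=0$), and then passes to general $\mathsf{f}$ by approximation combined with Hurwitz's theorem and the localization of critical points from \Cref{proposition:other_critical_points_piecewise}. Your argument is a direct single-crossing (Chebyshev-type) inequality: with $X=e^{\upgamma(\mathsf{u}+s-\mathsf{f}(s))}$ and $Y=e^{\upgamma s}$, the identity $\int_0^1 X\,ds=\int_0^1 Y\,ds$ is exactly the definition \eqref{u_correct_value} of $\mathsf{u}$, the ratio $X/Y$ is weakly increasing because $\mathsf{f}$ is weakly decreasing, and $X+Y$ is strictly increasing, so subtracting the constant $c=(X+Y)(s^*)$ makes the integrand pointwise nonnegative. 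This is shorter, needs no approximation or analyticity input, works uniformly for all weakly decreasing $\mathsf{f}$, and as a bonus reproves $S_1''(0)=0$ as the degenerate case of equality. What the paper's longer route buys is consistency with the rest of \Cref{sec:critical_points}, where the piecewise-constant machinery and the critical-point localization are needed anyway.

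One shared caveat worth flagging: your strictness step (and equally the paper's) requires that $\mathsf{f}$ be nonconstant in the almost-everywhere sense. If $\mathsf{f}$ equals a constant on $(0,1)$ and only differs at an endpoint (so $X=Y$ a.e.), then genuinely $S''(0)=0$ — indeed $S$ is constant — so \Cref{ass:function} must be read as excluding such degenerate profiles. Your appeal to nonconstancy is therefore on exactly the same footing as the paper's, and this is not a defect specific to your argument. A minor polish: the sentence ruling out $X+Y\equiv c$ is slightly garbled; all you need is that $X+Y$ is strictly increasing (since $Y$ is and $X$ is non-decreasing), hence $\{s: X(s)+Y(s)=c\}$ is at most a single point and has measure zero.
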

\begin{proof}
	First, we deal with the case when 
	$\mathsf{f}(x)=\mathsf{f}_m(x)$ is piecewise constant as in \eqref{piecewise_f_2}.
	Differentiating \eqref{S_m_prime} in $w$ and setting $w=0$ we obtain
	\begin{equation*}
		2\upgamma S_m''(0)
		=
		1-e^{2\upgamma}+
		\sum_{j=1}^{m}
		\left( 
			e^{2\upgamma(\mathsf{u}-\alpha_j+s_{j})}
			- 
			e^{2\upgamma(\mathsf{u}-\alpha_j+s_{j-1})}
		\right).
	\end{equation*}
	Plugging in $\mathsf{u}$ given by \eqref{u_correct_value_piecewise}
	we have
	\begin{equation*}
		2\upgamma S_m''(0)
		=
		\left( 1-e^{\upgamma} \right)
		\left[ 
			1+e^{\upgamma}
			-
			\left( e^{\upgamma}-1 \right)
			\frac{\sum_{j=1}^{m}
			e^{-2\upgamma\alpha_j}\left( e^{2\upgamma s_j}-e^{2\upgamma s_{j-1}} \right)}
			{
				\bigl( 
					\sum_{j=1}^{m}
					e^{-\upgamma\alpha_j}
					\left( e^{\upgamma s_j}-e^{\upgamma s_{j-1}} \right)
				\bigr)^2
			}
		\right].
	\end{equation*}
	We will prove that $S_m''(0)>0$ by induction on $m$. When $m=1$, using
	\eqref{piecewise_f_1} 
	we have $S_1''(0)=0$. 
	For general $m$, 
	$S_m''(0)$ 
	depends on $\alpha_1$
	in the following way:
	\begin{equation*}
		\frac{2\upgamma S_m''(0)}{1-e^{\upgamma}}
		=
		1+e^{\upgamma}
		-
		\left( e^{\upgamma}-1 \right)
		\frac{C_1 A^2+D_1}{(C_2A+D_2)^2},
		\qquad 
		A:=e^{-\upgamma \alpha_1},
	\end{equation*}
	where $C_{1,2}$ and $D_{1,2}$ do not depend on $\alpha_1$.
	Differentiating this with respect to $A$, we obtain
	\begin{equation}
		\label{S_double_prime_proof}
		\frac{2\upgamma}{1-e^{\upgamma}}
		\frac{\partial S_m''(0)}{\partial A}
		=
		2(e^{\upgamma}-1)
		\frac{C_2D_1-C_1D_2A}{(C_2A+D_2)^{3}}.
	\end{equation}
	The denominator is equal to the cube of
	\begin{equation*}
		C_2A+D_2
		=
		\sum_{j=1}^{m}
		e^{-\upgamma \alpha_j}
		\left( e^{\upgamma s_j}-e^{\upgamma s_{j-1}} \right),
	\end{equation*}
	which is positive by 
	\eqref{piecewise_f_1}.
	Therefore, the sign of the left-hand side of \eqref{S_double_prime_proof}
	is the same as the sign of
	\begin{align*}
		&
		C_2D_1
		-
		C_1D_2A
		\\&
		\hspace{20pt}=
		\left( e^{\upgamma s_1}-1 \right)
		\sum_{j=2}^{m}
		e^{-2\upgamma\alpha_j}\left( e^{2\upgamma s_j}-e^{2\upgamma s_{j-1}} \right)
		-
		\left( e^{2\upgamma s_1}-1 \right)e^{-\upgamma\alpha_1}
		\sum_{j=2}^{m}
		e^{-\upgamma\alpha_j}\left( e^{\upgamma s_j}-e^{\upgamma s_{j-1}} \right)
		\\&
		\hspace{20pt}=
		\left( e^{\upgamma s_1}-1 \right)
			\sum_{j=2}^{m}
			e^{-\upgamma\alpha_j}\left( e^{\upgamma s_j}-e^{\upgamma s_{j-1}} \right)
			\Bigl[
				e^{-\upgamma\alpha_j}
				\left( e^{\upgamma s_j}+e^{\upgamma s_{j-1}} \right)
				-
				e^{-\upgamma\alpha_1}
				\left( e^{\upgamma s_1}+1 \right)
			\Bigr].
	\end{align*}
	From \eqref{piecewise_f_1} we see that each summand above is positive.
	Therefore, $S_m''(0)$ decreases in $A=e^{-\upgamma\alpha_1}$, hence it increases in $\alpha_1$.
	Since our goal is to show that $S_m''(0)>0$ and we have $\alpha_1>\alpha_2$, it's
	enough to prove $S_m''(0)\geq 0$ when $\alpha_1=\alpha_2$. However, the latter
	corresponds to reducing $m$ by one. 
	Hence, by induction, we are done with the case of a piecewise constant
	$\mathsf{f}$.

	\medskip
	
	For an arbitrary decreasing $\mathsf{f}$, let $\mathsf{f}_m$ be a sequence of
	piecewise constant weakly decreasing functions converging to $\mathsf{f}$,
	such that $S_{m}(w)\rightarrow S(w)$ 
	uniformly on compact
	sets away from $[e^{-\upgamma
	(\mathsf{u}+1)},e^{-\upgamma(\mathsf{u}-\mathsf{f}(0))}]$. 

	From $S_{m}''(0)>0$ we have $S''(0)\geq 0$. 
	Suppose $S''(0)=0$,
	i.e., $0$ is a double critical point.
	Since $0$ is a simple critical point of $S_{m}$,
	by Hurwitz's theorem, if $m$ is large enough then
	$S_{m}$ must have a nonzero critical point in a 
	$\frac 12 e^{-\upgamma(\mathsf{f}(0)+1)}$ neighborhood of $0$.

	However, 
	\Cref{proposition:other_critical_points_piecewise} implies that all
	nonzero critical points of $S_{m}$ are larger than
	$e^{-\upgamma(\mathsf{u}_m+1)}$,
	where $\mathsf{u}_m$ is given by \eqref{u_correct_value_piecewise}
	for a piecewise constant $\mathsf{f}_m$.
	We have $\mathsf{u}_m\rightarrow \mathsf{u}$ because $\mathsf{f}_m\to\mathsf{f}$. 
	\Cref{lemma:u_obvious_inequalities}
	implies that $\mathsf{u}<\mathsf{f}(0)$. Hence
	\begin{equation*}
		e^{-\upgamma(\mathsf{u}+1)}>e^{-\upgamma(\mathsf{f}(0)+1)}.
	\end{equation*}
	It follows that if $m$ is large, all nonzero critical points of
	$S_{m}$ are larger than $\frac
	12e^{-\upgamma(\mathsf{f}(0)+1)}$, 
	so $S_{m}$ cannot have
	nonzero critical points in the $\frac 12e^{-\upgamma(\mathsf{f}(0)+1)}$
	neighborhood of $0$ as $m\rightarrow\infty$. This is a contradiction which
	completes the proof of the lemma.
\end{proof}

The steepest descent contour $\mathfrak{C}$ passing through the origin
in the vertical direction could in principle escape to infinity.
Let us show that this is not the case:

\begin{lemma}
	\label{lemma:behavior_at_infinity}
	We have $\lim_{|w|\to\infty}\Re S(w)>S(0)$.
\end{lemma}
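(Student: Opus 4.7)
The plan is to compute $\lim_{|w|\to\infty}\Re S(w)$ explicitly and then compare it against $S(0)=\upgamma(\mathsf{u}-1/2)$. Taking real parts circumvents any branch cut issue for the logarithm, since $\Re\log z=\log|z|$. For $|w|$ large, uniformly in $s\in[0,1]$,
\begin{equation*}
	\log|e^{-\upgamma s}-w|=\log|w|+O(|w|^{-1}),\qquad
	\log|e^{-\upgamma \mathsf{u}}-we^{\upgamma(s-\mathsf{f}(s))}|=\log|w|+\upgamma(s-\mathsf{f}(s))+O(|w|^{-1}).
\end{equation*}
The $\log|w|$ contributions cancel when the two integrals in \eqref{S_action_function_definition} are subtracted, so the limit exists and
\begin{equation*}
	\lim_{|w|\to\infty}\Re S(w)=-\upgamma\int_0^1(s-\mathsf{f}(s))\,ds=-\frac{\upgamma}{2}+\upgamma\int_0^1\mathsf{f}(s)\,ds.
\end{equation*}

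Writing $\overline{\mathsf{f}}:=\int_0^1\mathsf{f}(s)\,ds$, the claim reduces to $\overline{\mathsf{f}}>\mathsf{u}$. Using the closed-form expression \eqref{u_intro} for $\mathsf{u}$ and exponentiating, this is equivalent to
\begin{equation*}
	\int_0^1 e^{\upgamma s}\,ds<e^{\upgamma\overline{\mathsf{f}}}\int_0^1 e^{\upgamma(s-\mathsf{f}(s))}\,ds=\int_0^1 e^{\upgamma s}\,e^{\upgamma(\overline{\mathsf{f}}-\mathsf{f}(s))}\,ds.
\end{equation*}
The function $s\mapsto e^{\upgamma s}$ is strictly increasing, while $s\mapsto e^{\upgamma(\overline{\mathsf{f}}-\mathsf{f}(s))}$ is weakly increasing (because $\mathsf{f}$ is weakly decreasing and $\upgamma>0$) and not almost-everywhere constant (this is where the nonconstancy part of Assumption~\ref{ass:function} enters). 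Chebyshev's integral inequality for comonotone functions then gives the strict bound
\begin{equation*}
	\int_0^1 e^{\upgamma s}\,e^{\upgamma(\overline{\mathsf{f}}-\mathsf{f}(s))}\,ds>\int_0^1 e^{\upgamma s}\,ds\cdot\int_0^1 e^{\upgamma(\overline{\mathsf{f}}-\mathsf{f}(s))}\,ds,
\end{equation*}
and Jensen's inequality applied to the convex function $e^x$ yields $\int_0^1 e^{\upgamma(\overline{\mathsf{f}}-\mathsf{f}(s))}\,ds\ge e^{\upgamma\int_0^1(\overline{\mathsf{f}}-\mathsf{f}(s))\,ds}=1$. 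Combining these two bounds gives precisely the required strict inequality.

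The main (mild) subtlety is ensuring strictness in Chebyshev's inequality; this is exactly where the nonconstancy of $\mathsf{f}$ is used, and without it the statement of the lemma would fail (one would only get $\lim_{|w|\to\infty}\Re S(w)\ge S(0)$ with equality precisely when $\mathsf{f}$ is constant, corresponding to a degenerate top row). The remainder is a direct calculation with no real obstacles.
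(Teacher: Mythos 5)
Your proof is correct and follows essentially the same route as the paper: both compute $\lim_{|w|\to\infty}\Re S(w)=\upgamma(\bar{\mathsf{f}}-\tfrac12)$, reduce the claim to $\bar{\mathsf{f}}>\mathsf{u}$, and establish that by combining Jensen's inequality for the exponential with a Chebyshev-type correlation inequality for monotone functions. The only difference is cosmetic — the paper applies Jensen first (to $e^{-\upgamma\mathsf{f}}$ against the tilted probability measure $d\mu\propto e^{\upgamma s}\,ds$) and the monotone-correlation step second, whereas you reverse the order — and your handling of strictness via the nonconstancy of $\mathsf{f}$ is, if anything, slightly more explicit than the paper's.
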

\begin{proof}
	From \eqref{S_action_function_definition} we have
	\begin{equation*}
		\lim_{|w|\to\infty}\Re S(w)
		=
		\lim_{|w|\to\infty}
		\int_0^1
		\log\left|
		\frac{w-e^{-\upgamma s}}{we^{\upgamma(s-\mathsf{f}(s))}-e^{-\upgamma \mathsf{u}}}
		\right|ds
		=
		\upgamma\int_0^1\left( \mathsf{f}(s)-s \right)ds.
	\end{equation*}
	Recall that $S(0)=\upgamma(\mathsf{u}-\frac{1}{2})$, so we need to show that
	$\bar{\mathsf{f}}:=\int_0^1\mathsf{f}(s)ds>\mathsf{u}$. 
	From the definition of $\mathsf{u}$ \eqref{u_correct_value},
	the desired inequality is equivalent to
	\begin{equation*}
		e^{-\upgamma \,\bar{\mathsf{f}}}
		<
		\int_0^1 e^{-\upgamma \mathsf{f}(s)}d\mu(s),
	\end{equation*}
	where $d\mu(s)=\dfrac{\upgamma e^{\upgamma s}}{e^{\upgamma}-1}ds$ is a probability density
	on $[0,1]$. Thus, by Jensen's inequality for the strictly convex function $s\mapsto e^{-\upgamma s}$ we have
	\begin{equation*}
		\int_0^1 e^{-\upgamma \mathsf{f}(s)}d\mu(s)
		>
		\exp
		\left\{ -\upgamma \int_0^1 \mathsf{f}(s)d\mu(s) \right\}.
	\end{equation*}
	It remains to show that 
	\begin{equation}
		\label{behavior_at_infinity_proof}
		\int_0^1\mathsf{f}(s)d\mu(s)<\bar{\mathsf{f}}=\int_0^1\mathsf{f}(s)ds.
	\end{equation}
	Observe that the density of $d\mu(s)$ is increasing and $\mathsf{f}(s)$ is decreasing.
	Thus, in the left-hand side of \eqref{behavior_at_infinity_proof} 
	smaller values of $\mathsf{f}(s)$ are integrated with higher probabilistic weight
	coming from $d\mu(s)$ than in the right-hand side in which all values 
	of $\mathsf{f}(s)$ are integrated with uniform weight. 
	This implies \eqref{behavior_at_infinity_proof} and completes the proof.
\end{proof}

Therefore, the steepest descent contour $\mathfrak{C}$ emanating from $0$
must intersect the real line again.
The next lemma specifies where this point of intersection is:
\begin{lemma}
	\label{lemma:intersect_real_line}
	The contour $\mathfrak{C}=\left\{ w\colon \Im S(w)=0 \right\}$ passing through $0$
	intersects the real line again at a point between $e^{-\upgamma}$ and $1$.
\end{lemma}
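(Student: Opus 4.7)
The plan is to study the boundary values $h(x) := \Im S(x+i0^+)$ of $\Im S$ on the real axis and combine this with a Jordan-curve/maximum-principle argument to locate where $\mathfrak{C}^+$ (the upper half of $\mathfrak{C}$ starting at $0$) returns to $\mathbb{R}$.

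First, I would note that $S$ is analytic in the upper half-plane $\mathbb{H}$, because for $w \in \mathbb{H}$ the arguments $e^{-\upgamma s} - w$ and $e^{-\upgamma \mathsf{u}} - w e^{\upgamma(s-\mathsf{f}(s))}$ of the logarithms in \eqref{S_action_function_definition} lie in the open lower half-plane, away from the principal branch cut. Using $\Im \log(a - i 0^+) = -\pi\,\mathbf{1}_{a<0}$, one directly computes
\begin{equation*}
h(x) = \pi\bigl(|A_2(x)| - |A_1(x)|\bigr), \qquad A_1(x) := \{s : e^{-\upgamma s} < x\},\ A_2(x) := \{s : e^{-\upgamma(\mathsf{u}+s-\mathsf{f}(s))} < x\}.
\end{equation*}
Since $s \mapsto e^{-\upgamma s}$ has range $[e^{-\upgamma},1]$ while $s\mapsto e^{-\upgamma(\mathsf{u}+s-\mathsf{f}(s))}$ has range $[e^{-\upgamma(\mathsf{u}+1)},e^{-\upgamma(\mathsf{u}-\mathsf{f}(0))}]$ (the latter interval containing the former by \Cref{lemma:u_obvious_inequalities}), and since nonconstancy of $\mathsf{f}$ forces the inequality $\mathsf{u} < \mathsf{f}(0)$ to be strict, I would verify that $h \equiv 0$ on $(-\infty, e^{-\upgamma(\mathsf{u}+1)}] \cup [e^{-\upgamma(\mathsf{u}-\mathsf{f}(0))}, \infty)$, that $h > 0$ on $(e^{-\upgamma(\mathsf{u}+1)}, e^{-\upgamma}]$, and that $h < 0$ on $[1, e^{-\upgamma(\mathsf{u}-\mathsf{f}(0))})$. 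By continuity of $h$ and the intermediate value theorem, $h$ has at least one zero in $(e^{-\upgamma}, 1)$.

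Next I would describe $\mathfrak{C}^+$. By \Cref{proposition:other_critical_points_general}, $S$ has no critical points in $\mathbb{H}$, so each connected component of $\{\Im S = 0\} \cap \mathbb{H}$ is a smooth simple arc; the maximum principle for the harmonic function $\Im S$ rules out closed loops in $\mathbb{H}$, so each such arc terminates at points of $\mathbb{R}\cup\{\infty\}$. By \Cref{lemma:S_double_prime} and the local expansion $S(w)-S(0) \approx \tfrac{1}{2}S''(0)w^2$ with $S''(0)>0$, the arc $\mathfrak{C}^+$ emerges from $0$ vertically, with $\Im S > 0$ immediately to its right (first quadrant near $0$) and $\Im S < 0$ to its left. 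Let $y^*$ denote the other endpoint of $\mathfrak{C}^+$.

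Finally, I would pin down $y^* \in (e^{-\upgamma}, 1)$ via Jordan-curve geometry. The arc $\mathfrak{C}^+$ divides $\mathbb{H}$ into two components, on which $\Im S$ has opposite sign; by continuity of $\Im S$ up to the real axis, $h \geq 0$ on the portion of $\mathbb{R}$ abutting the $\Im S > 0$ component and $h \leq 0$ on the portion abutting the $\Im S < 0$ component. If $y^* \leq 0$ or $y^* = \infty$, the $\Im S > 0$ component would abut a neighborhood of $x = 1$ on the real axis, but $h < 0$ there yields a contradiction. Hence $y^* > 0$, the $\Im S > 0$ component is bounded and abuts $(0, y^*)$, and the $\Im S < 0$ component abuts $(-\infty, 0) \cup (y^*, \infty)$. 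The conditions $h \geq 0$ on $(0, y^*)$ and $h \leq 0$ on $(y^*, \infty)$ combined with the sign structure above force $y^*$ to be precisely a zero of $h$ in $(e^{-\upgamma}, 1)$ where $h$ transitions from positive to negative, proving the lemma. The main obstacle is to handle possible additional components of $\{\Im S = 0\}$ inside $\mathbb{H}$, which would be further arcs between other zeros of $h$ and could alter the adjacency picture; I would address this by first verifying the claim for piecewise constant $\mathsf{f} = \mathsf{f}_m$, where the rational structure of $e^{\upgamma w S_m'(w)}$ in \Cref{lemma:critical_points_equation} explicitly controls the critical-point and level-set configuration, and then passing to general $\mathsf{f}$ by uniform convergence of $S_m$ to $S$ on compact subsets of $\mathbb{C}\setminus\mathbb{R}$, as in the proof of \Cref{lemma:S_double_prime}.
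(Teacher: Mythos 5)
Your first half --- computing the boundary values $h(x)=\Im S(x+\mathbf{i}0^+)$ from the arguments of the logarithms and reading off the sign and monotonicity structure of $h$ on the five intervals of the real line, hence a sign change in $(e^{-\upgamma},1)$ --- is exactly the paper's starting point (the paper phrases it as the behavior of $\Im S_m(x+\mathbf{i}\epsilon)$ for piecewise constant $\mathsf{f}_m$). The divergence, and the gap, is in how you pass from ``$h$ has a zero in $(e^{-\upgamma},1)$'' to ``the arc through $0$ lands at such a zero.'' Your Jordan-curve step rests on the claim that $\mathbb{H}\setminus\mathfrak{C}^+$ consists of two components on which $\Im S$ has constant opposite signs. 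That holds only if $\mathfrak{C}^+$ is the unique component of $\{\Im S=0\}$ in the upper half-plane, which you do not establish; if other arcs are present, neither the sign-coherence of the two components nor your exclusion of $y^*=\infty$ (which invokes the same sign-coherence) survives. You flag this yourself, but the proposed repair --- controlling the full level-set configuration for piecewise constant $\mathsf{f}_m$ and then passing to the limit --- is precisely the hard part and is left unexecuted; moreover, transferring the landing point of a specific arc under uniform convergence of $S_m\to S$ on compact sets away from $\mathbb{R}$ is itself delicate, since the arcs terminate on $\mathbb{R}$.

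The paper closes both holes by local arguments that need no global picture of the level set. Escape to infinity is excluded via \Cref{lemma:behavior_at_infinity}: since $S$ has no critical points in $\mathbb{H}$, $\Re S$ is strictly decreasing along $\mathfrak{C}^+$, whereas $\lim_{|w|\to\infty}\Re S(w)>S(0)$, so the arc must return to $\mathbb{R}$. The landing point $y^*$ is then classified in place: either $h$ changes sign transversally at $y^*$, which your own monotonicity table confines to $(e^{-\upgamma},1)$; or $y^*$ lies in an interval where $h$ vanishes identically, in which case $S$ continues analytically across that interval, two branches of $\{\Im S=0\}$ meet at $y^*$, and $y^*$ is forced to be a critical point --- so \Cref{proposition:other_critical_points_general} together with the sign structure of $h$ again places $y^*$ in $(e^{-\upgamma},1)$. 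I recommend replacing your global adjacency argument with this local dichotomy.
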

\begin{proof}
	We will prove the lemma by looking at the behavior of 
	$\Im S_m(x+\mathbf{i}\epsilon)$ for $x\in \mathbb{R}$ and small $\epsilon>0$.
	First, observe that 
	\begin{equation*}
		\Im \log(x+\mathbf{i}\epsilon)=\arg(x+\mathbf{i}\epsilon)
		\sim \pi\mathbf{1}_{x<0}.
	\end{equation*}
	For piecewise constant $\mathsf{f}=\mathsf{f}_m$ \eqref{piecewise_f_2} 
	this implies that
	\begin{equation*}
		\frac{1}{\pi}\Im S_m(x+\mathbf{i}\epsilon)
		\sim
		\int_0^1
		\mathbf{1}_{x>e^{-\upgamma s}}
		ds
		-
		\sum_{j=1}^{m}
		\int_{s_{j-1}}^{s_j}
		\mathbf{1}_{x>e^{-\upgamma(s+\mathsf{u}-\alpha_j)}}
		ds.
	\end{equation*}
	Note that the right-hand side is
	continuous in $x$.
	Moreover, it is piecewise linear in $\log x$.
	From \eqref{piecewise_f_1} it follows that
	\begin{equation*}
		e^{-\upgamma(s_m+\mathsf{u}-\alpha_m)}
		<
		e^{-\upgamma(s_{m-1}+\mathsf{u}-\alpha_m)}
		<
		e^{-\upgamma(s_{m-1}+\mathsf{u}-\alpha_{m-1})}
		<\ldots< 
		e^{-\upgamma(s_{1}+\mathsf{u}-\alpha_1)}
		<
		e^{-\upgamma(s_{0}+\mathsf{u}-\alpha_1)},
	\end{equation*}
	and
	\Cref{lemma:u_obvious_inequalities} 
	also implies that 
	\begin{equation*}
		e^{-\upgamma(\mathsf{u}+1)}
		=
		e^{-\upgamma(s_m+\mathsf{u}-\alpha_m)}
		<
		e^{-\upgamma}
		<
		1
		<
		e^{-\upgamma(s_0+\mathsf{u}-\alpha_1)}
		=
		e^{-\upgamma(\mathsf{u}-\mathsf{f}_m(0))}
		.
	\end{equation*}
	This implies that 
	$\frac{1}{\pi}\Im S_m(x+\mathbf{i}\epsilon)$ is
	(see \Cref{fig:plot_Im} for an illustration):
	\begin{enumerate}[\quad$\bullet$]
		\item zero when $x<e^{-\upgamma(s_m+\mathsf{u}-\alpha_m)}$;
		\item weakly decreasing when $e^{-\upgamma(s_m+\mathsf{u}-\alpha_m)}
			<x<e^{-\upgamma}$;
		\item weakly increasing when $e^{-\upgamma}<x<1$;
		\item weakly decreasing when $1<x<e^{-\upgamma(s_0+\mathsf{u}-\alpha_1)}$;
		\item and zero when $x>e^{-\upgamma(s_0+\mathsf{u}-\alpha_1)}$.
	\end{enumerate}
	We see that there must be at least one 
	point in $(e^{-\upgamma},1)$ for which
	$\Im S_m(x+\mathbf{i}\epsilon)=0$. 
	
	There are two cases. 
	First, assume that $\mathfrak{C}$ intersects $\mathbb{R}$ at a 
	critical point of $S_m$. 
	Then the limit of $\Im S_m(x+\mathbf{i}\epsilon)$ as $\epsilon\searrow0$
	is zero in a neighborhood (within $\mathbb{R}$) of this point of intersection. 
	From \Cref{proposition:other_critical_points_piecewise} we know that 
	there cannot be such an intersection of $\mathfrak{C}$ 
	with $(e^{-\upgamma(\mathsf{u}-\mathsf{f}_m(0))},+\infty)$
	or with $(-\infty,e^{-\upgamma(\mathsf{u}+1)})$
	(besides the original critical point $w_{\mathsf{cr}}=0$).
	From the above description of monotonicity of $\Im S_m(x+\mathbf{i}\epsilon)$
	it readily follows that a neighborhood 
	within $(e^{-\upgamma(\mathsf{u}+1)},e^{-\upgamma(\mathsf{u}-\mathsf{f}_m(0))})$
	where $\Im S_m(x)=0$ should be inside $(e^{-\upgamma},1)$,
	and therefore the point of intersection of $\mathfrak{C}$ with $\mathbb{R}$ 
	is also inside $(e^{-\upgamma},1)$.
	
	\begin{figure}[htpb]
		\centering
		\includegraphics[width=.6\textwidth]{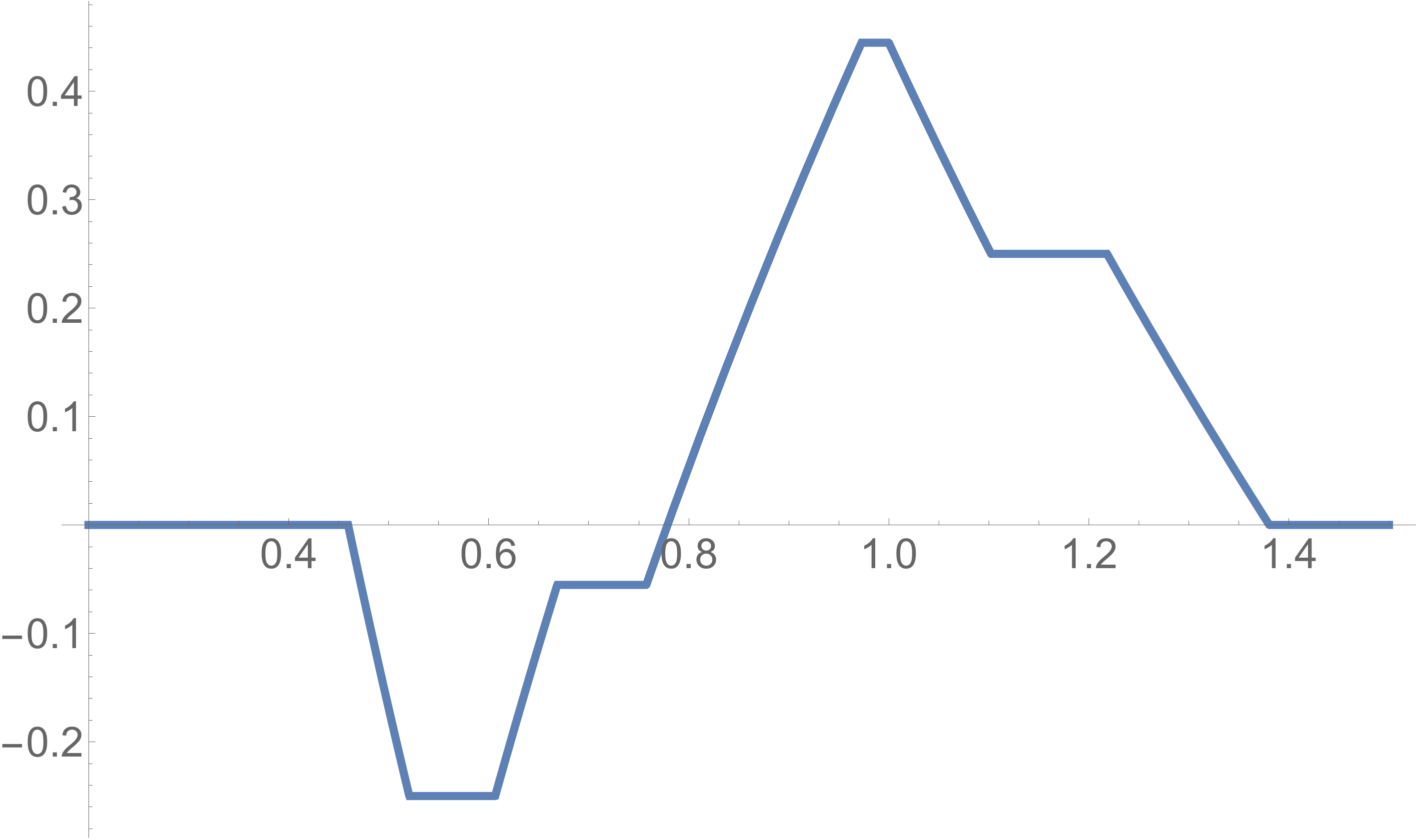}
		\caption{%
			The plot of $\frac{1}{\pi}\Im S(x+\mathbf{i}\epsilon)$
			for small $\epsilon>0$ and a piecewise constant function $\mathsf{f}=\mathsf{f}_m$. 
			The parameters are $m=4$,
			$\upgamma=\frac{1}{2}$ (so $e^{-\upgamma}\approx 0.606$),
			$\alpha=\left\{ \frac{6}{5}, 1, \frac{1}{2}, 0 \right\}$,
			and
			$s=\left\{ 0,\frac{1}{4},\frac{1}{2},\frac{3}{4},1 \right\}$.%
		}
		\label{fig:plot_Im}
	\end{figure}

	In the other case $\mathfrak{C}$ intersects $\mathbb{R}$ at a point which
	is not a critical point of $S_m$, and this corresponds to a transversal 
	intersection of the graph of $\Im S_m(x+\mathbf{i}\epsilon)$ with the $x$-axis
	(this case is shown in \Cref{fig:plot_Im}).
	The monotonicity of 
	$\Im S_m(x+\mathbf{i}\epsilon)$
	implies that such a transversal intersection can occur only inside $(e^{-\upgamma},1)$,
	which completes the proof in the case of 
	a piecewise constant function $\mathsf{f}=\mathsf{f}_m$.

	In the case of an arbitrary $\mathsf{f}$ we argue similarly to the proof of 
	\Cref{proposition:other_critical_points_general} and approximate 
	$\mathsf{f}$ by piecewise constant functions $\mathsf{f}_m$ as $m\to+\infty$.
	The segments on which $\Im S(x+\mathbf{i}\epsilon)$ is weakly monotone
	for small $\epsilon>0$ are the same as in the piecewise constant case, and so
	there is $x\in(e^{-\upgamma},1)$ where $\Im S(x+\mathbf{i}\epsilon)=0$.
	This $x$ corresponds to the intersection of the contour $\mathfrak{C}$ 
	with $\mathbb{R}$.
\end{proof}

\Cref{lemma:S_double_prime,lemma:behavior_at_infinity,lemma:intersect_real_line}
imply that the steepest descent contour $\mathfrak{C}$ looks as in \Cref{fig:steep_contour}.
Note that the numerator of the integrand in $A_i$ of \Cref{proposition:qA_i_best}
has zeros at $w=q^r$, $r=1,\ldots, N-K-1$, while the denominator
in this integrand does not have double zeros. 
This implies that the integrand in $A_i$ has no poles between
$q^{N-K-1}=e^{-\upgamma+\upgamma\frac{K+1}{N}}\sim e^{-\upgamma}$ and $q=e^{-\frac{\upgamma}{N}}\sim 1$.
Moreover, the integrand in $A_i$ also has no poles on the negative real line.
Therefore, the deformation of the integration contour $C'$ in formula
\eqref{qA_i_best} of \Cref{proposition:qA_i_best} to the steepest 
descent contour $\mathfrak{C}$ does not give rise to any residues.
In the sequel we will thus assume that the integration contour 
in \eqref{qA_i_best} is already~$\mathfrak{C}$.

\subsection{First level CLT}
\label{sub:level_1_CLT}

Before proving \Cref{thm:main_result} in full generality 
in \Cref{sec:completing_proof} below, 
let us use the results of \Cref{sub:critical_piecewise,sub:contour}
to establish the simpler particular case $K=1$ of this theorem.
That is, we are interested in establishing
a Central Limit Theorem for
the probability measure on $\mathbb{Z}$
given in \eqref{integral_formula_for_K1},
where the integration contour is $\mathfrak{C}$, the steepest descent one.
In the limit at $N\to+\infty$ the contribution to the integral outside a neighborhood of 
$w_{\mathsf{cr}}=0$ of size, say, $N^{-\frac{1}{6}}$, is negligible.
Indeed, outside this neighborhood 
$N\Re\left( S(w)-S(0) \right)$ is bounded from above by a negative constant times 
$N^{\frac{2}{3}}$, which dominates the $O(\sqrt N)$ terms in the exponent under the integral
(in, e.g., \eqref{O_sqrt_N_part}).
Therefore, we can focus on the behavior of the integral
over the part of the contour $\mathfrak{C}$
in a $N^{-\frac{1}{6}}$ neighborhood of $w_{\mathsf{cr}}=0$.

In this neighborhood of zero we change 
the variables as $w=-\mathbf{i}N^{-\frac{1}{2}}t$, $t\in \mathbb{R}$. 
The minus sign comes from the orientation of the contour $\mathfrak{C}$.
With this change of variables and with scaling $x=\mathsf{u}N+\xi N^{\frac{1}{2}}$,
$\xi\in \mathbb{R}$, \eqref{integral_formula_for_K1} becomes
\begin{equation}
\label{first_level_CLT_proof1}
	\begin{split}
		&P^{N}_{q,\nu}
		\left( \lambda^1_1=\mathsf{u}N+\xi N^{\frac{1}{2}}+1 \right)
		=
		-
		e^{
			-\frac{\upgamma}{2}-\upgamma\xi N^{\frac{1}{2}}-
			\upgamma N\left( \mathsf{u}-\frac{1}{2} \right)
		}
		\bigl(1-e^{-\upgamma}+O(1/N)\bigr)
		\\&\hspace{65pt}\times
		\frac{1}{2\pi\mathbf{i}}
		\int_{-N^{\frac{1}{3}}}^{N^{\frac{1}{3}}}
		\bigl(-\mathbf{i}N^{-\frac{1}{2}}dt\bigr)
		\frac
		{
			e^{-\frac{\upgamma (N-2) (N-1)}{2 N}}
			\prod_{r=1}^{N-2}(1+\mathbf{i}N^{-\frac{1}{2}}te^{\frac{\upgamma r}{N}})
		}
		{
			e^{-\upgamma \mathsf{u}N-\upgamma\xi N^{\frac{1}{2}}}
			\prod_{r=1}^N (
				1
				+
				\mathbf{i}N^{-\frac{1}{2}}te^{-\frac{\upgamma \nu_r}{N}+\frac{\upgamma r}{N}}
				e^{\upgamma \mathsf{u}+\upgamma\xi N^{-\frac{1}{2}}}
			)
		}.
	\end{split}
\end{equation}
Here we took the factors out of the products
in order to Taylor expand in the small variable $N^{-\frac{1}{2}}t$
for fixed $t\in\mathbb{R}$ (note that $r/N$ and $\nu_r/N$ stay bounded as $N\to+\infty$):
\begin{equation*}
	\begin{split}
		\prod_{r=1}^{N-2}(1+\mathbf{i}N^{-\frac{1}{2}}te^{\frac{\upgamma r}{N}})
		&=
		\exp
		\left\{  
			\sum_{r=1}^{N-2}
			\log
			\bigl(
				1+\mathbf{i}N^{-\frac{1}{2}}te^{\frac{\upgamma r}{N}}
			\bigr)
		\right\}
		\\
		&=
		\exp
		\left\{ 
			N^{-\frac{1}{2}}
			\sum_{r=1}^{N-2}
			\mathbf{i}te^{\frac{\upgamma r}{N}}
			+
			N^{-1}
			\sum_{r=1}^{N-2}
			\frac{t^2}{2}e^{\frac{2\upgamma r}{N}}
			+
			O(N^{-\frac{1}{2}})
		\right\}
		.
	\end{split}
\end{equation*}
In the denominator we also need to keep track of 
the term 
$e^{\upgamma \xi N^{-1/2}}=1+N^{-\frac{1}{2}}\upgamma\xi+O(N^{-1})$,
and so we have
\begin{equation*}
	\begin{split}
		&\prod_{r=1}^N (
			1
			+
			\mathbf{i}N^{-\frac{1}{2}}te^{-\frac{\upgamma \nu_r}{N}+\frac{\upgamma r}{N}}
			e^{\upgamma \mathsf{u}+\upgamma\xi N^{-\frac{1}{2}}}
		)
		=
		\exp
		\left\{ 
			\sum_{r=1}^N \log
			\bigl(
				1
				+
				\mathbf{i}N^{-\frac{1}{2}}t
				e^{\upgamma\left( \mathsf{u}+\frac{r}{N}-\frac{\nu_r}{N} \right)}
				e^{\upgamma\xi N^{-\frac{1}{2}}}
			\bigr)
		\right\}
		\\
		&\hspace{15pt}
		=
		\exp
		\left\{ 
			N^{-\frac{1}{2}}
			\sum_{r=1}^N
			\mathbf{i}t
			e^{\upgamma\left( \mathsf{u}+\frac{r}{N}-\frac{\nu_r}{N} \right)}
			+
			N^{-1}\sum_{r=1}^{N}
			\left( 
				\mathbf{i}t
				\upgamma\xi 
				e^{\upgamma\left( \mathsf{u}+\frac{r}{N}-\frac{\nu_r}{N} \right)}
				+
				\frac{t^2}{2}
				e^{2\upgamma\left( \mathsf{u}+\frac{r}{N}-\frac{\nu_r}{N} \right)}
			\right)
			+O(N^{-\frac{1}{2}})
		\right\}.
\end{split}
\end{equation*}
Every sum in the exponent in the previous two formulas
is a Riemann sum for the corresponding integral, and
combining all exponents in \eqref{first_level_CLT_proof1} together
we get an exponent of
\begin{align*}
	&
	-\frac{\upgamma}{2}-\upgamma\xi N^{\frac{1}{2}}-
	\upgamma N\left( \mathsf{u}-\frac{1}{2} \right)
	-\frac{\upgamma (N-2) (N-1)}{2 N}
	+\upgamma \mathsf{u}N+\upgamma\xi N^{\frac{1}{2}}
	+
	\mathbf{i}tN^{\frac{1}{2}}
	\int_0^1e^{\upgamma s}ds
	\\
	&\hspace{20pt}+
	\frac{t^2}{2}\int_0^1e^{2\upgamma s}ds
	-
	\mathbf{i}tN^{\frac{1}{2}}
	\int_0^1e^{\upgamma(\mathsf{u}+s-\mathsf{f}(s))}ds
	-\mathbf{i}t\upgamma\xi
	\int_0^1e^{\upgamma(\mathsf{u}+s-\mathsf{f}(s))}ds
	-
	\frac{t^2}{2}
	\int_0^1
	e^{2\upgamma(\mathsf{u}+s-\mathsf{f}(s))}ds
	\\&
	=
	\upgamma(1-N^{-1})
	-
	\mathbf{i}t\xi\left( e^\upgamma-1 \right)
	-
	\frac{t^2}{2}S''(0)
	,
\end{align*}
where we used the definition of $\mathsf{u}$
\eqref{u_correct_value} and the fact that
\begin{equation}
	\label{s_double_prime}
	S''(0)
	=
	-\int_0^1e^{2s\upgamma}ds
	+
	\int_0^1
	e^{2\upgamma(\mathsf{u}+s-\mathsf{f}(s))}ds
\end{equation}
(which immediately follows from \eqref{S_action_function_definition}).
Therefore, we can continue \eqref{first_level_CLT_proof1} 
and evaluate the Gaussian integral
\begin{align*}
	P^{N}_{q,\nu}
	\left( \lambda^1_1=\mathsf{u}N+\xi N^{\frac{1}{2}}+1 \right)
	&=
	\frac{\left( 1+o(1) \right)}{N^{\frac{1}{2}}}
	\frac{e^{\upgamma}-1}{2\pi}
	\int_{-\infty}^{+\infty}
	\exp
	\left\{ 
		-
		\mathbf{i}t\xi\left( e^\upgamma-1 \right)
		-
		\frac{t^2}{2}S''(0)
	\right\}
	dt
	\\&=
	\frac{\left( 1+o(1) \right)}{N^{\frac{1}{2}}}
	\frac{e^{\upgamma}-1}{\sqrt{2\pi S''(0)}}
	\exp\left\{ -\xi^2\frac{(e^{\upgamma}-1)^2}{2S''(0)} \right\}
	,
\end{align*}
which leads to a Gaussian density in the variable $\xi\in\mathbb{R}$
(the factor $N^{-\frac{1}{2}}$
corresponds to the rescaling of the space from discrete to continuous).
Thus, we have established the particular case $K=1$ of 
\Cref{thm:main_result}:
\begin{proposition}[First level CLT]
	\label{prop:first_level_CLT}
	Under Assumptions \ref{ass:function} and \ref{ass:q},
	as $N\to+\infty$, we have the following convergence in distribution
	of the location $\lambda^1_1=\lambda^1_1(N)$ of the vertical lozenge 
	on the first level:
	\begin{equation*}
		\frac{\lambda^1_1-N\mathsf{u}}{\sqrt N}\to
		\mathsf{L}^{1}_1\sim
		\mathsf{N}(0,\upsigma^2).
	\end{equation*}
	Here the global location $\mathsf{u}$ 
	and the limiting variance $\upsigma^2$
	are given by \eqref{u_intro}
	and \eqref{sigma_square}, respectively.
\end{proposition}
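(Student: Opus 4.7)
The plan is to carry out a steepest-descent (Laplace) analysis of the single contour integral for $P^N_{q,\nu}(\lambda^1_1 = x+1)$ displayed in \eqref{integral_formula_for_K1}, evaluated at $x = \mathsf{u} N + \xi \sqrt{N}$. First I would deform the contour $C'$ to the steepest descent contour $\mathfrak{C}$ constructed in \Cref{sub:contour}; this is permissible because, as observed right after \Cref{lemma:intersect_real_line}, the zeros $q^r$ in the numerator cancel the nearest poles of the denominator so no residues are picked up. The prefactor in front of the integral contributes $-(1-e^{-\upgamma})\,e^{-N\upgamma(\mathsf{u}-1/2)-\upgamma\xi\sqrt N}(1+o(1))$, and by \Cref{lemma:S_at_0} together with \eqref{u_correct_value} the factor $e^{-N\upgamma(\mathsf{u}-1/2)}$ exactly cancels $e^{NS(0)}$, which is what the saddle point will produce.

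Next I would localize the integral to a window $|w|\le N^{-1/6}$ around $w_{\mathsf{cr}}=0$. Outside this window the integrand is bounded by $\exp(N\,\Re(S(w)-S(0)) + O(\sqrt N))$, and the exponent is bounded above by $-cN^{2/3}$ for some $c>0$: this uses the strict positivity $S''(0)>0$ from \Cref{lemma:S_double_prime} near zero and the uniform strict negativity of $\Re(S(w)-S(0))$ on the remainder of $\mathfrak{C}$, which follows from \Cref{lemma:behavior_at_infinity,lemma:intersect_real_line} together with the fact that $S$ has no other critical points in the upper half plane by \Cref{proposition:other_critical_points_general}. The tails are therefore superexponentially small.

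Inside the window I would rescale $w = -\mathbf{i} N^{-1/2} t$ with $t\in\mathbb R$ (the minus sign is set by the orientation of $\mathfrak{C}$) and Taylor expand each $\log\!\bigl(1+\mathbf{i} N^{-1/2} t \, e^{\upgamma r/N}\bigr)$ and $\log\!\bigl(1+\mathbf{i} N^{-1/2} t\, e^{\upgamma(\mathsf{u}+r/N-\nu_r/N)}e^{\upgamma\xi/\sqrt N}\bigr)$ to second order. The resulting averages $N^{-1}\sum_{r=1}^N(\cdot)$ are Riemann sums for integrals over $[0,1]$, where the $o(N^{-1/2})$ convergence $f_N\to\mathsf{f}$ in \Cref{ass:function} is precisely strong enough to replace $\nu_r/N$ by $\mathsf{f}(r/N)$ inside the exponentials with overall error $o(1)$ uniformly for $t$ in compact sets. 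The $O(N)$ and $O(\sqrt N)$ divergent pieces cancel against the prefactor via the defining identities $S(0)=\upgamma(\mathsf{u}-\tfrac12)$ and $S'(0)=0$ (cf. \Cref{lemma:S_at_0} and \eqref{u_correct_value}), leaving the clean exponent
\begin{equation*}
-\mathbf{i}\, t\, \xi\, (e^{\upgamma}-1) - \tfrac{1}{2}\, t^2\, S''(0) + o(1).
\end{equation*}

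Finally, I would evaluate the Gaussian integral in $t\in\mathbb R$ and interpret the prefactor $N^{-1/2}$ arising from the change of variables as the lattice spacing converting a probability mass function into a density, obtaining
\begin{equation*}
P^{N}_{q,\nu}\!\bigl(\lambda^{1}_{1}=\mathsf{u}N+\xi\sqrt N+1\bigr) \sim \frac{1}{\sqrt N}\,\frac{e^{\upgamma}-1}{\sqrt{2\pi S''(0)}}\,\exp\!\left(-\frac{\xi^{2}(e^{\upgamma}-1)^{2}}{2\,S''(0)}\right).
\end{equation*}
Identifying $\upsigma^2 = S''(0)/(e^{\upgamma}-1)^2$ via the explicit expression \eqref{s_double_prime} for $S''(0)$ matches the target variance \eqref{sigma_square}, and convergence in distribution follows by a standard local-limit-to-CLT argument. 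The main technical obstacle I anticipate is the Riemann-sum approximation involving the possibly discontinuous profile $\mathsf{f}$; this is exactly why \Cref{ass:function} is phrased in the uniform sup norm with the $o(N^{-1/2})$ rate, and that rate is sharp enough to absorb the $\sqrt N$ that appears in the exponent from the $-\upgamma\xi/\sqrt N$ shift inside the logarithms.
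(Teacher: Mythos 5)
Your proposal is correct and follows essentially the same route as the paper's own argument in \Cref{sub:level_1_CLT}: deformation to the steepest descent contour $\mathfrak{C}$, localization to an $N^{-1/6}$ window around $w_{\mathsf{cr}}=0$ using \Cref{lemma:S_double_prime,lemma:behavior_at_infinity,lemma:intersect_real_line}, the rescaling $w=-\mathbf{i}N^{-1/2}t$, Taylor expansion with Riemann-sum approximation, cancellation of the divergent exponents via $S(0)=\upgamma(\mathsf{u}-\tfrac12)$ and $S'(0)=0$, and evaluation of the resulting Gaussian integral. Your identification of the $o(N^{-1/2})$ rate in \Cref{ass:function} as exactly what is needed to control the replacement of $\nu_r/N$ by $\mathsf{f}(r/N)$ in the $O(\sqrt N)$-order sums is precisely the point the paper relies on.
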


\section{Completing the proof of \Cref{thm:main_result}}
\label{sec:completing_proof}

In this section we finish the proof of our main result,
\Cref{thm:main_result}. 
We first consider the asymptotics of the distribution 
at each level $K=1,2,\ldots $ 
(%
	using the formulas of \Cref{thm:Pet14} and \Cref{proposition:qA_i_best} 
	together with our results 
	from \Cref{sec:asymptotic_mean,sec:critical_points}%
),
and then explain how one obtains a joint ICLT for all $K(K+1)/2$ lozenges
$\boldsymbol\lambda=\bigl\{ \lambda^{k}_j\colon k=1,\ldots,K,\,j=1,\ldots, k \bigr\}$.

\subsection{CLT at level $K$}
\label{sub:K_level_CLT}

Let us begin with the behavior of the prefactor in \eqref{formula_Pet14}:
\begin{lemma}
	\label{lemma:prefactor_behavior}
	Fix $K\ge1$. Let $\varkappa=(\varkappa_1\ge \ldots \ge \varkappa_K)$
	depend on $N$ as $\varkappa_j=\mathsf{u}N+\xi_j N^{\frac{1}{2}}$,
	where 
	$\mathsf{u}$ is given by \eqref{u_correct_value},
	and
	$\xi_1\ge \ldots\ge\xi_K $ are fixed real numbers. 
	Then as $N\to+\infty$ the prefactor in front of the determinant in 
	\eqref{formula_Pet14} behaves as
	\begin{multline*}
		s_{\varkappa}(1,q,\ldots,q^{K-1})\,
		(-1)^{K(N-K)}
		q^{(N-K)|\varkappa|}
		q^{-K(N-K)(N+2)/2}
		=
		\bigl(1+o(1)\bigr)
		(-1)^{K(N-K)}
		N^{\frac{K(K-1)}{4}}
		\\\times
		\exp
		\Bigl\{
			\tfrac{1}{2}\upgamma K(\mathsf{u}(K+1)-K+2)
			-\upgamma N^{\frac{1}{2}}\sum_{i=1}^{K}\xi_i
			-\upgamma K N\bigl(\mathsf{u}-\tfrac{1}{2}\bigr)
		\Bigr\}
		\prod_{1\le i<j\le K}
		\frac{
			\xi_i
			-\xi_j
		}
		{j-i}.
	\end{multline*}
\end{lemma}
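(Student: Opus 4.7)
The plan is to substitute the product formula \eqref{Schur_q_specialization} for the Schur value, write $q=e^{-\upgamma/N}$ and $\varkappa_j=\mathsf{u}N+\xi_jN^{1/2}$ throughout, Taylor expand every factor in $1/N$, and then collect all exponential contributions. The computation is entirely routine; the only point requiring care is keeping track of the cancellations of the leading $N$ and $\sqrt{N}$ pieces against the definition \eqref{u_correct_value} of $\mathsf{u}$.

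I would handle the three factors separately. First, write
\begin{equation*}
s_{\varkappa}(1,q,\dots,q^{K-1})=\prod_{1\le i<j\le K}\frac{q^{\varkappa_i-i}-q^{\varkappa_j-j}}{q^{-i}-q^{-j}}
\end{equation*}
and expand each factor. For the numerator, $q^{\varkappa_i-i}-q^{\varkappa_j-j}=e^{-\upgamma\mathsf{u}}\bigl(e^{-\upgamma\xi_i/\sqrt{N}+\upgamma i/N}-e^{-\upgamma\xi_j/\sqrt{N}+\upgamma j/N}\bigr)$, so at leading order this equals $-\upgamma e^{-\upgamma\mathsf{u}}(\xi_i-\xi_j)N^{-1/2}\bigl(1+O(N^{-1/2})\bigr)$. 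For the denominator, $q^{-i}-q^{-j}=\upgamma(i-j)N^{-1}+O(N^{-2})$. Taking the ratio and then the product over the $K(K-1)/2$ pairs produces the factor $N^{K(K-1)/4}\prod_{i<j}(\xi_i-\xi_j)/(j-i)$ together with an exponential $\exp\bigl\{-\tfrac12\upgamma\mathsf{u}K(K-1)\bigr\}$ that I carry to the final exponent.

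Second, handle the two monomial factors. Using $q^{(N-K)|\varkappa|}=\exp\bigl\{-\tfrac{\upgamma(N-K)}{N}|\varkappa|\bigr\}$ with $|\varkappa|=K\mathsf{u}N+\sqrt{N}\sum_i\xi_i$, one gets $\exp\bigl\{-\upgamma KN\mathsf{u}+\upgamma K^2\mathsf{u}-\upgamma\sqrt N\sum_i\xi_i+o(1)\bigr\}$. Similarly $q^{-K(N-K)(N+2)/2}=\exp\bigl\{\tfrac12\upgamma KN+\upgamma K-\tfrac12\upgamma K^2+o(1)\bigr\}$, using $(N-K)(N+2)/(2N)=N/2+1-K/2+O(1/N)$.

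Finally, I would combine all three exponentials. The $N$-scale term is $-\upgamma KN(\mathsf{u}-\tfrac12)$; the $\sqrt N$-scale term is $-\upgamma\sqrt N\sum_i\xi_i$; and the constant piece is
\begin{equation*}
\upgamma K^2\mathsf{u}-\tfrac12\upgamma\mathsf{u}K(K-1)+\upgamma K-\tfrac12\upgamma K^2
=\tfrac12\upgamma K\bigl(\mathsf{u}(K+1)-K+2\bigr),
\end{equation*}
which matches the claim. The sign $(-1)^{K(N-K)}$ is just copied over. The main (and only) obstacle is making sure the $O(N^{-1/2})$ remainders in the Schur expansion multiply out to $1+o(1)$ after taking the product of $K(K-1)/2$ factors; since $K$ is fixed this is immediate. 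No result beyond \eqref{Schur_q_specialization}, Assumption \ref{ass:q}, and the definition of $\mathsf{u}$ is needed.
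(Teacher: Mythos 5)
Your proposal is correct and follows essentially the same route as the paper: substitute the product formula \eqref{Schur_q_specialization}, factor out $e^{-\upgamma\mathsf{u}}$ from each numerator of the Schur product to get $N^{K(K-1)/4}\prod_{i<j}(\xi_i-\xi_j)/(j-i)$ times $e^{-\frac12\upgamma\mathsf{u}K(K-1)}$, and expand the two monomial factors via $|\varkappa|=K\mathsf{u}N+\sqrt N\sum_i\xi_i$; your bookkeeping of the constant, $\sqrt N$-, and $N$-scale exponents checks out. (The only caveat, shared with the paper's own formulation, is that the multiplicative $(1+o(1))$ form is meaningful only when the $\xi_j$ are pairwise distinct; the paper treats coinciding $\xi_j$'s separately later.)
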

\begin{proof}
	We have by \eqref{Schur_q_specialization}:
	\begin{align*}
		s_{\varkappa}(1,q,\ldots,q^{K-1} )
		&=
		\prod_{1\le i<j\le K}
		\frac{q^{\varkappa_i-i}-q^{\varkappa_j-j}}
		{q^{-i}-q^{-j}}
		\\&=
		\prod_{1\le i<j\le K}
		\frac{
			e^{-\upgamma\mathsf{u}-\frac{\upgamma\xi_i}{\sqrt N}+\frac{i \upgamma}{N}}
			-
			e^{-\upgamma\mathsf{u}-\frac{\upgamma\xi_j}{\sqrt N}+\frac{j \upgamma}{N}}
		}
		{e^{\frac{i \upgamma}{N}}-e^{\frac{j \upgamma}{N}}}
		\\&=
		e^{-\frac{1}{2}\upgamma\mathsf{u}K(K-1)}
		\prod_{1\le i<j\le K}
		\frac{
			-\frac{\upgamma\xi_i}{\sqrt N}
			+\frac{\upgamma\xi_j}{\sqrt N}
			+O(N^{-1})
		}
		{\frac{i \upgamma}{N}-\frac{j \upgamma}{N}+O(N^{-2})}
		\\
		&=
		\bigl(1+o(1)\bigr)
		e^{-\frac{1}{2}\upgamma\mathsf{u}K(K-1)}
		N^{\frac{K(K-1)}{4}}
		\prod_{1\le i<j\le K}
		\frac{
			\xi_i
			-\xi_j
		}
		{j-i}
		,
	\end{align*}
	and, moreover,
	$|\varkappa|=\mathsf{u}KN+N^{\frac{1}{2}}\sum_{i=1}^{K}\xi_i$.
	This implies the claim.
\end{proof}

Let us now address the asymptotic behavior of the individual 
entries $A_i(\varkappa_j-j)$ of the 
$K\times K$
determinant in \eqref{formula_Pet14}. 
The analysis is similar to \Cref{sub:level_1_CLT}, but some care is required to
pass from the asymptotics of the individual entries to the asymptotics of the
determinant. 
This dictates our formulation of \Cref{lemma:general_q_Ai_behavior} below.

We need to introduce some notation. 
Define $c_{i,l}$, $i=1,\ldots,K $, $l=0,\ldots,K-1 $, to be the 
coefficient of $y^l$ in the polynomial 
$(1+y)^{i-1}(1+e^{\upgamma}y)^{K-i}$. Also set
\begin{equation*}
	G_l(\xi):=\frac{e^{\upgamma}-1}{2\pi }\int_{-\infty}^{+\infty}
	\exp\left\{ 
		-\mathbf{i}t\xi(e^{\upgamma}-1)-\frac{t^2}{2}S''(0)
	\right\}
	(\mathbf{i}t)^l 
	dt
	,\qquad l=0,1,2,\ldots, 
\end{equation*}
where $S''(0)$ is given by \eqref{s_double_prime}.
\begin{lemma}
	\label{lemma:general_q_Ai_behavior}
	Fix $K\ge1$ and let
	$x=\mathsf{u}N+\xi N^{\frac{1}{2}}$. 
	Then $A_i(x)$ given by \eqref{qA_i_best} has the following asymptotics as $N\to+\infty$:
	\begin{multline}
		\label{general_qA_i_formulation}
		A_i(x)
		=
		\bigl(1+o(1)\bigr)
		(-1)^{N-K}
		N^{-\frac{1}{2}}
		\exp
		\left\{ 
			\upgamma N\bigr(\mathsf{u}-\tfrac{1}{2}\bigr)
			+\upgamma\xi N^{\frac{1}{2}}
			+\upgamma\bigl( -\tfrac{1}{2}+K-K\mathsf{u} \bigr)
		\right\}
		\\\times
		\sum_{l=0}^{K-1}c_{i,l}
		N^{-\frac{l}{2}}
		e^{\upgamma\mathsf{u}l}
		G_l(\xi)
		\bigl(1+\epsilon_{i,l,\xi}(N)\bigr),
	\end{multline}
	where the remainder $o(1)$ in front does not depend on $i$ while each
	$\epsilon_{i,l,\xi}(N)=o(1)$ may depend on $i$, $l$, and $\xi$.
\end{lemma}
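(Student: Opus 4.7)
The plan is to apply the steepest descent method to the contour integral in \eqref{qA_i_best}, following the template of the first-level CLT from \Cref{sub:level_1_CLT}. The only new ingredient compared to $K=1$ is the finite product $\prod_{r\in I_{i,N,K}}(q^x - wq^{-r})$ of $K-1$ factors, which after the local change of variables becomes a polynomial of degree $K-1$ in the local variable $t$ and is precisely what produces the sum over $l$ in \eqref{general_qA_i_formulation}.

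First I would deform the contour $C'$ in \eqref{qA_i_best} to the steepest descent contour $\mathfrak{C}$ through $w_{\mathsf{cr}}=0$; as noted at the end of \Cref{sub:contour} this picks up no residues. Then I localize to $|t|\le N^{1/3}$ under the parametrization $w = -\mathbf{i}N^{-1/2}t$: outside this neighborhood, \Cref{lemma:S_double_prime,lemma:behavior_at_infinity,lemma:intersect_real_line} give $\Re(S(w)-S(0))\le -c N^{-1/3}$ on $\mathfrak{C}$ for some $c>0$, while the $K-1$ extra factors of the finite product stay uniformly bounded there, so the tail of the integral is super-polynomially small.

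Inside the localized region I would expand the three pieces of the integrand separately. By the Riemann-sum asymptotics of \eqref{K1_int_expon1}--\eqref{K1_int_expon2}, the two large products yield
\begin{equation*}
\exp\Bigl\{NS(0) + \upgamma\xi N^{1/2} + \upgamma\bigl(K+\tfrac12\bigr) + \upgamma\mathsf{u} N - \mathbf{i}t\xi(e^{\upgamma}-1) - \tfrac{t^2}{2}S''(0) + o(1)\Bigr\}
\end{equation*}
uniformly in $|t|\le N^{1/3}$; the constant-order term shifts from the $K=1$ case only because the numerator product now has $N-K-1$ factors. For the finite product, using $q^{-r}\to 1$ for $r\le i-1$ and $q^{-r}\to e^{\upgamma}$ for $r\ge N-K+i+1$, one obtains
\begin{equation*}
\prod_{r\in I_{i,N,K}}(q^x-wq^{-r}) = e^{-\upgamma\mathsf{u}(K-1)}(1+y)^{i-1}(1+e^{\upgamma} y)^{K-i}\bigl(1+O(N^{-1/2})\bigr),\qquad y := \mathbf{i}N^{-1/2}t\,e^{\upgamma\mathsf{u}},
\end{equation*}
and the binomial expansion in $y$ produces $\sum_{l=0}^{K-1}c_{i,l}(\mathbf{i}t)^l N^{-l/2}e^{\upgamma\mathsf{u} l}$ by the very definition of $c_{i,l}$.

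Combining these expansions with the explicit prefactor $q^x(-1)^{N-K-1}(1-q^{N-K})/(2\pi\mathbf{i})$ and the Jacobian $-\mathbf{i}N^{-1/2}\,dt$, all constant-order exponentials collect precisely into $\exp\{\upgamma N(\mathsf{u}-\tfrac12)+\upgamma\xi N^{1/2}+\upgamma(-\tfrac12+K-K\mathsf{u})\}$ with overall sign $(-1)^{N-K}$, and the $(e^{\upgamma}-1)/(2\pi)$ factor joins each term of the sum to form the defining integral of $G_l(\xi)$. Extending the Gaussian integration from $[-N^{1/3},N^{1/3}]$ to $\mathbb{R}$ introduces only super-polynomially small errors. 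Uniformity of the outer $(1+o(1))$ in $i$ is immediate since the large products, the exponential prefactor, and the Gaussian integrand do not depend on $i$ at all, while the finite product is uniformly bounded in $i$ and in $t$ on the localized contour by a constant depending only on $K$, $\upgamma$, and $\mathsf{f}$. The main obstacle is the careful bookkeeping of the constant-order exponential contributions coming from the Riemann sum limits, from $q^x(1-q^{N-K})$, and from the $(K-1)$-fold finite product, to verify that they assemble into the precise exponential of the statement; this is routine once the localization is set up.
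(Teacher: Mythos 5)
Your proposal is correct and takes essentially the same route as the paper's proof: deform to $\mathfrak{C}$, localize near $w_{\mathsf{cr}}=0$ under $w=-\mathbf{i}N^{-1/2}t$, treat the two large products by Riemann-sum asymptotics exactly as in \Cref{sub:level_1_CLT}, and expand the finite product over $I_{i,N,K}$ into $\sum_l c_{i,l}(\mathbf{i}tN^{-1/2}e^{\upgamma\mathsf{u}})^l$ with $i$-dependent errors kept separate from the $i$-independent $o(1)$. One bookkeeping slip: in your intermediate display for the two large products you list both $NS(0)$ and $\upgamma\mathsf{u}N$, but $NS(0)=\upgamma N\bigl(\mathsf{u}-\tfrac12\bigr)$ already contains the term $\upgamma\mathsf{u}N$, so it is double-counted there; the correct constant-order exponent is $NS(0)+\upgamma\xi N^{1/2}+\upgamma\bigl(K+\tfrac12\bigr)$, and with that correction your final assembly (including the $e^{-\upgamma\mathsf{u}}$ from $q^x$, the $e^{-\upgamma\mathsf{u}(K-1)}$ from the finite product, and $1-q^{N-K}\to e^{-\upgamma}(e^{\upgamma}-1)$) does reproduce \eqref{general_qA_i_formulation} exactly.
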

\begin{proof}
	Arguing as in \Cref{sub:level_1_CLT}, we can restrict the
	integration to a small neighborhood of $w_{\mathsf{cr}}=0$, 
	and change the variables as $w=-\mathbf{i}tN^{-\frac{1}{2}}$, $t\in \mathbb{R}$.
	Then we have
	\begin{multline*}
		A_i(x)=
		\frac{q^{(K-N)x}q^{\frac{1}{2}(N-K)(N-K-1)}(-1)^{N-K-1}
		(1-q^{N-K})}{2\pi\mathbf{i}}
		\\
		\oint_{\mathfrak{C}}dw\,
		\frac{\prod_{r=1}^{N-K-1}(1-wq^{-r})}
		{\prod_{r=1}^N (1-wq^{-x+\nu_r-r})}
		\prod_{r\in I_{i,N,K}}(1-wq^{-x-r}),
	\end{multline*}
	where recall that
	$I_{i,N,K}=\left\{ 1,\ldots,i-1  \right\}\cup\left\{ N-K+i+1,\ldots,N  \right\}$.
	The prefactor scales as
	\begin{multline}
		\label{general_qA_i_proof0}
		(-1)^{N-K-1}
		q^{(K-N)x}q^{\frac{1}{2}(N-K)(N-K-1)}
		(1-q^{N-K})
		\\=
		\bigl(1+o(1)\bigr)
		(-1)^{N-K-1}
		\exp
		\left\{ 
			\upgamma N\bigr(\mathsf{u}-\tfrac{1}{2}\bigr)
			+\upgamma\xi N^{\frac{1}{2}}
			+\upgamma\bigl( \tfrac{1}{2}+K-K\mathsf{u} \bigr)
		\right\}
		(1-e^{-\upgamma}).
	\end{multline}
	The part of the integrand not depending on $i$ behaves as
	\begin{multline}
		\label{general_qA_i_proof1}
		\frac{\prod_{r=1}^{N-K-1}(1+\mathbf{i}tN^{-\frac{1}{2}}e^{\frac{\upgamma r}{N}})}
		{\prod_{r=1}^N (1+\mathbf{i}tN^{-\frac{1}{2}}
		e^{\upgamma\mathsf{u}+\upgamma\xi N^{-\frac{1}{2}}-\frac{\upgamma\nu_r}N+\frac{\upgamma r}{N}})}
		d(-\mathbf{i}tN^{-\frac{1}{2}})
		\\=
		-\bigl(1+o(1)\bigr)
		\mathbf{i}N^{-\frac{1}{2}}
		\exp\left\{ 
			-\mathbf{i}t\xi(e^{\upgamma}-1)-\frac{t^2}{2}S''(0)
		\right\}
		dt,
	\end{multline}
	where we employed Taylor expansions and approximation of sums by the corresponding 
	Riemann integrals similarly to \Cref{sub:level_1_CLT}
	(the difference by finitely many factors in the numerator does not affect the approximation).
	
	The product over $I_{i,N,K}$ contains only $K-1$ factors, which is finite
	and thus cannot be approximated by the exponent of an integral. 
	We have for $r\le i-1$:
	\begin{equation}
		\label{general_qA_i_proof2}
		1-wq^{-x-r}
		=
		1+\mathbf{i}tN^{-\frac{1}{2}}
		e^{\upgamma\mathsf{u}+\upgamma\xi N^{-\frac{1}{2}}+\frac{r\upgamma}{N}}
		=
		1+\mathbf{i}tN^{-\frac{1}{2}}
		e^{\upgamma\mathsf{u}}\bigl(1+O(N^{-\frac{1}{2}})\bigr).
	\end{equation}
	Similarly, for $r\ge N-K+i+1$:
	\begin{equation}
		\label{general_qA_i_proof3}
		1-wq^{-x-r}
		=
		1+\mathbf{i}tN^{-\frac{1}{2}}
		e^{\upgamma\mathsf{u}+\upgamma\xi N^{-\frac{1}{2}}+\frac{r\upgamma}{N}}
		=
		1+\mathbf{i}tN^{-\frac{1}{2}}
		e^{\upgamma\mathsf{u}}e^{\upgamma}\bigl(1+O(N^{-\frac{1}{2}})\bigr).
	\end{equation}
	Therefore, we have
	\begin{equation}\label{general_qA_i_proof4}
		\prod_{r\in I_{i,N,K}}(1-wq^{-x-r})
		=
		\sum_{l=0}^{K-1}c_{i,l}
		\bigl( \mathbf{i}tN^{-\frac{1}{2}}e^{\upgamma\mathsf{u}} \bigr)^l
		\bigl(1+\epsilon_{i,l,\xi}(N)\bigr),
	\end{equation}
	where $\epsilon_{i,l,\xi}(N)=O(N^{-\frac{1}{2}})$
	comes from the remainders in \eqref{general_qA_i_proof2}, 
	\eqref{general_qA_i_proof3}.
	Here we use a different notation to contrast with the remainders 
	$o(1)$ in \eqref{general_qA_i_proof0}, \eqref{general_qA_i_proof1}
	which are independent of $i$.
	Combining \eqref{general_qA_i_proof0}, \eqref{general_qA_i_proof1}, and \eqref{general_qA_i_proof4},
	we get the claim.
\end{proof}
\Cref{lemma:general_q_Ai_behavior} implies that the determinant 
$\det[ A_{i}(\varkappa_j-j)]$
of \Cref{thm:Pet14} is, up to a prefactor, 
asymptotically equal to a product of two 
simpler determinants, one of the $c_{i,l}$'s,
and the other one involving the $G_l(\xi_j)$'s
(where $\varkappa_j=\mathsf{u}N+\xi_j N^{\frac{1}{2}}$),
as long as both of these simpler determinants are nonzero.
Indeed, under the latter assumption in the expansion of
$\det\left[ \sum_{l=0}^{K-1}c_{i,l}G_l(\xi_j)(1+\epsilon_{i,l,\xi_j}(N)) \right]$
as a sum over permutations the asymptotically dominating terms
do not contain any of the $\epsilon_{i,l,\xi_j}$'s, and 
lead to a product of two nonzero determinants.
Let us now compute these two simpler determinants separately:
\begin{lemma}
	\label{lemma:det_c}
	We have 
	\begin{equation*}
		\Det_{1\le i\le K,\,0\le l\le K-1}
		[c_{i,l}]=
		(1-e^{\upgamma})^{\frac{K(K-1)}{2}}.
	\end{equation*}
\end{lemma}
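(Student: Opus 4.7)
The plan is to evaluate both sides of the identity
\[
  \Det_{1\le i\le K,\,0\le l\le K-1}[c_{i,l}]\cdot \Det_{0\le l\le K-1,\,1\le j\le K}[y_j^l]
  =
  \Det_{1\le i,j\le K}[p_i(y_j)],
\]
where $p_i(y):=(1+y)^{i-1}(1+e^{\upgamma}y)^{K-i}=\sum_{l=0}^{K-1}c_{i,l}y^l$ and $y_1,\ldots,y_K$ are auxiliary parameters. The right-hand factor on the left is a Vandermonde determinant equal to $\prod_{i<j}(y_j-y_i)$. So it suffices to compute $\Det_{i,j}[p_i(y_j)]$ and divide.

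The key move is the M\"obius-type change of variable $u=(1+e^{\upgamma}y)/(1+y)$, which inverts to $y=(u-1)/(e^{\upgamma}-u)$ and gives $1+y=(e^{\upgamma}-1)/(e^{\upgamma}-u)$. Under this change,
\[
  p_i(y)=\left(\frac{e^{\upgamma}-1}{e^{\upgamma}-u}\right)^{K-1}u^{K-i}.
\]
Writing $u_j$ for the image of $y_j$, I would pull out the rows/columns to get
\[
  \Det_{1\le i,j\le K}[p_i(y_j)]
  =(e^{\upgamma}-1)^{K(K-1)}\prod_{j=1}^{K}(e^{\upgamma}-u_j)^{-(K-1)}\Det_{i,j}[u_j^{K-i}],
\]
and the last determinant is a Vandermonde, equal to $(-1)^{K(K-1)/2}\prod_{i<j}(u_j-u_i)$ after reversing row order.

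Next I would divide by $\prod_{i<j}(y_j-y_i)$. A direct computation yields the pleasant identity
\[
  y_j-y_i=\frac{(e^{\upgamma}-1)(u_j-u_i)}{(e^{\upgamma}-u_j)(e^{\upgamma}-u_i)},
\]
so taking the product over $i<j$ gives $\prod_{i<j}(y_j-y_i)=(e^{\upgamma}-1)^{K(K-1)/2}\prod_j(e^{\upgamma}-u_j)^{-(K-1)}\prod_{i<j}(u_j-u_i)$. All $u_j$-dependent factors in the ratio cancel, and one is left with
\[
  \Det[c_{i,l}]=(-1)^{K(K-1)/2}(e^{\upgamma}-1)^{K(K-1)-K(K-1)/2}=(1-e^{\upgamma})^{K(K-1)/2},
\]
which is the claimed formula. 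The only bookkeeping obstacle is sign tracking in the Vandermonde $\Det[u_j^{K-i}]$ (the factor $(-1)^{K(K-1)/2}$ from reversing the row order) and the fact that both the numerator and denominator contain a factor $(e^{\upgamma}-1)$ to a different power and the $(e^{\upgamma}-u_j)$ factors to the same power; I would verify these exponents carefully, but the computation is then immediate and the final answer is independent of the auxiliary parameters $y_j$, as it must be.
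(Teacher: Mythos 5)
Your proof is correct and is essentially the same as the paper's: both divide $\Det_{i,j}[p_i(y_j)]$ by the auxiliary Vandermonde and reduce the numerator to a Vandermonde via the fractional-linear structure of $p_i$. The paper simply factors $(1+e^{\upgamma}y_j)^{K-1}$ out of each column and takes the Vandermonde in the ratio $(1+y_j)/(1+e^{\upgamma}y_j)$ (the reciprocal of your $u_j$), which avoids the explicit inverse substitution and the row-reversal sign, but the computation is the same.
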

\begin{proof}
	Introduce variables $y_1,\ldots,y_K $. 
	Then
	\begin{align*}
		\Det_{1\le i\le K,\,0\le l\le K-1}
		[c_{i,l}]&=
		\frac{1}{\prod_{i>j}(y_i-y_j)}
		\Det_{i,j=1}^{K}\left[ \sum_{l=0}^{K-1}c_{i,l}y_j^{l} \right]
		\\&=
		\frac{1}{\prod_{i>j}(y_i-y_j)}
		\Det_{i,j=1}^{K}\left[ (1+y_j)^{i-1}(1+e^{\upgamma} y_j)^{K-i}\right]
	\end{align*}
	by the very definition of $c_{i,l}$.
	The latter determinant can be rewritten as
	\begin{align*}
		\Det_{i,j=1}^{K}\left[ (1+y_j)^{i-1}(1+e^{\upgamma} y_j)^{K-i}\right]
		&=
		\prod_{j=1}^{K}(1+e^{\upgamma}y_j)^{K-1}
		\Det_{i,j=1}^{K}
		\left[ 
			\left( \frac{1+y_j}{1+e^{\upgamma}y_j} \right)^{i-1}
		\right]
		\\&=
		\prod_{j=1}^{K}(1+e^{\upgamma}y_j)^{K-1}
		\prod_{i<j}\left(
		\frac{1+y_i}{1+e^{\upgamma}y_i}
		-
		\frac{1+y_j}{1+e^{\upgamma}y_j}
		\right)
		\\&=
		(1-e^{\upgamma})^{\frac{K(K-1)}{2}}\prod_{i>j}(y_i-y_j),
	\end{align*}
	as desired.
\end{proof}
Recall the notation 
$\upsigma^2=\frac{S''(0)}{(e^{\upgamma}-1)^2}$, 
cf. \eqref{sigma_square},\eqref{s_double_prime}.
\begin{lemma}
	\label{lemma:det_G}
	Let $\xi_1\ge \ldots\ge \xi_K$, $\xi_j\in\mathbb{R}$. 
	We have
	\begin{equation*}
		\Det_{0\le l\le K-1,\,1\le j\le K}
		\left[ G_l(\xi_j) \right]=
		\frac{G_0(\xi_1)\ldots G_0(\xi_K)}{\upsigma^{K(K-1)}(e^{\upgamma}-1)^{\frac{K(K-1)}{2}}}
		\prod_{i>j}(\xi_i-\xi_j).
	\end{equation*}
\end{lemma}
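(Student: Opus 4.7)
My plan is to identify $G_l$ as (a rescaling of) the $l$th derivative of $G_0$, express $G_0$ as a Gaussian density, and then use the fact that derivatives of a Gaussian density carry Hermite polynomials, to reduce the determinant to a Vandermonde.

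Concretely, I would first note that differentiating $G_0(\xi)$ in $\xi$ under the integral sign brings down a factor of $-\mathbf{i}t(e^{\upgamma}-1)$, so the integral representation yields
\begin{equation*}
G_l(\xi)=\frac{(-1)^l}{(e^{\upgamma}-1)^l}\,G_0^{(l)}(\xi),\qquad l=0,1,2,\ldots.
\end{equation*}
Next, by evaluating the Gaussian integral directly (or by appealing to the $K=1$ computation in \Cref{sub:level_1_CLT}, where this integral already appeared), I would check that
\begin{equation*}
G_0(\xi)=\frac{1}{\upsigma\sqrt{2\pi}}\,e^{-\xi^2/(2\upsigma^2)},
\end{equation*}
using $\upsigma^2=S''(0)/(e^{\upgamma}-1)^2$ from \eqref{sigma_square} and \eqref{s_double_prime}. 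Then, recalling that the probabilist's Hermite polynomials $He_l$ satisfy $\frac{d^l}{dx^l}e^{-x^2/2}=(-1)^l He_l(x)\,e^{-x^2/2}$ and are monic of degree $l$, substitution $x=\xi/\upsigma$ gives
\begin{equation*}
G_0^{(l)}(\xi)=(-1)^l\,\upsigma^{-l}\,He_l(\xi/\upsigma)\,G_0(\xi),
\end{equation*}
and therefore
\begin{equation*}
G_l(\xi)=\frac{He_l(\xi/\upsigma)}{\bigl[(e^{\upgamma}-1)\upsigma\bigr]^l}\,G_0(\xi).
\end{equation*}

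With this formula in hand, the determinant factorizes cleanly. I would pull $G_0(\xi_j)$ out of the $j$th column and $\bigl[(e^{\upgamma}-1)\upsigma\bigr]^{-l}$ out of the $l$th row, collecting the overall scalar
\begin{equation*}
\prod_{j=1}^K G_0(\xi_j)\cdot\prod_{l=0}^{K-1}\bigl[(e^{\upgamma}-1)\upsigma\bigr]^{-l}=\frac{G_0(\xi_1)\cdots G_0(\xi_K)}{(e^{\upgamma}-1)^{K(K-1)/2}\upsigma^{K(K-1)/2}}.
\end{equation*}
The remaining determinant is $\Det[He_l(\xi_j/\upsigma)]_{0\le l\le K-1,\,1\le j\le K}$. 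Since each $He_l$ is monic of degree $l$, row reduction (replacing the $l$th row by itself minus a suitable linear combination of rows $0,\ldots,l-1$) converts it into $\Det[(\xi_j/\upsigma)^l]$ without changing the value. Pulling $\upsigma^{-l}$ out of row $l$ yields another factor of $\upsigma^{-K(K-1)/2}$ and leaves the standard Vandermonde $\Det[\xi_j^l]_{0\le l\le K-1,\,1\le j\le K}=\prod_{i>j}(\xi_i-\xi_j)$.

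Multiplying the three pieces gives $\upsigma^{-K(K-1)/2}\cdot\upsigma^{-K(K-1)/2}=\upsigma^{-K(K-1)}$ in the denominator, reproducing exactly the formula claimed in the lemma. There is no real obstacle here: the only point requiring any care is keeping the $\upsigma$-exponents straight — one factor of $\upsigma^{K(K-1)/2}$ comes from the prefactor $[(e^{\upgamma}-1)\upsigma]^{-l}$ encoding the relation between $G_l$ and $G_0^{(l)}$, and a second identical factor comes from the argument rescaling $\xi/\upsigma$ inside the Hermite polynomials, the two combining into the $\upsigma^{K(K-1)}$ in the denominator.
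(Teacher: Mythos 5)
Your proof is correct and follows essentially the same route as the paper: both reduce the determinant to a Vandermonde by showing that $G_l(\xi)/G_0(\xi)$ is a polynomial of degree $l$ with leading coefficient $(e^{\upgamma}-1)^{-l}\upsigma^{-2l}$. The only difference is how that fact is established — you differentiate under the integral sign and invoke the Hermite Rodrigues formula, while the paper derives a three-term recursion for the $G_l$'s by integration by parts (and relegates the Hermite connection to a footnote); the bookkeeping of the $\upsigma$-exponents in your version checks out.
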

\begin{proof}
	By evaluating the Gaussian integral, we have 
	\begin{equation*}
		G_0(\xi)=\frac{1}{\sqrt{2\pi\upsigma^2}}e^{-\frac{\xi^2}{2\upsigma^2}}.
	\end{equation*}
	Moreover, by integrating by parts one can readily check that
	\begin{equation*}
		G_l(\xi)=\frac{\xi}{\upsigma^2(e^{\upgamma}-1)}G_{l-1}(\xi)-\frac{l-1}{\upsigma^2(e^{\upgamma}-1)^2}G_{l-2}(\xi),
	\end{equation*}
	which implies that $G_l(\xi)/G_0(\xi)$ is a 
	polynomial\footnote{%
		In fact, $G_l(\xi)$, $l=0,1,\ldots$, can be 
		expressed through the classical Hermite orthogonal polynomials, 
		but we do not need this precise form of the $G_l$'s.%
	}
	of degree $l$ in $\xi$ with leading coefficient equal to
	$(e^{\upgamma}-1)^{-l}\upsigma^{-2l}$.
	This implies the claim.
\end{proof}
\Cref{lemma:det_c,lemma:det_G} show that both 
determinants whose product enters the determinant of $A_i(\varkappa_j-j)$
in \eqref{formula_Pet14} are indeed nonzero
as long as the $\xi_j$'s are distinct. 
When some of the $\xi_j$'s are equal to each other, the GUE eigenvalue density also vanishes, 
and one can readily check that the scaling limit of \eqref{formula_Pet14} is zero, as it should be.
Therefore, we arrive at the following result:
\begin{proposition}(Level $K$ CLT)
	\label{prop:level_K_CLT}
	Under Assumptions \ref{ass:function} and \ref{ass:q},
	as $N\to+\infty$, for the locations 
	$\lambda^K_j=\lambda^K_j(N)$, $j=1,\ldots,K $,
	of the vertical lozenges at level $K$ we have the following convergence in distribution
	on $\mathbb{R}^K$:
	\begin{equation*}
		\biggl\{ 
			\frac{\lambda^K_j-N\mathsf{u}}{\sqrt N}
		\biggr\}_{j=1}^{K}
		\to 
		\left\{
			\mathsf{L}^K_j
		\right\}_{j=1}^{K}
		\sim \mathsf{GUE}_K(\upsigma^2).
	\end{equation*}
	Here $\mathsf{u}$ and $\upsigma^2$ are given by \eqref{u_intro} and \eqref{sigma_square}, respectively,
	and $\mathsf{GUE}_K$ is the eigenvalue distribution of the $K\times K$
	Gaussian Unitary Ensemble (cf. \Cref{def:GUE}).
\end{proposition}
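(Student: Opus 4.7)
The plan is to start from the exact formula \eqref{formula_Pet14} of \Cref{thm:Pet14}, substitute $\varkappa_j=\mathsf{u}N+\xi_j\sqrt N$, and combine it with the asymptotic results collected earlier in this section. \Cref{lemma:prefactor_behavior} handles the Schur/volume prefactor, while \Cref{lemma:general_q_Ai_behavior} replaces each entry $A_i(\varkappa_j-j)$ of the $K\times K$ determinant by an explicit asymptotic expansion. The central algebraic observation is that this expansion has the product form
\begin{equation*}
A_i(\varkappa_j-j)\sim C_j(N)\sum_{l=0}^{K-1}c_{i,l}\,\widetilde G_l(\xi_j),
\qquad \widetilde G_l(\xi_j):=N^{-l/2}e^{\upgamma\mathsf{u}l}G_l(\xi_j),
\end{equation*}
in which $C_j(N)$ is an $i$-independent scalar absorbing all exponential-in-$N$ factors (including the prefactor from Lemma \ref{lemma:general_q_Ai_behavior}). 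Up to the error terms $\epsilon_{i,l,\xi_j}(N)=o(1)$, the $K\times K$ matrix is therefore the product of the matrix $(c_{i,l})$ and the matrix $(\widetilde G_l(\xi_j))$, so its determinant factors as $\prod_j C_j(N)\cdot\det[c_{i,l}]\cdot\det[\widetilde G_l(\xi_j)]$.

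Next I would evaluate the two factor-determinants using \Cref{lemma:det_c,lemma:det_G}. The first gives $(1-e^{\upgamma})^{K(K-1)/2}$; the second gives a Vandermonde $\prod_{i>j}(\xi_i-\xi_j)$, a product of Gaussians $\prod_j G_0(\xi_j)=\prod_j\tfrac{1}{\sqrt{2\pi\upsigma^2}}e^{-\xi_j^2/(2\upsigma^2)}$, and compensating factors $\upsigma^{-K(K-1)}(e^{\upgamma}-1)^{-K(K-1)/2}\prod_{l=0}^{K-1}N^{-l/2}$. Multiplying these by the Schur prefactor asymptotics from \Cref{lemma:prefactor_behavior} (which supplies a second Vandermonde $\prod_{i<j}(\xi_i-\xi_j)/(j-i)$ along with the power $N^{K(K-1)/4}$) and by the Jacobian $N^{K/2}$ converting the discrete probability mass function in $\varkappa$ to a density in $\xi$, one checks that all exponential-in-$N$ factors cancel identically, that the powers of $N$ combine to $N^{0}$, that the two Vandermondes fuse into the squared Vandermonde $\prod_{i<j}(\xi_i-\xi_j)^2$, and that the remaining numerical constant is precisely $1/\bigl(0!1!\cdots(K-1)!\,\upsigma^{K(K-1)}\bigr)$. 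Matching the result term-by-term to the GUE density \eqref{GUE_joint_density} with variance $\upsigma^2$ yields the claim.

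I expect the main obstacle to be careful bookkeeping rather than a conceptual difficulty: the powers of $N$ and the exponential factors enter from three separate sources (\Cref{lemma:prefactor_behavior,lemma:general_q_Ai_behavior}, and the rescaling) and must be shown to cancel exactly. A secondary technical subtlety is that the error terms $\epsilon_{i,l,\xi_j}(N)$ in \Cref{lemma:general_q_Ai_behavior} depend on all three indices, so the multiplicative factorization of the determinant is only approximate; the leading asymptotics must be justified by expanding via the Leibniz formula and noting that, whenever the $\xi_j$'s are pairwise distinct, both $\det[c_{i,l}]$ and $\det[\widetilde G_l(\xi_j)]$ are nonzero (by \Cref{lemma:det_c,lemma:det_G}), so the $\epsilon$-free term dominates. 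The degenerate case of coinciding $\xi_j$'s is handled trivially: the GUE density vanishes by the Vandermonde factor, and the prelimit probability of an exactly coinciding configuration is zero since the $\lambda^K_j-j$ are strictly decreasing; hence convergence in distribution on $\mathbb{R}^K$ is unaffected.
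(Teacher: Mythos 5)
Your proposal is correct and follows essentially the same route as the paper: the same prefactor asymptotics, the same entrywise expansion of $A_i(\varkappa_j-j)$, the same factorization of the determinant into $\det[c_{i,l}]\cdot\det[\widetilde G_l(\xi_j)]$ justified by the Leibniz expansion and the nonvanishing of both factors, and the same treatment of coinciding $\xi_j$'s. The only bookkeeping item you leave implicit is the contribution of the shifts $-j$ in the arguments $\varkappa_j-j$ (an extra $i$-independent factor $\prod_j q^{(N-K)j}$), but this is absorbed into your scalars $C_j(N)$ exactly as you set them up.
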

\begin{proof}
	From \Cref{thm:Pet14} and \Cref{lemma:det_G,lemma:det_c,lemma:general_q_Ai_behavior,lemma:prefactor_behavior}
	we have after simplifications
	(%
		here, as before, $\varkappa_j=\mathsf{u}N+\xi_jN^{\frac{1}{2}}$ for
		$j=1,\ldots, K$%
	):
	\begin{multline}
		\label{level_K_CLT_proof}
		P^{N}_{q,\nu}\left( \lambda^K_j=\varkappa_j,\;j=1,\ldots,K  \right)
		=
		\bigl(1+o(1)\bigr)
		\frac{N^{-\frac{K}{2}}}{0!1!\ldots(K-1)!\upsigma^{K(K-1)}}
		\\\times
		G_0(\xi_1)\ldots G_0(\xi_K)
		\prod_{1\le i<j\le K}
		(\xi_i-\xi_j)^2.
	\end{multline}
	Here we also used the fact that the $K\times K$
	determinant in \eqref{thm:Pet14} has the form $\det [A_i(\varkappa_j-j)]$ 
	with the shifts in the $\varkappa_j$'s. 
	It can be seen from \eqref{proposition:qA_i_best} that 
	these shifts lead to an extra prefactor 
	$\prod_{j=1}^{K}q^{(N-K)j}=e^{-\frac{1}{2}\upgamma K(K+1)}(1+o(1))$.

	Because the prefactor $N^{-\frac{K}{2}}$ corresponds
	to the rescaling of the space from discrete to continuous, 
	we see that 
	the right-hand side 
	of \eqref{level_K_CLT_proof} 
	leads 
	to the eigenvalue density $\mathsf{GUE}_K(\upsigma^2)$.
	This completes the proof.
\end{proof}

\subsection{Gibbs property}
\label{sub:Gibbs}

The last step required to complete the proof of
\Cref{thm:main_result} is to show that the joint distribution
of all $K(K+1)/2$ locations $\boldsymbol\lambda=\{\lambda^{k}_{j}\colon k=1,\ldots,K,\,j=1,\ldots,k  \}$
of the vertical lozenges 
$\begin{tikzpicture}
	[scale=.17,very thick]
	\def\rt{0.866025}
	\lozv{(0,0)}{white}
\end{tikzpicture}$
at levels $1,\ldots, K$
is approximated by the GUE corners 
process $\mathbf{L}=\{ \mathsf{L}^{k}_{j}\}\sim
\mathsf{GUE}_{K\times K}(\upsigma^2)$
(see \Cref{def:GUE} for the latter).
We employ the Gibbs property of 
$\mathsf{GUE}_{K\times K}(\upsigma^2)$:
conditioned on the fixed $K$th row $\mathsf{L}^K$,
the 
distribution of the rest of the array
$\{\mathsf{L}^{k}_{j} \colon 1\le k\le K-1,\, 1\le j\le k \}$
is uniform over the polytope\footnote{%
	Often referred to as the Gelfand--Tsetlin polytope. 
	Note that it depends 
	on the fixed top row $\mathsf{L}^K$.%
}
described by the interlacing conditions
$\mathsf{L}^{k}_{j}\le\mathsf{L}^{k-1}_{j-1}\le\mathsf{L}^{k}_{j-1}$ 
for all 
$k=1,\ldots,K$
and 
$j=2,\ldots,k$.

At the same time, the pre-limit joint distribution
of the locations of the vertical lozenges
$\boldsymbol\lambda=\{\lambda^{k}_{j}\}$
satisfies a $q$-deformation of the Gibbs property.
Under the latter, 
the uniform conditional probabilities of 
the rows $\lambda^1,\ldots,\lambda^{K-1}$ conditioned on the fixed row $\lambda^K$
are replaced by the probabilities proportional to $q^{\mathsf{vol}}$.
Let us explain why in our limit regime 
the $q$-Gibbs property leads to the Gibbs one.

In \Cref{prop:level_K_CLT} we showed that the $K$th row
$\lambda^K$
of the discrete interlacing array corresponding to the vertical lozenges
in our $q^{\mathsf{vol}}$-distributed lozenge tiling 
converges after rescaling to the $K$th row $\mathsf{L}^K$
of the GUE corners process.
This rescaling in particular implies that the distances between 
the $\lambda^K_j$'s are of order $\sqrt N$.
Therefore, conditioned on the fixed configuration
$\lambda^K$ on the $K$th row 
with $\lambda^K_1-\lambda^K_K$ of order $\sqrt N$,
the difference between the maximal and the minimal volume of the 
tiling is also of order $\sqrt N$.
Thus, since $q=e^{-\upgamma/N}$, we have $q^{\max(\mathsf{vol})}=(1+o(1))q^{\min(\mathsf{vol})}$.
This means that the conditional distribution of the lower rows
$\lambda^1,\ldots,\lambda^{K-1} $
becomes uniform (up to the interlacing constraints) in the $N\to+\infty$ limit. 
This immediately leads to the ICLT and thus completes the proof of \Cref{thm:main_result}.

\printbibliography
\end{document}